\title{\bf Intersecting Families of Perfect Matchings}
\author{Nathan Lindzey\\
\small Department of Combinatorics and Optimization\\[-0.8ex]
\small University of Waterloo\\[-0.8ex] 
\small Waterloo, ON. Canada\\
\small\tt nlindzey@uwaterloo.ca
}
\begin{document}
\maketitle

\begin{abstract}
A family of perfect matchings of $K_{2n}$ is \emph{t-intersecting} if any two members share $t$ or more edges.  
We prove for any $t \in \mathbb{N}$ that every $t$-intersecting family of perfect matchings has size no greater than $(2(n-t) - 1)!!$ for sufficiently large $n$, and that equality holds if and only if the family is composed of all perfect matchings that contain some fixed set of $t$ disjoint edges. This is an asymptotic version of a conjecture of Godsil and Meagher that may be viewed as the non-bipartite analogue of the \emph{Deza-Frankl conjecture} proven by Ellis, Friedgut, and Pilpel.
\end{abstract}

\section{Introduction}

\noindent Let $\mathcal{M}_{2n}$ be the collection of all perfect matchings of $K_{2n} = (V,E)$, the complete graph on an even number of vertices.  In this work, we investigate families of perfect matchings $\mathcal{F} \subseteq \mathcal{M}_{2n}$ that are \emph{t-intersecting}, that is, $|m \cap m'| \geq t$ for all $m,m' \in \mathcal{F}$.  In particular, we seek a characterization of the largest $t$-intersecting families of perfect matchings of $K_{2n}$.  The first candidates that come to mind are those families whose members all share a fixed set $T \subseteq E(K_{2n})$ of $t$ disjoint edges: 
\[ \mathcal{F}_T := \{ m \in \mathcal{M}_{2n} : T \subseteq m\}.\]
Such a family will be called \emph{canonically t-intersecting} for $t \geq 2$ and \emph{canonically intersecting} for $t=1$.  It is well-known that the canonically intersecting families are the largest intersecting families of perfect matchings of $K_{2n}$.
\begin{theorem}\label{thm:ekr}
\emph{\cite{GodsilM15, Lindzey17, MeagherM05}} If $\mathcal{F} \subseteq \mathcal{M}_{2n}$ is an intersecting family, then 
\[|\mathcal{F}|  \leq (2(n-1) - 1)!!.\] 
Moreover, equality holds if and only if $\mathcal{F}$ is a canonically intersecting family.
\end{theorem}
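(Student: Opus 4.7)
The plan is to prove Theorem~\ref{thm:ekr} by applying the Hoffman (ratio) bound to the \emph{perfect matching derangement graph} $\Gamma_n$ whose vertex set is $\mathcal{M}_{2n}$ and in which two perfect matchings are adjacent if and only if they are edge-disjoint. An intersecting family is then precisely an independent set of $\Gamma_n$, so it suffices to show $\alpha(\Gamma_n) = (2(n-1)-1)!!$ and to characterize the tight independent sets.

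First I would observe that $\mathcal{M}_{2n}$ is in bijection with the coset space $S_{2n}/H_n$, where $H_n = S_2 \wr S_n$ is the hyperoctahedral group stabilizing a fixed perfect matching. Since $\Gamma_n$ arises from a union of $H_n$-double cosets of $S_{2n}$, its adjacency matrix lies in the commutant of the permutation representation $\mathbb{C}[\mathcal{M}_{2n}]$. The pair $(S_{2n}, H_n)$ is a well-known Gelfand pair, and the permutation module decomposes multiplicity-freely as $\bigoplus_{\lambda \vdash n} S^{2\lambda}$, where $2\lambda = (2\lambda_1, 2\lambda_2, \ldots)$. Hence the eigenvalues of $\Gamma_n$ are indexed by partitions $\lambda \vdash n$, and each can in principle be computed via zonal spherical functions (evaluated on a representative derangement-type double coset).

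Next I would determine the spectrum explicitly, at least identifying the least eigenvalue. The degree of $\Gamma_n$ is the number $D_n$ of perfect matchings of $K_{2n}$ sharing no edge with a fixed one, which satisfies a classical recursion. A candidate computation (using the branching/Pieri rule for the doubled partitions, or equivalently the known formula for the zonal polynomials at the relevant double coset) should show that the eigenvalue associated with $2\lambda$ for $\lambda = (n-1,1)$ equals $-D_n/(2n-1)$. Plugging this into the Hoffman bound
\[ \alpha(\Gamma_n) \leq \frac{|\mathcal{M}_{2n}|\,(-\tau)}{d-\tau},\]
with $d = D_n$ and $\tau = -D_n/(2n-1)$, yields $\alpha(\Gamma_n) \leq (2n-1)!!/(2n-1) = (2(n-1)-1)!!$, matching the canonical construction and therefore proving tightness.

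For the characterization of equality, I would invoke the standard consequence of tightness in the Hoffman bound: the characteristic vector $\mathbf{1}_{\mathcal{F}}$ of any extremal intersecting family must lie in $S^{(2n)} \oplus S^{(2n-2,2)}$, i.e.\ the sum of the trivial module and the eigenspace for the least eigenvalue. I would then argue that the canonical families $\mathcal{F}_{\{e\}}$, as $e$ ranges over edges, already span this subspace among $\{0,1\}$-vectors: any 0-1 vector in this space that has the prescribed inner products with $\mathbf{1}$ and with the canonical characteristic vectors must itself be $\mathbf{1}_{\mathcal{F}_{\{e\}}}$ for some edge $e$. The main obstacle I anticipate is the explicit computation of the least eigenvalue (and verifying that $2\lambda = (2n-2,2)$ really achieves it, rather than some other doubled partition); this typically requires either a careful manipulation of zonal spherical functions for the Gelfand pair $(S_{2n}, H_n)$ or an inductive/combinatorial computation of $D_n$ together with a sign argument. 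The uniqueness step is also delicate and would likely lean on module-theoretic rigidity of the $S^{(2n-2,2)}$ isotypic component rather than a direct combinatorial census.
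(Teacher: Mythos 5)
Your outline follows the same strategy as the cited references and as the paper's generalization to $t$-intersecting families: interpret an intersecting family as an independent set in the perfect matching derangement graph, use the Gelfand pair $(S_{2n}, H_n)$ and the multiplicity-free decomposition $\mathbb{R}[\mathcal{M}_{2n}] \cong \bigoplus_{\lambda \vdash n} S^{2\lambda}$ to organize the spectrum, apply the ratio bound, and then characterize tightness via the $S^{(2n)} \oplus S^{(2n-2,2)}$ module. (Note that the paper itself does not reprove Theorem~\ref{thm:ekr}; it cites it, and its own machinery recovers only the sufficiently-large-$n$ version via the general pseudo-adjacency construction.)

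However, there is a concrete arithmetic error in your eigenvalue claim that also breaks your final computation. You assert $\tau = \eta_{\min} = -D_n/(2n-1)$ and then conclude $\alpha(\Gamma_n) \leq (2n-1)!!/(2n-1)$, but with that value of $\tau$ the Hoffman bound gives
\[
|\mathcal{M}_{2n}|\cdot \frac{-\tau}{d-\tau} = (2n-1)!! \cdot \frac{1/(2n-1)}{1 + 1/(2n-1)} = \frac{(2n-1)!!}{2n},
\]
which is not $(2(n-1)-1)!!$. The correct least eigenvalue for the eigenspace $S^{(2n-2,2)}$ is $\tau = -D_n/(2n-2)$, giving $-\tau/(d-\tau) = 1/(2n-1)$ and hence the right bound. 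This is precisely what the paper encodes with $\zeta = -1/((\!(2n-1)\!)_1 - 1) = -1/(2n-2)$ when $t=1$. You would still need, as you note, to verify that no other $S^{2\mu}$ attains a more negative eigenvalue; this is nontrivial and is the content of the cited references.

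Beyond this, the uniqueness step is genuinely underdeveloped, not merely delicate. Showing that every $\{0,1\}$ vector in $S^{(2n)} \oplus S^{(2n-2,2)}$ with the right normalization is the characteristic vector of a canonical family is the hardest part of the known proofs (it is where most of the effort in the cited works is spent), and the assertion that the canonical characteristic vectors ``already span this subspace among $\{0,1\}$-vectors'' needs a real argument: one must rule out non-canonical Boolean combinations sitting in the same isotypic component, which requires either a cross-intersecting/stability argument or careful manipulation of the module structure. As written, your proposal identifies the right target but does not supply the reasoning.
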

\noindent In their recent monograph on Erd\H{o}s-Ko-Rado combinatorics, Godsil and Meagher conjecture that a $t$-intersecting version of Theorem~\ref{thm:ekr} is true~\cite[Conjecture 16.14.2]{GodsilMeagher}. 
The main result of this work is that the following asymptotic version of their conjecture holds.  
\begin{theorem}[Main Result]\label{thm:tekr}
For any $t \in \mathbb{N}$, if $\mathcal{F} \subseteq \mathcal{M}_{2n}$ is a $t$-intersecting family, then 
\[|\mathcal{F}|  \leq (2(n-t) - 1)!!\] 
for sufficiently large $n$ depending on $t$. Moreover, equality holds if and only if $\mathcal{F}$ is a canonically $t$-intersecting family. 
\end{theorem}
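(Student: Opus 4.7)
The plan is to adapt the eigenvalue (Hoffman) method employed by Ellis, Friedgut, and Pilpel for the Deza-Frankl conjecture to the perfect matching setting. The natural ambient is the Gelfand pair $(S_{2n}, H_n)$ with $H_n = S_2 \wr S_n$, since $\mathcal{M}_{2n} \cong S_{2n}/H_n$. The permutation module $\mathbb{C}[\mathcal{M}_{2n}]$ decomposes multiplicity-freely as $\bigoplus_{\lambda \vdash n} V_{2\lambda}$, and the corresponding Bose-Mesner algebra has Schur idempotents $A_\mu$ indexed by the cycle type $\mu \vdash n$ of $m \cup m'$, with eigenvalues on $V_{2\lambda}$ given by the zonal spherical functions $\omega^\lambda(\mu)$. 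A key first observation is that for every $t$-matching $T$, the characteristic vector of $\mathcal{F}_T$ lies in the ``top'' subspace $U_t := \bigoplus_{\lambda_1 \geq n-t} V_{2\lambda}$.

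For the upper bound, I would construct a pseudo-adjacency matrix $M = \sum_\mu c_\mu A_\mu$ in the Bose-Mesner algebra supported only on classes corresponding to $|m \cap m'| < t$, with eigenvalues $\theta_\lambda := \sum_\mu c_\mu \omega^\lambda(\mu)$ satisfying: (i) $\theta_{(n)} = d > 0$ on the trivial component; (ii) $\theta_\lambda = \theta_{\min}$ for every $\lambda \ne (n)$ with $\lambda_1 \geq n-t$, where $\theta_{\min} < 0$ is calibrated so that $(2n-1)!!/(1 + d/|\theta_{\min}|) = (2(n-t)-1)!!$; and (iii) $\theta_\lambda > \theta_{\min}$ for every $\lambda$ with $\lambda_1 < n-t$, once $n$ is large enough. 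The weighted Hoffman bound then yields $|\mathcal{F}| \leq (2(n-t)-1)!!$. I would take $M$ to be a small, explicit linear combination of ``low-order'' classes in the spirit of the EFP pseudo-adjacency matrix; the main obstacle is verifying condition (iii), which reduces to sharp asymptotic estimates on $\omega^\lambda(\mu)$ for cycle types $\mu$ close to $(1^n)$. These could be obtained via the Jucys-Murphy / Farahat-Higman calculus for $(S_{2n}, H_n)$, or by transferring Rasala-Roichman-type $S_n$-character bounds through the standard identification of $\omega^\lambda$ with a normalized character sum.

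For the extremal characterization, tightness in the Hoffman bound forces the characteristic vector $v_\mathcal{F}$ to lie in $\mathbb{R}\mathbf{1} \oplus U_t$. The remaining task is a structural theorem: every $0/1$-valued vector in this subspace with average $(2(n-t)-1)!!/(2n-1)!!$ is the indicator of some $\mathcal{F}_T$. I would prove this by induction on $t$, taking Theorem~\ref{thm:ekr} as the base case and using hypercontractivity on $\mathcal{M}_{2n}$, lifted from the analogous inequality on $S_{2n}$ via the quotient map, to show that any such $\mathcal{F}$ is essentially determined by a single $t$-matching shared by almost all its members. A short bootstrapping argument then upgrades ``almost all'' to ``all''. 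I expect this junta-type step, rather than the eigenvalue estimate, to be the principal technical obstacle and the main source of the ``sufficiently large $n$'' hypothesis, in parallel with EFP.
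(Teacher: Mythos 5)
Your overall skeleton matches the paper: pass to the Gelfand pair $(S_{2n}, H_n)$, use the multiplicity-free decomposition $\mathbb{C}[\mathcal{M}_{2n}] \cong \bigoplus_{\lambda \vdash n} V_{2\lambda}$, show the indicators $1_{\mathcal{F}_T}$ live in the fat subspace $U_t$, build a weighted pseudo-adjacency matrix whose non-trivial fat eigenvalues all equal a calibrated $\theta_{\min}$, and then push the equality analysis to a junta/stability argument. But you diverge from the paper in two places, and in both you reach for machinery that the paper deliberately circumvents.

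For your condition (iii)---keeping the non-fat eigenvalues above $\theta_{\min}$---you propose sharp asymptotics on $\omega^\lambda(\mu)$ via Jucys--Murphy/Farahat--Higman calculus for $(S_{2n},H_n)$ or transferring Rasala--Roichman-type character bounds. The paper does something much more elementary and robust: since $\eta_\lambda = \langle \omega^\lambda, 1_{\Omega_\Lambda} \rangle$, Cauchy--Schwarz together with the orthogonality relation $\|\omega^\lambda\|^2 = |\mathcal{M}_{2n}|/\dim 2\lambda$ gives $|\eta_\lambda| \leq \sqrt{|\mathcal{M}_{2n}||\Omega_\Lambda|/\dim 2\lambda}$, which only needs dimension estimates (a lower bound for non-fat $2\lambda$, an upper bound via Diaconis) and a crude sphere-size bound, not any control of $\omega^\lambda(\mu)$ itself. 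Similarly, for constructing the coefficients $c_\mu$ and bounding them, the paper works through the $LU$-decomposition of the zonal character table as a product $M(p,m)\cdot M(m,Z')$ and uses stability of the leading principal minors of $D(n)$ and of $K^{(2)}(n)$, rather than ``guessing'' a small explicit linear combination. This is worth noting because for $\mathcal{M}_{2n}$ the spherical functions lack Murnaghan--Nakayama or Jacobi--Trudi-style formulas (the paper says so explicitly), so the tools you'd want for a direct Rasala--Roichman transfer are not available; the Cauchy--Schwarz shortcut is precisely how one avoids needing them.

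For the extremal characterization, you propose hypercontractivity on $\mathcal{M}_{2n}$ lifted from $S_{2n}$ plus a junta argument, mirroring EFP. The paper instead proves a stability theorem via a ``key lemma'' and never invokes hypercontractivity: it uses the stability version of the ratio bound to show $1_\mathcal{F}$ is close to $U_t$, then an isoperimetric inequality for the perfect-matching transposition graph $\Upsilon_n$ to find $m_0 \notin \mathcal{F}$, $m_1 \in \mathcal{F}$ at graph distance $O(\sqrt{n\log n})$ with projections $P_{m_0}\approx 0$, $P_{m_1}\approx 1$, and then an explicit combinatorial unwinding of the projection $P_{U_t}$ via polytabloids and zonal spherical functions to extract a $t$-set of edges with a large restriction. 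After that, a cross-intersecting estimate and a bootstrap/induction on $t$ close the argument, as in Ellis. Your hypercontractivity route is not obviously unsound, but lifting the noise operator from $S_{2n}$ to the coset space requires bi-$H_n$-invariance to descend, and the analysis of the resulting Boolean-function structure on $\mathcal{M}_{2n}$ would run into exactly the character-combinatorics gap mentioned above; the isoperimetric-plus-polytabloid route sidesteps both issues. In short: your plan is directionally correct and identifies the same pivotal spaces and calibrations, but the two technical ingredients you flag as hard are ones the paper replaces with more elementary tools tailored to the limited combinatorial handle one has on $(S_{2n},H_n)$.
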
 
\noindent Our proof is similar in spirit to a few algebraic proofs of $t$-intersecting Erd\H{o}s-Ko-Rado results, including the somewhat recent proof of Deza and Frankl's conjecture on \emph{$t$-intersecting families of permutations}, equivalently, perfect matchings of the bipartite graph $K_{n,n}$.
\begin{theorem}\emph{\cite{EllisFP11,Ellis11}}\label{thm:perm}
For any $t \in \mathbb{N}$, if $\mathcal{F}$ is a $t$-intersecting family of perfect matchings of $K_{n,n}$, then
\[|\mathcal{F}|  \leq (n-t)!.\] 
for sufficiently large $n$ depending on $t$.
Moreover, equality holds if and only if $\mathcal{F}$ is a canonically t-intersecting family.
\end{theorem}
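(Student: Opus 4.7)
The plan is to recast the problem in the language of the symmetric group $S_n$: a perfect matching of $K_{n,n}$ is a permutation $\pi \in S_n$, and two matchings share at least $t$ edges iff $\pi^{-1}\sigma$ has at least $t$ fixed points. Hence a $t$-intersecting family $\mathcal{F}$ is an independent set in the Cayley graph $\Gamma_t$ on $S_n$ whose connection set $D_t$ consists of permutations with fewer than $t$ fixed points. Canonical $t$-intersecting families are cosets of the stabilizers of $t$-element partial matchings, equivalently indicator functions whose Fourier support lies on irreducible representations $S^\lambda$ with $\lambda_1 \geq n-t$.

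The core of the proof is a weighted Hoffman (ratio) bound on $\Gamma_t$. Because $\Gamma_t$ is a normal Cayley graph, each eigenspace is a direct sum of isotypic components $U_\lambda$, with eigenvalue $\eta_\lambda = \frac{1}{\dim S^\lambda}\sum_{\pi \in D_t}\chi^\lambda(\pi)$ expressible via character sums over derangement-like conjugacy classes. Following the Ellis--Friedgut--Pilpel strategy, I would construct a nonnegative weighting $w\colon D_t \to \mathbb{R}_{\geq 0}$ (a pseudo-adjacency matrix in the Bose--Mesner algebra of the conjugacy-class scheme) so that: (i) the all-ones eigenvalue equals $1$; (ii) the second-largest eigenvalue in absolute value equals $-(n-t)!/(n! - (n-t)!)$ exactly; and (iii) this extremal negative eigenvalue is attained \emph{precisely} on the span of the $U_\lambda$ with $\lambda_1 \geq n-t$. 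Such a weighting can be found by solving a linear program over the class algebra; its feasibility and the matching eigenvalue computation, carried out using the Murnaghan--Nakayama rule and careful estimates on character ratios $|\chi^\lambda(\pi)/\dim S^\lambda|$ for $\lambda_1 < n-t$, is the main technical obstacle and the source of the ``sufficiently large $n$'' hypothesis. Given such $w$, Hoffman's bound yields $|\mathcal{F}|/n! \leq (n-t)!/n!$.

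For the equality case, a standard consequence of the tight Hoffman bound is that the characteristic function $\mathbf{1}_{\mathcal{F}}$ lies in the direct sum of the top eigenspace and the all-ones eigenspace, i.e.\ its Fourier transform is supported on $\{\lambda : \lambda_1 \geq n-t\}$. I would then invoke the ``low-degree Boolean function is a $t$-junta'' type structure theorem of Ellis--Friedgut--Pilpel (proved via hypercontractivity and spectral concentration on $S_n$) to conclude that any $\{0,1\}$-valued function with this Fourier support is, for $n$ large, a disjoint union of indicators of canonically $t$-intersecting families; combined with the tight count $(n-t)!$ and $t$-intersection, exactly one such indicator can appear, so $\mathcal{F} = \mathcal{F}_T$ for some $t$-matching $T$.

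The main obstacle is step (iii) above: ensuring the weighting pushes \emph{all} representations with $\lambda_1 < n-t$ strictly above the Hoffman threshold. Controlling character ratios uniformly over partitions of depth more than $t$ is delicate, and the argument only closes once $n$ is large compared to $t$, matching the hypothesis of the theorem. The uniqueness step, while technically substantial, is by now a standardized application of the EFP Fourier-analytic toolkit on $S_n$ once the spectral gap is established with the correct top eigenspace.
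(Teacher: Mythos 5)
Note first that the paper does not prove Theorem~\ref{thm:perm} at all: it is quoted from Ellis--Friedgut--Pilpel and Ellis as the bipartite prototype of the main result, so there is no ``paper proof'' to match against; the fair comparison is with the cited argument and with the paper's own non-bipartite adaptation of it. Measured that way, your outline is essentially the right route and the same one the paper emulates: reformulate as an independent set in the normal Cayley graph on $S_n$ generated by permutations with fewer than $t$ fixed points, build a weighted (pseudo-)adjacency matrix in the class algebra whose least eigenvalue is exactly $-(n-t)!/(n!-(n-t)!)$ on the isotypic components with $\lambda_1\geq n-t$ (excluding the trivial one, a small slip in your item (iii)), apply the ratio bound, and for equality use the fact that $1_{\mathcal F}$ then lies in the span of the fat components together with the structure theorem that Boolean functions with such Fourier support are disjoint unions of $t$-cosets. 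Two caveats are worth recording. First, the construction in the cited proof (and in Section~\ref{sec:pseudo} here) is not a feasibility LP: one solves a square linear system whose matrix is a leading principal minor of the character table, with existence and boundedness coming from invertibility and stability of that minor; your ``feasibility of an LP'' hides exactly this point. Second, your plan to control all $\lambda$ with $\lambda_1<n-t$ by estimates on $|\chi^\lambda(\pi)/\dim S^\lambda|$ cannot work uniformly: for tall partitions such as $(1^n)$ the ratio is identically $1$, so smallness must come from cancellation in the weighted sum (a separate argument in Ellis--Friedgut--Pilpel), a wrinkle the present paper escapes only because even partitions are never tall; also the structure theorem for Boolean functions supported on fat partitions is not a hypercontractivity argument, and the paper's own uniqueness proof deliberately avoids it, substituting a stability/isoperimetry argument (the Key Lemma) because zonal characters are far less tractable than ordinary ones. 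With those corrections your sketch faithfully reflects the cited proof and its relationship to the matching case.
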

\noindent One may interpret our main result as the non-bipartite analogue of Theorem~\ref{thm:perm}. Viewing it as such, the most significant point of departure from the bipartite case is the absence of group structure on $\mathcal{M}_{2n}$, a sobering fact that will lead us through the theory of \emph{association schemes}, \emph{Gelfand pairs}, and \emph{symmetric functions}.

Before we begin this adventure, we first cover some preliminary material needed in order to map out the first part of our main result.

%

\section{Combinatorial Preliminaries and a Proof Sketch}

It is easy to see that perfect matchings are in one-to-one correspondence with partitions of $[2n] := \{1,2,\cdots,2n\}$ into parts of size two, so we may write any perfect matching $m \in \mathcal{M}_{2n}$ as a partition 
$$m = m_1~m_2|m_3~m_4|\cdots|m_{2n-1}~m_{2n} \text{ such that } m_i \in [2n].$$ 
Let $m^* := 1~2|3~4|\cdots |2n$-$1~2n$ be the \emph{identity perfect matching}. The \emph{symmetric group} $S_{2n}$ on $2n$ symbols acts transitively on $\mathcal{M}_{2n}$ under the following action:
\[ \sigma m = \sigma(m_1)~\sigma(m_2)~|~\sigma(m_3)~\sigma(m_4)~|~\cdots~|~\sigma(m_{2n-1})~\sigma(m_{2n}).\]
The \emph{hyperoctahedral group} $H_n := S_2 \wr S_n$ has order $(2n)!! := 2^nn!$ and is isomorphic to the stabilizer of $m^*$. Since perfect matchings are in one-to-one correspondence with cosets of the quotient $S_{2n}/H_n$, we see that 
$$|\mathcal{M}_{2n}| = (2n-1)!! := 1 \times 3 \times 5 \times \cdots \times (2n-3) \times (2n - 1).$$  
The following proposition can be shown using Stirling's formula.
\begin{proposition}\label{prop:sqrt}
\emph{\cite{Bauer07}} For all $n \in \mathbb{N}$, we have $(2n-1)!! < (2n)!!/\sqrt{\pi n}$. 
\end{proposition}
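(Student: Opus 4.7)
The plan is to reduce the inequality to the classical central binomial bound $\binom{2n}{n} < 4^n/\sqrt{\pi n}$ and then establish that via a Wallis-type integral comparison, which sidesteps any explicit error analysis for Stirling's formula. First I would use the identities $(2n)! = (2n)!! \cdot (2n-1)!!$ and $(2n)!! = 2^n n!$ to rewrite
\[ \frac{(2n-1)!!}{(2n)!!} \;=\; \frac{(2n)!}{(2^n n!)^2} \;=\; \frac{1}{4^n}\binom{2n}{n},\]
so the proposition becomes equivalent to $\binom{2n}{n} < 4^n/\sqrt{\pi n}$.

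Next I would introduce the Wallis integrals $I_k := \int_0^{\pi/2} \sin^k(x)\,dx$. The standard recursion $I_k = \tfrac{k-1}{k}I_{k-2}$, applied to the base cases $I_0 = \pi/2$ and $I_1 = 1$, gives the closed forms
\[ I_{2n} \;=\; \frac{(2n-1)!!}{(2n)!!}\cdot\frac{\pi}{2}, \qquad I_{2n-1} \;=\; \frac{(2n-2)!!}{(2n-1)!!}.\]
Because $0 < \sin(x) < 1$ on $(0,\pi/2)$, the pointwise inequality $\sin^{2n}(x) < \sin^{2n-1}(x)$ holds almost everywhere, so $I_{2n} < I_{2n-1}$ strictly. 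Substituting the closed forms, cross-multiplying, and simplifying via $(2n-2)!! = (2n)!!/(2n)$ yields $((2n-1)!!)^2\,\pi < ((2n)!!)^2/n$, and taking square roots delivers the proposition.

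There is essentially no technical obstacle here: the whole argument reduces to a monotonicity observation on a well-studied integral sequence, and strictness of $I_{2n} < I_{2n-1}$ is automatic because $\sin$ fails to equal $1$ on a set of full measure in $[0,\pi/2]$. An alternative route would invoke Robbins' refinement of Stirling's formula, $\sqrt{2\pi n}(n/e)^n e^{1/(12n+1)} < n! < \sqrt{2\pi n}(n/e)^n e^{1/(12n)}$, and plug directly into $\binom{2n}{n}/4^n$; this also works but requires verifying that the resulting exponential correction stays below $1$ for all $n \geq 1$, which is strictly more bookkeeping than the Wallis argument.
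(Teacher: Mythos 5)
Your proof is correct, and it takes a genuinely different route from what the paper indicates. The paper does not spell out a proof; it cites a reference and remarks only that the bound ``can be shown using Stirling's formula,'' which is the standard approach: plug a quantitative version of Stirling (e.g.\ Robbins' two-sided bounds) into $\binom{2n}{n}/4^n$ and verify the exponential error factor stays below $1$. You avoid Stirling entirely by reducing to $\binom{2n}{n} < 4^n/\sqrt{\pi n}$ and deriving it from the strict monotonicity of the Wallis integrals $I_{2n} < I_{2n-1}$, whose closed forms $I_{2n} = \frac{(2n-1)!!}{(2n)!!}\cdot\frac{\pi}{2}$ and $I_{2n-1} = \frac{(2n-2)!!}{(2n-1)!!}$ give the inequality directly after cross-multiplying and using $(2n-2)!! = (2n)!!/(2n)$. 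Your route is more elementary and self-contained — it needs nothing beyond the Wallis recursion and the pointwise inequality $\sin^{2n}x < \sin^{2n-1}x$ on $(0,\pi/2)$ — whereas the Stirling route is quicker if one already has a sharp Stirling bound in hand but drags in its own error analysis. One small remark: for $n=1$ the claimed inequality reads $1 < 2/\sqrt{\pi} \approx 1.128$, and your chain gives $I_2 = \pi/4 < I_1 = 1$, i.e.\ $\pi < 4$, so the base case is handled cleanly by the same argument with no separate check needed.
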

\noindent It will be convenient to let $(\!(2n-1)\!)_t := (2n-1) \times (2(n-1)-1) \times \cdots \times (2(n-t+1)-1)$ and $(\!(2n)\!)_t := 2n \times 2(n-1) \times \cdots \times 2(n-t+1)$ be the odd and even double factorial analogues of the \emph{falling factorial} $(n)_t := n!/t!$.

For any two perfect matchings $m,m' \in \mathcal{M}_{2n}$, let $\Gamma(m,m')$ be the multigraph on $[2n]$ whose edge multiset is the multiset union $m \cup m'$.  It is clear that $\Gamma(m,m') \cong \Gamma(m',m)$ is composed of disjoint cycles of even parity.  Let $k$ denote the number of disjoint cycles and let $2\lambda_i$ denote the length of an even cycle. If we order the cycles from longest to shortest and divide each of their lengths by two, we see that each graph corresponds to a (integer) \emph{partition} $\lambda = (\lambda_1, \lambda_2, \cdots , \lambda_k) \vdash n$. For any $\lambda \vdash n$, if there are $k$ parts that all have the same size $\lambda_i$ we use $\lambda_i^k$ to denote the multiplicity. Let $d(m,m'): \mathcal{M}_{2n} \times \mathcal{M}_{2n} \mapsto \lambda(n)$ denote this map where $\lambda(n)$ is the set of all partitions of $n$. We shall refer to $d(m,m')$ as the \emph{cycle type of $m'$ with respect to m} (or vice versa since $d(m,m') = d(m',m)$), and we say that $d(m^*,m)$ is \emph{the cycle type of m}.  Since $\Gamma(x,y) \cong \Gamma(x',y')$ if and only if $d(x,y) = d(x',y')$, let the graph $\Gamma_{\lambda}$ be a distinct representative from the isomorphism class $\lambda \vdash n$.  Illustrations of the graphs $\Gamma_{(n)}$ and $\Gamma_{(2,1^{n-2})}$ are provided in Figure~\ref{fig:graphs} for $n = 4$.

\begin{figure}
\centering
\begin{tikzpicture}[thick, scale=0.3]
  \foreach \x in {1,...,8}{
    \pgfmathparse{(\x)*(360 - 360/8)  + 7*360/16}
    \node[draw,circle,inner sep=0.1cm] (\x) at (\pgfmathresult:5.4cm) [ultra thick] {\textbf{\x}};
  }
     \draw[red,line width = 3] (1) -- (2);
     \draw[red,line width = 3] (3) -- (4);
     \draw[red,line width = 3] (5) -- (6);
     \draw[red,line width = 3] (7) -- (8);
     \draw[blue,dotted,line width = 3] (2) -- (3);
     \draw[blue,dotted,line width = 3](4) -- (5);
     \draw[blue,dotted,line width = 3] (6) -- (7);
     \draw[blue,dotted,line width = 3] (8) -- (1);
\end{tikzpicture}
\quad \quad \quad \quad \quad \quad 
\begin{tikzpicture}[thick, scale=0.3]
  \foreach \x in {1,...,8}{
    \pgfmathparse{(\x)*(360 - 360/8)  + 7*360/16}
    \node[draw,circle,inner sep=0.1cm] (\x) at (\pgfmathresult:5.4cm) [ultra thick] {\textbf{\x}};
  }
     \path[red,line width = 3] (1) edge[bend right] (2);
     \draw[red,line width = 3] (3) -- (4);
     \path[red,line width = 3] (5) edge[bend right] (6);
     \draw[red,line width = 3] (7) -- (8);
     \path[blue,dotted,line width = 3] (1) edge[bend left] (2);
     \draw[blue,dotted,line width = 3] (3) -- (8);
     \draw[blue,dotted,line width = 3] (4) -- (7);
     \path[blue,dotted,line width = 3] (5) edge[bend left] (6);
\end{tikzpicture}
\caption{The perfect matching $2~3|4~5|6~7|1~8$ on the left has cycle type $(n) \vdash n$ whereas the perfect matching  $1~2|3~8|4~7|5~6$ on the right has cycle type $(2,1^{n-2}) \vdash n$ for $n = 4$.}\label{fig:graphs}
\end{figure}
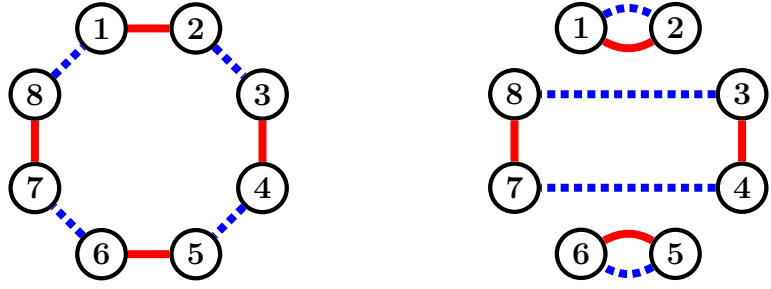

A \emph{$t$-derangement} of $\mathcal{M}_{2n}$ is a perfect matching $m \in \mathcal{M}_{2n}$ whose cycle type has fewer than $t$ parts of size 1. Their number, denoted as $D_2(n,t)$, can be determined via a recurrence quite similar to the classic one for permutation $t$-derangements:
\[D_2(0,1) = 1; D_2(1,1) = 0;\] 
\[D_2(n,1) = 2 (n - 1)(D_2(n - 1,1) + D_2(n - 2,1));\]
\[D_2(n,t) = \sum_{i=1}^t  \binom{2n}{2i}(2i-1)!! D_2(n-i,1).\]
Alternatively, we may compute $D_2(n,1)$ via the principle of inclusion-exclusion:
\[D_2(n,1) = \sum_{i=0}^n (-1)^i \binom{2n}{2i} (2(n-i)-1)!!.\]
Using this count, one can show that
\begin{align}\label{eq:derange}
D_2(n,1) = (2n-1)!!(1/\sqrt{e}+o(1)).
\end{align}

\noindent A \emph{pseudo-adjacency matrix} of a graph $\Gamma = (V,E)$ is a symmetric $|V| \times |V|$ matrix $\widetilde{A}(\Gamma)$ with constant row sum such that $\widetilde{A}(\Gamma)_{uv} \neq 0$ only if $uv \in E(\Gamma)$.
We let $\eta_i$ denote the eigenvalue associated to the $i$th eigenspace of a given pseudo-adjacency matrix, and we let $\eta_{\min} := \min_i \eta_i$ denote its least eigenvalue.  For any subgraph $\Gamma '$ of a graph $\Gamma = (V,E)$, let $V(\Gamma ') \subseteq V$ be the set of vertices of $\Gamma'$. An \emph{independent set} of $\Gamma$ is a subset $S \subseteq V(\Gamma)$ such that $uv \notin E(\Gamma)$ for all $u,v \in S$.  For any subset $X \subseteq V$, let $1_X$ denote its \emph{characteristic function}, that is, $1_X(v) = 1$ if $v \in X$; otherwise, $1_X(v) = 0$ for all $v \in V$.

The following result of Delsarte and Hoffman, which we shall call the \emph{ratio bound}, is the centerpiece of the first part of our main result.
\begin{theorem}[Ratio Bound]\label{thm:pseudoHoffman}
Let $\widetilde{A}(\Gamma)$ be a pseudo-adjacency matrix of $\Gamma = (V,E)$ with eigenvalues $\eta_1 \geq \eta_2 \geq \cdots \geq \eta_{\min}$ and corresponding orthonormal eigenvectors $v_1, v_2 \cdots , v_{\min}$. If $S\subseteq V$ is an independent set of $\Gamma$, then
\[ |S| \leq |V|\frac{-\eta_{\min}}{\eta_1 - \eta_{\min}}.\]
Moreover, if equality holds, then
\[ 1_S \in \emph{Span}\left( \{v_1\} \cup \{v_i : \eta_i = \eta_{\min} \} \right).\]
\end{theorem}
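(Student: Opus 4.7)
My plan is to execute the classical Delsarte--Hoffman argument, which rests on two observations. First, because $\widetilde{A}(\Gamma)$ has constant row sum, the all-ones vector $\mathbf{1}$ is an eigenvector; since $\eta_1$ is the largest eigenvalue, I take $v_1 = \mathbf{1}/\sqrt{|V|}$ as its orthonormal representative. Second, because $S$ is independent and $\widetilde{A}(\Gamma)_{uv} = 0$ whenever $uv \notin E(\Gamma)$, the characteristic vector of $S$ satisfies the crucial quadratic identity $1_S^{\top}\widetilde{A}(\Gamma)\,1_S = 0$.

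With these in hand, I would expand $1_S = \sum_i c_i v_i$ in the orthonormal eigenbasis. Parseval immediately gives $\sum_i c_i^2 = \|1_S\|^2 = |S|$, and projecting onto $v_1$ gives $c_1 = \langle 1_S, \mathbf{1}\rangle/\sqrt{|V|} = |S|/\sqrt{|V|}$. Substituting into the quadratic form,
\[
0 \;=\; \sum_i \eta_i c_i^2 \;=\; \eta_1\,\frac{|S|^2}{|V|} \,+\, \sum_{i\geq 2}\eta_i c_i^2,
\]
and the tail admits the lower bound $\eta_{\min}\bigl(|S| - |S|^2/|V|\bigr)$ since $\sum_{i \geq 2} c_i^2 = |S| - |S|^2/|V|$. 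A short rearrangement, using $\eta_1 - \eta_{\min} > 0$, produces the desired bound $|S| \leq |V|\cdot(-\eta_{\min})/(\eta_1-\eta_{\min})$.

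Finally, I would recover the equality statement by tracking where the tail estimate is slack. The inequality $\sum_{i\geq 2}\eta_i c_i^2 \geq \eta_{\min}\sum_{i\geq 2} c_i^2$ is tight precisely when $c_i = 0$ for every $i \geq 2$ with $\eta_i \neq \eta_{\min}$, which places $1_S$ in the span of $v_1$ together with the eigenvectors of $\eta_{\min}$, exactly as claimed. I do not anticipate any genuine obstacle here: the entire argument is essentially mechanical once the vanishing $1_S^{\top}\widetilde{A}(\Gamma)\,1_S = 0$ is in play. The only point requiring a touch of care is the sign bookkeeping in the final rearrangement, together with the implicit assumption that $\eta_1$ genuinely coincides with the constant row sum of $\widetilde{A}(\Gamma)$, so that $\mathbf{1}$ really is a top eigenvector and the formula $c_1 = |S|/\sqrt{|V|}$ is legitimate.
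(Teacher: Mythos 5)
Your proposal is correct and follows the same Delsarte--Hoffman argument sketched in the paper: expand $1_S$ in the orthonormal eigenbasis, use $1_S^{\top}\widetilde{A}(\Gamma)\,1_S = 0$ with the all-ones top eigenvector and Parseval to obtain the quadratic inequality, rearrange, and read off the equality case from where the tail bound is tight. No gaps; it matches the paper's proof of Theorem~\ref{thm:pseudoHoffman}.
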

\noindent After writing $f := 1_S = \sum_{i=1}^{|V|}a_iv_i$ in the basis of orthonormal eigenvectors and setting $\alpha := |S|/|V|$, the ratio bound is easy to see once one observes
\begin{align}\label{eq:ratio}
0 = f^\top \widetilde{A}(\Gamma) f = \sum^{|V|}_{i=1} \eta_ia_i^2 &\geq \eta_1\alpha^2 + \eta_{\min}  \sum_{i = 2}^{|V|} a_i^2  = \eta_1\alpha^2 + 	\eta_{\min}(\alpha - \alpha^2),
\end{align}
where the last equality follows from Parseval's identity.  The ratio bound now follows from routine manipulation.

The graph that we will apply to the ratio bound is the \emph{perfect matching $t$-derangement graph} $\Gamma_t$ defined such that two perfect matchings $m,m' \in \mathcal{M}_{2n}$ are adjacent if they have less than $t$ edges in common, equivalently, $d(m,m')$ has fewer than $t$ parts of size 1. For $t=1$, we recover the \emph{perfect matching derangement graph}, which has received a fair amount attention in recent years~\cite{GodsilMeagher,GodsilM15,KuW17,Lindzey17}.  
It is easy to see that the independent sets of $\Gamma_t$ are in one-to-one correspondence with $t$-intersecting families of $\mathcal{M}_{2n}$; therefore, the ratio bound gives an upper bound on the size of a $t$-intersecting family.

Taking a quick glimpse ahead, in Section~\ref{sec:assoc} we show that the eigenspaces of $\Gamma_t$ and appropriately defined pseudo-adjacency matrices $\widetilde{A}(\Gamma_t)$ are parameterized by integer partitions $\lambda \vdash 2n$ that are \emph{even}, that is, 
$$\lambda = 2 \mu := (2\mu_1,2\mu_2, \cdots, 2\mu_k) \vdash 2n$$ 
for some $\mu \vdash n$.  Of these eigenspaces, the ones corresponding to \emph{fat even partitions}, $2\lambda \vdash 2n$ such that $2\lambda_1 \geq 2(n-t)$, will be of utmost importance.\\ 

\noindent We are now in a position to outline the proof of the upper bound of the main result.

\subsection*{Proof Sketch I}
Our goal is to show there exists a pseudo-adjacency matrix $\widetilde{A}(\Gamma_t)$ with eigenvalues $\eta_1 \geq \eta_2 \geq \cdots \geq \eta_{\min}$ satisfying
\[(2(n-t)-1)!! = (2n-1)!!\frac{-\eta_{\min}}{\eta_1 - \eta_{\min}}.\] 
By the ratio bound, such a matrix would imply that any canonically $t$-intersecting family $\mathcal{F}$ is a maximum independent set of $\Gamma_t$ and that any maximum independent set $S$ satisfies
\[ 1_S \in \text{Span}\left( \{v_1\} \cup \{v_i : \eta_i = \eta_{\min} \} \right)\]
where $v_i$ is an eigenvector corresponding to eigenvalue $\eta_i$. These two consequences imply
\[  \text{Span}\left\{ \{v_1\} \cup \{v_i : \eta_i = \eta_{\min} \} \right\} \geq \text{Span}\{ 1_{\mathcal{F}} : \mathcal{F} \text{ is canonically $t$-intersecting} \}.\]
We would like these two spaces to coincide, but the way the right-hand side is defined makes it particularly hard to determine which eigenspaces of our pseudo-adjacency matrix should correspond to the minimum eigenvalue. 
It turns out that for $t < n/2$, that the span of characteristic functions of canonically $t$-intersecting families is a subspace of the eigenspaces corresponding to fat even partitions, which we show in Section~\ref{sec:low}. 

In light of this, we define our pseudo-adjacency matrix $\widetilde{A}(\Gamma_t)$ so that $\eta_\lambda = \eta_{\min}$ for all fat even partitions $2\lambda$ except for $2\lambda = (2n)$.
In Section~\ref{sec:pseudo}, we show this is possible for sufficiently large $n$ by constructing a pseudo-adjacency matrix satisfying the foregoing such that $|\eta_\mu| = o(|\eta_{\lambda}|)$ for all even partitions $2\mu$ that are not fat, which will conclude the proof of the upper bound of our main result.  Our proof sketch of the characterization of the extremal families is deferred to Section~\ref{sec:sketch2}.

The strategy outlined above lies atop a somewhat baroque algebraic foundation, which we spend the next few sections developing.  

\section{Representation Theory Background}
Our work draws upon the fundamentals of the ordinary representation theory of finite groups and their Fourier analysis. Statisticians~\cite{CST,Diaconis88} have given a few treatments of group representation theory from a Fourier-analytical point of view, to which we refer the reader for more details. Throughout this section, let $H,K \leq G$ be subgroups of a finite group $G$, and $V$ be a finite dimensional vector space over $\mathbb{C}$.

\subsection{Finite Group Representation Theory}
For any set $X$, let $\mathbb{C}[X]$ denote the vector space of dimension $|X|$ of complex-valued functions over $X$, equipped with the inner product $$\langle f,g \rangle_X := \sum_{x \in X} f(x)\overline{g(x)}.$$  
A \emph{representation} $(\phi,V)$ of $G$ is a homomorphism $\phi : G \rightarrow GL(V)$ where $GL(V)$ is the \emph{general linear group}, that is, the group of $(\dim V) \times (\dim V)$ invertible matrices. It is customary to be less formal and denote the representation $(\phi,V)$ simply as $\phi$ when $V$ is understood, or as $V$ when $\phi$ is understood.  For any representation $\phi$, we define its \emph{dimension} to be $\dim \phi := \dim V$. When working concretely with a representation $\phi$, we abuse terminology and let $\phi(g)$ refer to a $(\dim \phi) \times (\dim \phi)$ matrix realization of $\phi$. Two representations $\rho,\phi$ are \emph{equivalent} if $\rho(g)$ and $\phi(g)$ are similar for all $g \in G$.

Let $(\phi,V)$ be a representation of $G$, and let $W \leq V$ be a \emph{G-invariant} subspace, that is, $\phi(g)w \in W$ for all $w \in W$ and for all $g \in G$.  We say that $(\phi|_W,W)$ is a \emph{subrepresentation} of $\phi$ where $\phi|_W$ is the restriction of $\phi$ to the subspace $W$. A representation $(\phi,V)$ is an \emph{irreducible representation} (or simply, an \emph{irreducible}) if it has no proper subrepresentations. The \emph{trivial representation} $(1, \mathbb{C})$ defined such that $1 : g \rightarrow 1$ for all $g \in G$ is clearly an irreducible of dimension one for any group $G$.

It is well-known that there is a bijection between the set of inequivalent irreducibles of $G$ and its conjugacy classes $\mathcal{C}$, and that any representation $V$ of $G$ decomposes uniquely as a direct sum of inequivalent irreducibles $V_i$ of $G$:
\[V \cong \bigoplus_{i=1}^{|\mathcal{C}|} m_iV_i\]
where $m_i$ is $V_i$'s \emph{multiplicity}, i.e., the number of occurrences of $V_i$ in the decomposition.

A natural way to find representations of groups is to let them act on sets.
In particular, for any group $G$ acting on a set $X$, let $(\phi,\mathbb{C}[X])$ be the \emph{permutation representation} of $G$ on $X$ defined such that
\[\phi(g)[f(x)] = f(g^{-1}x)\]
for all $g \in G$, $f \in \mathbb{C}[X]$, and $x \in X$.
\noindent If we let $G$ act on itself, then we obtain the \emph{regular representation}, which admits the following decomposition into irreducibles:
\[ \mathbb{C}[G] \cong \bigoplus_{i=1}^{|\mathcal{C}|}~(\dim V_i)~V_i\]
where $V_i$ is the $i$th irreducible of $G$.

Letting $e_ge_h = e_{gh}$ over the standard basis $\{e_g\}_{g \in G}$ of $\mathbb{C}[G]$, we see that $\mathbb{C}[G]$ is an algebra, the so-called \emph{group algebra} of $G$. Some of our results will involve a few calculations in the group algebra $\mathbb{R}[S_{2n}]$, whose decomposition into irreducibles admits an elegant combinatorial description overviewed in Section~\ref{sec:sym}.

For any $K \leq G$, there is a chain of subalgebras  $\mathbb{C}[K \backslash G /K] \leq \mathbb{C}[G/K] \leq \mathbb{C}[G]$ where
$$\mathbb{C}[G /K] = \{ f \in \mathbb{C}[G] : f(g) = f(gk)~\forall g \in G,~\forall k \in K \}$$ 
is the algebra of functions that are constant on the right cosets $G/K$, and 
$$\mathbb{C}[K \backslash G /K] = \{ f \in \mathbb{C}[G] : f(g) = f(kgk')~\forall g \in G,~\forall k,k' \in K \}$$  
is the algebra of functions that are constant on the double cosets $K \backslash G /K$.  The chain
$$\mathbb{R}[H_n \backslash S_{2n} /H_n] \leq \mathbb{R}[S_{2n}/H_n] \cong \mathbb{R}[\mathcal{M}_{2n}] \leq \mathbb{R}[S_{2n}]$$
will be of particular importance to us.  In Section~\ref{sec:assoc}, we show how $\mathbb{R}[\mathcal{M}_{2n}]$ decomposes into a direct sum of irreducibles of $S_{2n}$ by considering the permutation representation of $S_{2n}$ acting on $\mathcal{M}_{2n}$.

For any (irreducible) representation $\phi$ of $G$, the \emph{(irreducible) character} $\chi_\phi$ of $\phi$ is the map $\chi_\phi : G \rightarrow \mathbb{C}$ such that $\chi_\phi (g) := \text{Tr}(\phi(g))$. Similar matrices have the same trace, thus characters of representations are \emph{class functions}, that is, they are constant on conjugacy classes. Furthermore, the characters of the set of all irreducible representations of a group $G$ form an orthonormal basis for the space of all class functions of $\mathbb{C}[G]$.  

From these basic properties of characters, it is not hard to show the following.
\begin{lemma}\label{lem:mults}
\emph{\cite{Diaconis88}} Let $(\rho,V')$ be an irreducible of $G$ and let $(\phi,V)$ be a representation of $G$ such that
\[V \cong  \bigoplus_{i=1}^{|\mathcal{C}|} m_iV_i.\]
Then the number of irreducibles $V_i$ that are equivalent to $V'$ equals $\langle \chi_\phi,\chi_\rho \rangle = m_i$.
\end{lemma}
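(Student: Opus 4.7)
The plan is a standard character-theoretic calculation that exploits two facts stated (or implicit) in the preceding text: characters are additive on direct sums, and the irreducible characters of $G$ form an orthonormal basis for the space of class functions under $\langle \cdot,\cdot\rangle_G$.

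First I would record that if $(\phi_1,V_1)$ and $(\phi_2,V_2)$ are representations of $G$, then $\chi_{V_1 \oplus V_2} = \chi_{V_1} + \chi_{V_2}$, since the block-diagonal matrix realizing $\phi_1 \oplus \phi_2$ has trace equal to the sum of the traces of its blocks. Applying this inductively to the decomposition $V \cong \bigoplus_{i=1}^{|\mathcal{C}|} m_i V_i$, I obtain
\[
\chi_\phi \;=\; \sum_{i=1}^{|\mathcal{C}|} m_i \chi_{V_i}.
\]

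Next I would invoke orthonormality of irreducible characters: since the irreducibles $V_1,\dots,V_{|\mathcal{C}|}$ form a complete set of pairwise inequivalent irreducibles of $G$, one has $\langle \chi_{V_i},\chi_{V_j}\rangle_G = \delta_{ij}$. Because $\rho$ is irreducible, it is equivalent to exactly one $V_{i_0}$ in the list (equivalent representations have equal characters), and hence $\chi_\rho = \chi_{V_{i_0}}$. Taking the inner product of $\chi_\phi$ with $\chi_\rho$ and using bilinearity then yields
\[
\langle \chi_\phi,\chi_\rho\rangle_G \;=\; \sum_{i=1}^{|\mathcal{C}|} m_i \langle \chi_{V_i},\chi_{V_{i_0}}\rangle_G \;=\; m_{i_0},
\]
which is precisely the multiplicity of $V'$ in $V$.

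There is no genuine obstacle here; the only substantive ingredient is character orthogonality, which is quoted from the standard references cited in the section. The proof is therefore essentially a two-line consequence of additivity of characters plus $\langle \chi_{V_i},\chi_{V_j}\rangle_G = \delta_{ij}$, and the uniqueness of the decomposition (stated just above the lemma) guarantees that $m_{i_0}$ is well defined independently of how the isomorphism is chosen.
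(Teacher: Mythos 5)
Your proof is correct and is exactly the standard argument the paper delegates to the cited reference \cite{Diaconis88}: additivity of characters over direct sums plus orthonormality of irreducible characters. The paper itself gives no proof (it quotes the result), so there is nothing to diverge from; your two-step computation is the canonical route.
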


Lemma~\ref{lem:zerosum} is a generalization of the familiar fact that the sum of all of primitive $n$th roots of unity is zero. Its proof is essentially a corollary of Schur's lemma, and it will be helpful for simplifying sums of representations.
\begin{lemma}[Schur's Lemma]
Let $(\varphi,V)$ and $(\psi,W)$ be representations of $G$, and let $T : V \rightarrow W$ be a linear transformation. If $T\varphi(g) = \psi(g)T$ for all $g \in G$, then $T$ is either the zero map or an isomorphism.
In particular, if $(\phi,V)$ is an irreducible of $G$, then the only linear operators of $V$ that commute with $\phi$ are scalar multiples of the identity.
\end{lemma}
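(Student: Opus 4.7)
The plan is to reduce both assertions to the observation that intertwiners respect $G$-invariant subspaces, and then to use irreducibility to rule out all but the trivial possibilities. Assume, as is standard for this form of the lemma, that both $(\varphi,V)$ and $(\psi,W)$ are irreducible. First I would verify that $\ker T$ is $\varphi$-invariant and $\operatorname{im} T$ is $\psi$-invariant: if $Tv = 0$ then $T\varphi(g)v = \psi(g)Tv = 0$, so $\varphi(g)v \in \ker T$; similarly, for any $v \in V$, $\psi(g)Tv = T\varphi(g)v \in \operatorname{im} T$. Irreducibility of $V$ forces $\ker T \in \{0, V\}$, and irreducibility of $W$ forces $\operatorname{im} T \in \{0, W\}$. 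A two-line case analysis finishes the first assertion: if $\ker T = V$ then $T = 0$; otherwise $T$ is injective, $\operatorname{im} T$ is nonzero, and hence $\operatorname{im} T = W$, making $T$ an isomorphism.

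For the ``in particular'' statement, I would exploit that the scalar field is $\mathbb{C}$. Given an irreducible $(\phi,V)$ and a linear operator $T$ on $V$ satisfying $T\phi(g) = \phi(g)T$ for every $g \in G$, pick any eigenvalue $\lambda$ of $T$, which exists because $V$ is finite-dimensional over an algebraically closed field. Since scalar operators commute with everything, $T - \lambda I$ also commutes with $\phi(g)$ for every $g$, so the first part applies to $T - \lambda I : V \to V$. By the choice of $\lambda$, $\ker(T - \lambda I)$ is nonzero, so $T - \lambda I$ cannot be an isomorphism; the dichotomy therefore forces $T - \lambda I = 0$, i.e., $T = \lambda I$.

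The main obstacle, such as it is, lies only in the second part, where the existence of an eigenvalue relies on the base field being algebraically closed; this is the reason the paper consistently works over $\mathbb{C}$. Everything else is a direct unpacking of the definitions of intertwiner and irreducibility. Since this lemma is invoked in the sequel only as a technical stepping stone to standard character identities (for instance, to prove the generalization labelled \textsc{Lemma~\ref{lem:zerosum}} in the excerpt), the clean two-step argument outlined above suffices.
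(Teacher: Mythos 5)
Your proof is correct and is the standard textbook argument for Schur's Lemma. Note, however, that the paper itself does not supply a proof of this lemma — it is stated without proof as a classical result and then invoked to establish Lemma~\ref{lem:zerosum} — so there is no in-paper argument to compare against. One thing worth highlighting: you rightly observe that the first sentence of the statement, as printed, omits the hypothesis that $(\varphi,V)$ and $(\psi,W)$ are irreducible; without that hypothesis the dichotomy ``zero map or isomorphism'' fails (e.g., a projection onto a proper nonzero invariant subspace is an intertwiner that is neither). Your step verifying that $\ker T$ is $\varphi$-invariant and $\operatorname{im} T$ is $\psi$-invariant, followed by the two-case analysis, is exactly right, and your derivation of the ``in particular'' clause via an eigenvalue of $T$ and applying the dichotomy to $T - \lambda I$ is the standard reduction and correctly flags the reliance on $\mathbb{C}$ being algebraically closed (and $V$ finite-dimensional), which is indeed the setting the paper works in throughout.
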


\begin{lemma}\label{lem:zerosum}
If $\rho$ is a non-trivial irreducible of $G$, then 
\[\sum_{g \in G} \rho(g) = 0.\] 
\end{lemma}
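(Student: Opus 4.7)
The plan is to set $T := \sum_{g \in G} \rho(g)$, viewed as a linear operator on the underlying space $V$ of $\rho$, and show directly that $T = 0$ using the invariance of summation over $G$ together with Schur's lemma.

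First I would exploit left-translation invariance. For any $h \in G$, the map $g \mapsto hg$ is a bijection on $G$, so
\[
\rho(h)\, T \;=\; \sum_{g \in G} \rho(hg) \;=\; \sum_{g' \in G} \rho(g') \;=\; T.
\]
In particular, $\rho(h) T = T \cdot \mathrm{Id}_V$, which rearranges to $\rho(h) T = T$. Thinking of this equation as $\rho(h) \circ T = T \circ \mathrm{Id}_V$, where $\mathrm{Id}_V$ is the identity (trivially an intertwiner from $(\rho,V)$ to $(\rho,V)$, and more importantly from $(\rho,V)$ into itself viewed via right multiplication), one sees that $T$ intertwines $\rho$ with itself.

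Next I would apply Schur's lemma. The computation above shows that $T$ commutes with $\rho(h)$ for every $h \in G$ (one can also verify $T\rho(h) = T$ analogously by right translation, giving $\rho(h)T = T\rho(h)$ directly). Since $(\rho,V)$ is irreducible, Schur's lemma forces $T = \lambda\, \mathrm{Id}_V$ for some scalar $\lambda \in \mathbb{C}$. Substituting back into $\rho(h) T = T$ yields $\lambda \rho(h) = \lambda \,\mathrm{Id}_V$ for every $h \in G$. If $\lambda \neq 0$, one could cancel to obtain $\rho(h) = \mathrm{Id}_V$ for all $h$, contradicting the hypothesis that $\rho$ is non-trivial. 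Hence $\lambda = 0$ and $T = 0$, as desired.

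There is no real obstacle here; the only minor subtlety is to make sure to invoke the correct form of Schur's lemma (the ``scalar multiple of the identity'' form, which is exactly the one recorded in the statement immediately preceding this lemma). Everything else is a one-line change of variable in the group sum.
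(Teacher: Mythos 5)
Your proof is correct and follows the same core strategy as the paper: use the translation-invariance of the group sum to show $T:=\sum_{g\in G}\rho(g)$ commutes with every $\rho(h)$, then invoke Schur's lemma to get $T=\lambda\,\mathrm{Id}_V$. The only difference is in how you conclude $\lambda=0$: the paper appeals to character orthogonality $\langle\chi_\rho,\chi_1\rangle=0$, whereas you substitute $T=\lambda\,\mathrm{Id}_V$ back into $\rho(h)T=T$ and observe that $\lambda\neq 0$ would force $\rho(h)=\mathrm{Id}_V$ for all $h$, contradicting non-triviality — a slightly more self-contained and equally valid finish.
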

\begin{proof}
For any $k \in G$, we have
\begin{align*}
\rho(k) \big(\sum_{g \in G} \rho(g)\big) = \sum_{g \in G} \rho(k)\rho(g) &= \sum_{g' \in kG} \rho(g')\\
 &= \sum_{g' \in Gk} \rho(g')\\
 &=\big(\sum_{g \in G} \rho(g)\big)\rho(k),
\end{align*}
implying that $\sum_{g \in G} \rho(g) = cI$ for some constant $c$ by Schur's Lemma.  Since $\rho \neq 1$ is irreducible, it follows that $\langle \chi_\rho, \chi_{1} \rangle = 0$, thus $c=0$. We deduce that $\sum_{g \in G} \rho(g) = 0$.
\end{proof}
\noindent Lemma~\ref{lem:prod} shows that direct products of groups and their irreducibles behave as expected.  
\begin{lemma}\label{lem:prod}
\emph{\cite{Serre96}} Let $(\tau,V)$ and $(\rho,W)$ be irreducibles of finite groups $G$ and $G'$ respectively.  Then $V \otimes W$ is an irreducible of $G \times G'$, and any irreducible of $G \times G'$ is of this form.
\end{lemma}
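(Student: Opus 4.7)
The plan is to work character-theoretically, building on Lemma~\ref{lem:mults}. First I would equip $V \otimes W$ with the natural action of $G \times G'$ given by $(\tau \otimes \rho)(g,g') := \tau(g) \otimes \rho(g')$; this is a representation because the tensor product of invertible linear maps is invertible and the tensor product is multiplicative in each factor. Since the trace of a tensor product of operators factors as the product of traces, the character satisfies
$$\chi_{\tau \otimes \rho}(g,g') \;=\; \chi_\tau(g)\,\chi_\rho(g'),$$
and this single identity will drive the entire proof.

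To prove that $\tau \otimes \rho$ is irreducible, I would apply the standard character criterion: a representation is irreducible if and only if the squared norm of its character equals $1$ (a special case of Lemma~\ref{lem:mults} with $V'=V\otimes W$). The inner product on $G \times G'$ factors as
$$\langle \chi_{\tau \otimes \rho}, \chi_{\tau \otimes \rho}\rangle_{G\times G'} \;=\; \langle \chi_\tau, \chi_\tau\rangle_G \cdot \langle \chi_\rho, \chi_\rho\rangle_{G'} \;=\; 1\cdot 1 \;=\; 1,$$
where the two factors on the right equal $1$ by the irreducibility of $\tau$ and $\rho$.

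For the converse claim---that every irreducible of $G \times G'$ arises in this way---I would argue by exhaustion via dimensions. Let $\{V_i\}$ and $\{W_j\}$ be complete lists of inequivalent irreducibles of $G$ and $G'$ respectively. The tensor products $V_i \otimes W_j$ are pairwise inequivalent: if $V_i \otimes W_j \cong V_{i'} \otimes W_{j'}$, then evaluating the character identity at $(g, 1_{G'})$ yields $(\dim W_j)\,\chi_{V_i}(g) = (\dim W_{j'})\,\chi_{V_{i'}}(g)$ for all $g \in G$, and the linear independence of irreducible characters of $G$ forces $i = i'$; by symmetry $j = j'$. Summing the squares of dimensions,
$$\sum_{i,j}\bigl(\dim (V_i \otimes W_j)\bigr)^2 \;=\; \Bigl(\sum_i(\dim V_i)^2\Bigr)\Bigl(\sum_j(\dim W_j)^2\Bigr) \;=\; |G|\cdot|G'| \;=\; |G \times G'|,$$
which equals the total sum of squares of dimensions of all irreducibles of $G \times G'$ (obtained by decomposing its regular representation). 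Hence $\{V_i \otimes W_j\}_{i,j}$ already exhausts every inequivalent irreducible of $G \times G'$.

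There is no genuine obstacle here: the product-of-traces identity does essentially all of the work, delivering irreducibility, inequivalence of distinct tensor products, and---via the sum-of-squares identity for regular representations---the exhaustion step in turn.
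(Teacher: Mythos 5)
Your proof is correct and follows the standard character-theoretic argument found in the cited reference (Serre); the paper itself offers no proof, simply citing \cite{Serre96} for this fact. Both the irreducibility criterion via $\langle\chi,\chi\rangle=1$ and the exhaustion-by-dimension-count are exactly the textbook route, so this matches what the source delivers.
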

It is well-known that any representation $\phi$ of $G$ is equivalent to a direct sum of irreducibles of $H \leq G$. This representation  $(\phi \downarrow^G_H ,V \downarrow^G_H)$ is called the \emph{restriction} of $\phi$ to $H$, and is obtained simply by restricting the domain of $\phi$ to $H$. Even if $\phi$ is an irreducible of $G$, the restricted representation is typically not an irreducible of $H$.  

Conversely, any representation of $H \leq G$ is equivalent to a direct sum of irreducibles of $G$, which we describe below.
Let $(\rho,V)$ be a representation of $H$ and $\hat{g}_1, \hat{g}_2, \cdots, \hat{g}_k$ be a system of distinct representatives (SDR) of $G/H$ where $k := [G:H]$. Define
\[  V \uparrow^G_H := \bigoplus_{i=1}^k \mathbf{\hat{g_i}}V\]
where $\mathbf{\hat{g}_i}V \cong V$ is a copy of $V$ associated to the coset $\hat{g_i}H$.  For any $g \in G$, there exists an $h_i \in H$ and $\hat{g}_{j(i)} \in G$ where $j(i) \in [k]$ such that $g^{-1}\hat{g_i} = \hat{g}_{j(i)}h_i$. Define $\rho \uparrow^G_H$ to act on  $V \uparrow^G_H$ as follows:
\[  \left( \rho \uparrow^G_H (g) \right) \sum_{i=1}^k \mathbf{\hat{g}_i}v_i = \sum_{i=1}^k \mathbf{\hat{g}_{j(i)}} \rho(h_i)v_i \]
such that $\mathbf{\hat{g}_i}v_i \in \mathbf{\hat{g_i}}V$ and $\mathbf{\hat{g}_{j(i)}} \rho(h_i)v_i \in \mathbf{\hat{g}_{j(i)}} V$ for all $g \in G$.  We say that $(\rho \uparrow_H^G , V \uparrow^G_H)$ is the \emph{induced representation} of $\rho$. It is easy to see that $\dim \left( V\uparrow^G_H \right) = k \cdot (\dim V)$, and we may compute the character of $\rho \uparrow^G_H$ as follows: 
\[ \chi_{\rho \uparrow^G_H }(g) = \sum_{x \in S} \chi_\rho(x^{-1}gx),\]
where $S$ is an SDR for $G/H$ and $\chi_\rho(x^{-1}gx)= 0$ if $x^{-1}gx \notin H$. Notice that $1 \uparrow^G_K$ is equivalent to the permutation representation of $G$ on $G/K$
\[  1 \uparrow^G_K (g) \sum_{i=1}^{k} e_{g_i} = \sum_{i=1}^k e_{g_{j(i)}},\]
and so it follows that $1 \uparrow^G_K$  and $\mathbb{C}[G/K]$ are isomorphic. 

It is clear that restriction is transitive, that is, $(\rho \downarrow^G_H) \downarrow^H_K \cong \rho \downarrow^G_K$ for any representation $\rho$ of $G$ such that $K \leq H \leq G$.  A less trivial fact is that the same is true of induction.
\begin{theorem}[Transitivity of Induction~\cite{Serre96}]
Let $\rho$ be a representation of $K$ such that $K \leq H \leq G$. Then 
\[ (\rho \uparrow_K^{H}) \uparrow_H^G \cong \rho \uparrow_K^G.\]
\end{theorem}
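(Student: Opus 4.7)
The plan is to prove transitivity of induction by computing characters and invoking the standard fact (implicit in Lemma~\ref{lem:mults}) that two complex representations are equivalent iff they have the same character. The character formula displayed just above the theorem already contains essentially everything needed, so the proof reduces to an indexing exercise.

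First I would fix a system of distinct representatives $X = \{x_1,\ldots,x_m\}$ for $G/H$ and $Y = \{y_1,\ldots,y_k\}$ for $H/K$, and verify the elementary fact that the product set $XY := \{x_iy_j : x_i \in X,\, y_j \in Y\}$ is an SDR for $G/K$. This follows because every $g \in G$ can be written uniquely as $x_iH$, and then the $H$-part lies in a unique coset $y_jK$; transitivity of the coset partition does the rest. The cardinality check $|XY| = mk = [G:H][H:K] = [G:K]$ confirms no coincidences occur.

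Next I would apply the character formula for induction twice. Adopting the convention that $\chi_\rho(h) = 0$ when $h \notin K$ (and similarly for $\chi_{\rho\uparrow_K^H}$ outside $H$), the formula gives
\[\chi_{(\rho \uparrow_K^H)\uparrow_H^G}(g) = \sum_{x \in X} \chi_{\rho\uparrow_K^H}(x^{-1}gx) = \sum_{x \in X}\sum_{y \in Y} \chi_\rho\bigl(y^{-1}(x^{-1}gx)y\bigr) = \sum_{z \in XY} \chi_\rho(z^{-1}gz),\]
where the last equality uses $z = xy$ and the SDR property from the previous step. The right-hand side is exactly $\chi_{\rho \uparrow_K^G}(g)$ by one more application of the character formula. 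Thus $(\rho \uparrow_K^H)\uparrow_H^G$ and $\rho \uparrow_K^G$ have identical characters and are therefore equivalent.

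The main obstacle, modest as it is, is bookkeeping: one must be careful that the extension-by-zero convention is consistent between the two levels of induction, so that the vanishing condition ``$x^{-1}gx \in H$ and then $y^{-1}(x^{-1}gx)y \in K$'' matches ``$(xy)^{-1}g(xy) \in K$'' term by term. Once this is handled, the identity is purely formal. An alternative route would be to exhibit the explicit vector-space isomorphism $\bigoplus_{i,j} \hat{x}_i\hat{y}_j V \to \bigoplus_\ell \hat{z}_\ell V$ induced by $\hat{x}_i \hat{y}_j \mapsto \widehat{x_iy_j}$ using the induced-action formula from the excerpt, but the character argument is shorter and avoids choosing bases.
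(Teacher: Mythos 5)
The paper itself does not prove this theorem; it simply cites Serre and moves on, so there is no in-paper argument to compare against. Your character-theoretic proof is correct and standard, and it meshes well with the character formula for induction that the paper records just above the statement.

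Two small points worth making explicit. First, the SDR claim is exactly right and your uniqueness/cardinality check handles it. Second, the "bookkeeping" step you flag deserves one sentence of actual verification rather than an assertion that it can be handled: you need that if $x^{-1}gx \notin H$ then $\sum_{y\in Y}\chi_\rho(y^{-1}(x^{-1}gx)y)=0$, so that the nested sum really equals $\sum_{x}\chi_{\rho\uparrow_K^H}(x^{-1}gx)$ with its extension-by-zero convention. This holds because if $y^{-1}(x^{-1}gx)y\in K$ for some $y\in Y$, then $x^{-1}gx\in yKy^{-1}\subseteq H$ (using $y\in H$, $K\leq H$), which is the contrapositive you need; equivalently, $(xy)^{-1}g(xy)\in K$ if and only if $x^{-1}gx\in H$ and $y^{-1}(x^{-1}gx)y\in K$. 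With that spelled out, the chain of equalities is airtight. For comparison, Serre's own treatment proves transitivity more structurally, essentially from the realization of induction as $\mathbb{C}[G]\otimes_{\mathbb{C}[K]}V$ and associativity of the tensor product, which buys a module-level isomorphism rather than merely an equality of characters; your character argument is shorter but only yields equivalence, which is all that is claimed here.
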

A discussion of group representations and their characters would not be complete without at least mentioning the following theorem of Frobenius and its corollary.
\begin{theorem}[Frobenius Reciprocity for Characters~\cite{Serre96}]
For any representation $\phi$ of $G$ and representation $\rho$ of $H$, we have
\[ \langle \chi_{\rho \uparrow^G_H }, \chi_\phi  \rangle = \langle \chi_{\phi \downarrow^G_H }, \chi_\rho \rangle.\]
\end{theorem}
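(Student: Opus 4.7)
The plan is a direct computation that hinges on a single change of variables inside the Hermitian inner product. I would first write both sides explicitly: the left-hand side as $\frac{1}{|G|}\sum_{g \in G}\chi_{\rho\uparrow^G_H}(g)\overline{\chi_\phi(g)}$ and the right-hand side as $\frac{1}{|H|}\sum_{h \in H}\chi_{\phi\downarrow^G_H}(h)\overline{\chi_\rho(h)}$. Because the inner product of two characters of genuine representations is always a non-negative integer (Lemma~\ref{lem:mults}, applied to irreducible decompositions of $\rho$ and $\phi$), the quantity is real, so I may freely swap the two arguments at the end of the argument.

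Next, I would substitute the induced-character formula supplied immediately before the theorem, $\chi_{\rho\uparrow^G_H}(g) = \sum_{x \in S}\chi_\rho(x^{-1}gx)$ under the convention $\chi_\rho(y) = 0$ for $y \notin H$, into the left-hand side. Exchanging the order of summation recasts it as $\frac{1}{|G|}\sum_{x \in S}\sum_{g \in G}\chi_\rho(x^{-1}gx)\overline{\chi_\phi(g)}$. For each fixed coset representative $x$, the map $g \mapsto h := x^{-1}gx$ is a bijection of $G$ onto itself, and the vanishing convention restricts the nonzero contributions to those with $h \in H$; that is, I reparameterize by $h$ and drop all terms with $h \notin H$.

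The key observation is that $\chi_\phi$ is a class function, so $\overline{\chi_\phi(xhx^{-1})} = \overline{\chi_\phi(h)}$. The double sum therefore decouples into $\sum_{x \in S}\sum_{h \in H}\chi_\rho(h)\overline{\chi_\phi(h)}$, and since the summand no longer depends on $x$, the outer sum contributes a factor of $|S| = [G:H] = |G|/|H|$. Combined with the prefactor $\frac{1}{|G|}$, this produces $\frac{1}{|H|}\sum_{h \in H}\chi_\rho(h)\overline{\chi_\phi(h)}$, which coincides with the right-hand side once one invokes the aforementioned conjugation symmetry.

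I do not anticipate any genuine obstacle: the argument is purely formal once the induced-character formula is in hand. The only points that warrant care are (i) keeping the vanishing convention for $\chi_\rho$ outside of $H$ consistent when passing from a sum over $G$ to a sum over $H$, and (ii) verifying that $g \mapsto x^{-1}gx$ is truly a bijection of $G$ for each fixed $x \in S$. Both are immediate from the group axioms, so the entire proof amounts to careful bookkeeping around the SDR.
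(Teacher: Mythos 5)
Your proof is correct, and since the paper states this result by citation to Serre without supplying its own argument, there is no in-paper proof to compare against: you have simply reconstructed the standard textbook computation, which is exactly what is intended here. The chain of steps — substitute the SDR formula for the induced character, swap sums, reparameterize by $h = x^{-1}gx$, invoke the vanishing convention to collapse the inner sum to $H$, use that $\chi_\phi$ is a class function so the $x$-dependence disappears, and pick up a factor of $[G:H]$ — lands on $\frac{1}{|H|}\sum_{h\in H}\chi_\rho(h)\overline{\chi_\phi(h)}$, which is the complex conjugate of the right-hand side. You are right that this final conjugate swap needs justification, and appealing to Lemma~\ref{lem:mults} (applied inside $H$ after decomposing $\rho$ and $\phi\downarrow_H^G$ into irreducibles) to conclude realness is a valid route. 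A slightly more streamlined alternative, which avoids breaking both representations into irreducibles, is to note that $\overline{\chi(h)} = \chi(h^{-1})$ for any character of a finite group, from which $\langle\chi,\chi'\rangle_H = \langle\chi',\chi\rangle_H$ follows directly by the substitution $h \mapsto h^{-1}$. But that is a stylistic preference, not a gap; your argument is complete as written.
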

\begin{corollary}
\emph{\cite{Serre96}} Let $\phi$ be an irreducible of $G$ and $\rho$ be an irreducible of $H$.  Then the the multiplicity of $\phi$ in $ \rho \uparrow^G_H $ equals the multiplicity of $\rho$ in $\phi \downarrow^G_H $.
\end{corollary}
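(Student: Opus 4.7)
The plan is to obtain this as an immediate two-step consequence of the two results that have just been stated and invoked in the preceding discussion, namely Lemma~\ref{lem:mults} (which translates multiplicities of irreducibles into inner products of characters) and Frobenius Reciprocity for Characters (which equates a specific pair of such inner products across induction and restriction). No new ingredient is needed; the work is purely to line up the characters correctly.

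First I would apply Lemma~\ref{lem:mults} twice. Since $\phi$ is an irreducible of $G$ and $\rho\uparrow^G_H$ is a representation of $G$, the multiplicity of $\phi$ in $\rho\uparrow^G_H$ is equal to $\langle \chi_{\rho\uparrow^G_H},\chi_\phi\rangle$, where the inner product is taken in $\mathbb{C}[G]$. Similarly, since $\rho$ is an irreducible of $H$ and $\phi\downarrow^G_H$ is a representation of $H$, the multiplicity of $\rho$ in $\phi\downarrow^G_H$ equals $\langle \chi_{\phi\downarrow^G_H},\chi_\rho\rangle$, with the inner product taken in $\mathbb{C}[H]$.

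Second, I would invoke Frobenius Reciprocity for Characters in exactly the form stated above, which asserts
\[
\langle \chi_{\rho\uparrow^G_H},\chi_\phi\rangle \;=\; \langle \chi_{\phi\downarrow^G_H},\chi_\rho\rangle.
\]
Combining this identity with the two character-theoretic expressions for the multiplicities from the previous step yields the claimed equality, completing the proof.

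There is no real obstacle here; the only thing one has to be careful about is that Lemma~\ref{lem:mults} is being applied to \emph{both} sides, which requires that the target irreducible ($\phi$ on one side, $\rho$ on the other) live in the same group as the representation it is being extracted from. That hypothesis is satisfied precisely because induction moves $\rho$ up to $G$ and restriction pulls $\phi$ down to $H$, so the counted multiplicities are well-defined and each is governed by the appropriate character inner product.
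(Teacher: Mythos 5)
Your proof is correct and is precisely the derivation the paper intends: the statement is presented as a corollary of Frobenius Reciprocity, and your two applications of Lemma~\ref{lem:mults} followed by that identity are the standard, essentially unique route.
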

\noindent Finally, the bare essentials of Fourier analysis will be needed for a few calculations. 

Let $f \in \mathbb{C}[G]$ and $\phi$ be an irreducible of $G$.  The \emph{Fourier transform} of $g$ is a matrix-valued function $\hat{f}$ on irreducible representations
\[  \hat{f}(\phi) = \frac{1}{|G|}\sum_{g \in G} f(g)\phi(g).\]
Letting $\hat{G}$ denote the complete set of irreducibles of $G$,  we may write any $f \in \mathbb{C}[G]$ as
\[  f(g) = \frac{1}{|G|}\sum_{\phi \in \hat{G}} (\dim \phi)~\text{Tr}[\phi(g^{-1})\hat{f}(\phi)].\]
Doing calculations in the Fourier basis for arbitrary non-Abelian groups is usually quite difficult, as it requires a very concrete understanding of the group's irreducibles, which the Reverend Alfred Young divined for the symmetric group at the turn of the last century. The next section provides no more than an illustrated glossary of his theory, and we refer the reader to~\cite{MacDonald95,StanleyV201} for first-rate introductions to the subject.

\subsection{Representation Theory of the Symmetric Group}\label{sec:sym}
For the remainder of this section, we let $\lambda$ be an integer partition of $n$.
We may visualize $\lambda$ as a \emph{Ferrers diagram}, a left-justified table of cells that contains $\lambda_i$ cells in the $i$th row. When referencing a Ferrers diagram, we alias $\lambda$ as the \emph{shape}.  For example, the Ferrers diagram below has shape $(5,3,2,1) \vdash 11$:
\begin{center}
\ytableausetup{mathmode,boxsize=1.0em}
\begin{ytableau}
\empty  & \empty &  \empty & \empty & \empty \\
\empty & \empty & \empty \\ 
 \empty & \empty  \\ 
 \empty \\  
\end{ytableau}~.
\end{center}

Let $\lambda' \vdash n$ denote the \emph{transpose} of $\lambda$, that is, the partition obtained by interchanging the columns and the rows of the corresponding Ferrers diagram of $\lambda$.  The transpose of $(5,3,2,1) \vdash 11$ is $(4,3,2,1,1) \vdash 11$, as illustrated below:
\begin{center}
\ytableausetup{mathmode,boxsize=1.0em}
\begin{ytableau}
\empty  & \empty &  \empty & \empty \\
\empty & \empty & \empty \\ 
 \empty & \empty  \\ 
 \empty \\  
  \empty \\ 
\end{ytableau}~.
\end{center}
Let $\ell(\lambda)$ denote the \emph{length} of $\lambda$, that is, the number of rows in its Ferrers diagram. 
We say that $\lambda$ \emph{covers} $\mu$ if $\mu_i \leq \lambda_i$ for each $i \in [\ell(\mu)]$.  If $\lambda$ and $\mu$ are two partitions such that $\lambda$ covers $\mu$, then we obtain the \emph{skew shape} \emph{$\lambda/\mu$} by removing the cells corresponding to $\mu$ from $\lambda$.  For instance, the shape $(5,3,2,1)$ covers $(2,2,1)$, so we may consider the skew shape $(5,3,2,1)/(2,2,1)$:
\begin{center}
\quad\quad\quad\quad\quad\quad\quad\quad\quad\quad\quad\quad
\ytableausetup{mathmode,boxsize=1.0em}
\begin{ytableau}
\times  & \times &  \empty & \empty & \empty \\
\times & \times & \empty\\ 
 \times & \empty  \\ 
 \empty \\  
\end{ytableau}
\hfill
\ytableausetup{mathmode,boxsize=1.0em}
\begin{ytableau}
\none & \none &  \empty & \empty & \empty \\
\none & \none & \empty\\ 
 \none & \empty  \\ 
 \empty \\  
\end{ytableau}~.
\quad\quad\quad\quad\quad\quad\quad\quad\quad\quad\quad\quad
\end{center}
A skew shape is a \emph{horizontal strip} if each column has no more than one cell.  For example, the skew shape $(5,3,2,1)/(3,2,1)$ is a horizontal strip.

The only order on the set of partitions $\lambda(n)$ considered in this work is the \emph{reverse-lexicographical order} $(\lambda(n),\leq)$, defined such that $\mu \leq \lambda$ if and only if $\mu_j < \lambda_j$ where $j$ is the first index where $\mu$ and $\lambda$ differ, or $\mu = \lambda$.  For $n = 4$, we have
\[
\ytableausetup{mathmode,boxsize=0.75em}
\begin{ytableau}
\empty & \empty & \empty & \empty\\
\end{ytableau}
\quad\quad
 >
\quad\quad
\begin{ytableau}
\empty & \empty & \empty\\
\empty \\
\end{ytableau}
\quad\quad
>
\quad\quad
\begin{ytableau}
\empty & \empty \\
\empty & \empty \\
\end{ytableau}
\quad\quad
>
\quad\quad
\begin{ytableau}
\empty & \empty \\
\empty \\
\empty\\
\end{ytableau}
\quad\quad
>
\quad\quad
\begin{ytableau}
\empty \\
\empty \\
\empty\\
\empty\\
\end{ytableau}~.
\]
The following is an interlacing sequence of definitions and examples concerning \emph{Young tableaux}, which we simply refer to as \emph{tableaux} (not to be irreverent to the Reverend). 

A \emph{generalized $\lambda$-tableau} $T$ is a set of $n$ cells arranged in $k$ left-justified ordered rows such that ordered row $i$ has $\lambda_i$ cells and each cell is labeled with a number of $[n]$. 

\begin{center}
\ytableausetup{mathmode,boxsize=1.2em}
\begin{ytableau}
1  & 9 & 7 & 3 & 1 \\
8 & 6 & 6\\ 
5 & 2  \\ 
2 \\  
\end{ytableau}
\end{center}

A \emph{$\lambda$-tableau} $T$ is a set of $n$ cells arranged in $k$ left-justified ordered rows such that ordered row $i$ has $\lambda_i$ cells and each cell is assigned a unique number of $[n]$. 

\begin{center}
\ytableausetup{mathmode,boxsize=1.2em}
\begin{ytableau}
1  & 2 & 7 & 8 & 9 \\
4 & 10 & 5\\ 
3 & 6  \\ 
11 \\  
\end{ytableau}
\end{center}

A \emph{standard $\lambda$-tableau} $T_\lambda$ is a $\lambda$-tableau with entries strictly increasing along  rows and strictly increasing along columns.

\begin{center}
\ytableausetup{mathmode,boxsize=1.2em}
\begin{ytableau}
1 & 2 & 5 & 8 & 9 \\
3 & 6 & 7\\ 
4 & 10  \\ 
11 \\  
\end{ytableau}
\end{center}

 A \emph{semistandard $\lambda$-tableau} $T_\lambda$ is a generalized $\lambda$-tableau where the numbers are weakly increasing along the rows and strictly increasing along the columns. The multiplicity of each number is its \emph{weight}, and the weights of $1,2,\cdots,n$ are recorded as a $n$-tuple $\mu = (\mu_1, \mu_2, \cdots , \mu_k)$ where $\mu_i$ is the number of times that the number $i$ occurs.  Below is a semistandard $(5,3,2,1)$-tableau with weight $(1,2,4,3,1)$.
\[ 
\ytableausetup{mathmode,boxsize=1.2em}
\begin{ytableau}
1 & 2 & 3 & 3 & 3 \\
2 & 3 & 4\\ 
4 & 4 \\ 
5 \\  
\end{ytableau}
\]
A \emph{semistandard $\lambda/\mu$-tableau} is a generalized tableau of skew shape $\lambda/\mu$ where the numbers are weakly increasing along rows and strictly increasing along columns.

Let $f^\lambda$ denote the number of standard $\lambda$-tableaux, and let $K_{\lambda,\mu}$ denote the number of semistandard tableaux of shape $\lambda$ and weight $\mu$.  The $K_{\lambda,\mu}$'s are called the \emph{Kostka numbers} and we will meet a generalization of them in Section~\ref{sec:char}.
We may count the number of standard Young tableau of shape $\lambda$ elegantly as follows.

Let $s \in \lambda$ be a cell of a Ferrers diagram of shape $\lambda$. Define $h_\lambda(s) := a_{\lambda}(s) + l_{\lambda}(s) + 1$ where $l_{\lambda}(s)$ is the number of cells on the same column below $s$ and $a_{\lambda}(s)$ is the number of cells on the same row to the right of $s$. Denote the \emph{hook product} as $h(\lambda) := \prod_{s \in T}  h_{\lambda}(s)$.

\begin{theorem}[The Hook Rule \cite{MacDonald95}]$f^\lambda = n!/h(\lambda)$.
\end{theorem}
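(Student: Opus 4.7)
The plan is to prove the hook rule by induction on $n$, reducing the claim to a probabilistic identity established via the \emph{hook walk} of Greene, Nijenhuis, and Wilf. Any standard $\lambda$-tableau must place the entry $n$ in some corner cell $c$ (a cell with no cell below it in its column and no cell to its right in its row), and deleting $c$ yields a standard tableau of the partition $\mu_c := \lambda - c$; conversely, inserting $n$ at any corner of $\mu_c$ extends a tableau of $\mu_c$ to one of $\lambda$. This gives the recursion $f^\lambda = \sum_{c} f^{\mu_c}$ over corners of $\lambda$, and by the inductive hypothesis the theorem reduces to the identity
\[ \sum_{c \text{ corner}} \frac{h(\lambda)}{n \cdot h(\mu_c)} = 1. \]
Writing $p_c := h(\lambda)/(n \cdot h(\mu_c))$, it suffices to exhibit a probability distribution on the corners of $\lambda$ whose mass at $c$ is exactly $p_c$.

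To this end I would define the hook walk on $\lambda$: pick $s_0$ uniformly from the $n$ cells of $\lambda$; given $s_k = (i,j)$ not a corner, pick $s_{k+1}$ uniformly from the $h_\lambda(s_k) - 1$ other cells of the hook based at $s_k$ (cells strictly below $s_k$ in column $j$ or strictly to the right in row $i$); terminate when a corner is reached. Since each transition strictly increases either the row or the column index, the walk terminates almost surely at a single corner. The endpoint computation proceeds by conditioning on the sets of distinct row and column coordinates visited along the walk. A careful case analysis shows that the conditional transition probabilities decouple into a row contribution depending only on the columns visited and a column contribution depending only on the rows visited; summing over intermediate configurations, the resulting sums telescope to a closed product form in the arm and leg hook lengths along row $a$ and column $b$ of $c = (a,b)$.

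The final simplification invokes the relation $h_{\mu_c}(s) = h_\lambda(s) - 1$ for cells $s$ lying in row $a$ or column $b$ of $\mu_c$ and $h_{\mu_c}(s) = h_\lambda(s)$ otherwise, from which the endpoint probability is immediately identified with $p_c$, completing the induction. The main obstacle is the decoupling and telescoping step, which requires a delicate inductive analysis of the conditional walk probabilities to avoid a combinatorial explosion over the intermediate configurations of visited rows and columns. An alternative route more aligned with the representation-theoretic material developed later in this paper would be to start from the Frobenius determinantal formula $f^\lambda = n! \det\bigl(1/(\lambda_i - i + j)!\bigr)_{i,j}$ and reduce to a Vandermonde evaluation in the shifted parts $\lambda_i + \ell(\lambda) - i$; this is algebraically cleaner but depends on the Frobenius formula as a prerequisite.
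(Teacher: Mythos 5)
The paper does not prove this theorem --- it is cited from MacDonald as a classical result, and no proof appears in the text --- so there is no argument of the paper's to compare against. Your proposal is the Greene--Nijenhuis--Wilf hook walk proof, which is a correct approach: the deletion recursion $f^\lambda = \sum_c f^{\mu_c}$ over corners is right, the reduction to the identity $\sum_c p_c = 1$ with $p_c = h(\lambda)/(n\,h(\mu_c))$ is right, and both the hook walk and the relation $h_{\mu_c}(s) = h_\lambda(s) - 1$ on the removed row and column (with $h_{\mu_c}(s) = h_\lambda(s)$ elsewhere) are set up correctly. The genuine gap is that you never carry out the computation showing the walk lands at a given corner $c$ with probability exactly $p_c$; the ``decoupling and telescoping over visited rows and columns'' is where essentially all the work of the GNW argument resides, and you describe its shape without executing it, so what you have is a correct proof skeleton with the central lemma flagged but left open. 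The Frobenius-determinant alternative you mention is closer to MacDonald's own treatment, but it trades the hook walk's elementary character for a dependence on the Frobenius formula, itself a nontrivial piece of representation theory.
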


It turns out that the \emph{tabloids} are actually a reliable source of information for the representation theory of symmetric group. A \emph{$\lambda$-tabloid} $\{T\}$ is a collection of $n$ cells, arranged in $k$ left-justified unordered rows, such that unordered row $i$ has $\lambda_i$ cells and each of the $n$ cells is labeled by a unique number of $[n]$.  To emphasize the lack of order along the rows, we draw them like so
\begin{center}
\ytableausetup{mathmode,boxsize=1.2em,tabloids = true}
\begin{ytableau}
1 & 2 & 7 & 8 & 9 \\
4 & 10 & 5\\ 
3 & 6  \\ 
11 \\  
\end{ytableau}~.
\end{center}

\noindent The notation $\{T\}$ suggests that each $\lambda$-tabloid is an equivalence class of $\lambda$-tableaux, which is indeed the case. For any $\lambda$-tableau $T$, let $\{T\}$ be the tabloid that $T$ belongs to. Let $\mathbb{T}_\lambda$ denote the set of all $\lambda$-tabloids. Note that $S_n$ acts on the entries of the cells of $\lambda$-tableaux and $\lambda$-tabloids in the obvious way. 

For any $\lambda$-tableau $T$, let $C_T \leq S_n$ be the \emph{column-stabilizer of T}, i.e., the permutation group that fixes the columns of $T$ setwise. In particular, if $T$ has shape $\lambda$, then we may write the column-stabilizer as $C_T \cong S_{(\lambda')_1} \times S_{(\lambda')_2} \times \cdots \times S_{(\lambda')_{\lambda_1}}$ where $S_{(\lambda')_j}$ acts on the row indices of the $j$th column of $T$, that is,
\[  \sigma t_{i,j} = t_{\sigma(i),j}   \]
for all $\sigma \in S_{(\lambda')_j}$ and cells $t_{i,j} \in T$ in the $j$th column of $T$. Having $C_T$ act on the indices of cells rather than their entries will be useful for a few calculations in that arise in the proof of a key lemma.

Let $(\phi, \mathbb{R}[\mathbb{T}_\lambda])$ be the permutation representation of $S_n$ acting on $\mathbb{R}[\mathbb{T}_\lambda]$ with the standard basis $\{e_{\{T\}} : \{T\} \in \mathbb{T}_\lambda \}$ in the natural way. We briefly discuss how this representation decomposes into irreducibles, as several objects therein will resurface in Section~\ref{sec:combchar}.

For each $\lambda$-tableau $T$, define the \emph{T-polytabloid} to be the following ($\pm 1$)-linear combination of $\lambda$-tabloids 
$$e_T :=  \sum_{\pi \in C_T} \text{sign}(\pi) e_{\{\pi T\}}.$$  
To give an example, if we let
\[
T = 
\ytableausetup{mathmode,boxsize=1.2em,tabloids=false}
\begin{ytableau}
1 & 2 & 3 & 6 & 7 \\
4 & 5\\ 
\end{ytableau}~,\]
then its polytabloid is just
\[
e_T = 
\ytableausetup{mathmode,boxsize=1.2em,tabloids=true}
\begin{ytableau}
1 & 2 & 3 & 6 & 7 \\
4 & 5\\ 
\end{ytableau}
-
\begin{ytableau}
4 & 2 & 3 & 6 & 7 \\
1 & 5\\ 
\end{ytableau}
-
\begin{ytableau}
1 & 5 & 3 & 6 & 7 \\
4 & 2\\ 
\end{ytableau}
+
\begin{ytableau}
4 & 5 & 3 & 6 & 7 \\
1 & 2\\ 
\end{ytableau}~.
\]
It is clear that for each $\lambda \vdash n$ the permutation representation of $S_{n}$ acting on the set 
$$\{ e_T : T \text{ is a }\lambda\text{-tableau}\}$$ 
is a subrepresentation of $(\phi, \mathbb{R}[\mathbb{T}_\lambda])$. Specht showed this is in fact an \emph{irreducible} of $S_n$ and that $\{ e_T : T \text{ is a standard }\lambda\text{-tableau}\}$ forms a basis for 
$$S^\lambda := \text{Span}\{ e_T : T \text{ is a }\lambda\text{-tableau}\}.$$ 
It follows that $S^\lambda$ has dimension $f^\lambda$, which we state as a theorem. 
\begin{theorem}[Dimension Formula~\cite{MacDonald95}]
$\dim S^\lambda = f^\lambda$.
\end{theorem}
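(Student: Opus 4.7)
The plan is to show that the set $B := \{e_T : T \text{ is a standard } \lambda\text{-tableau}\}$ is a basis for $S^\lambda$; since $|B| = f^\lambda$ by definition, this implies $\dim S^\lambda = f^\lambda$. By construction, $S^\lambda$ is spanned by all polytabloids $e_T$ (standard or not), so the task splits into two parts: (i) establishing that $B$ is linearly independent, and (ii) showing that every polytabloid $e_T$ can be rewritten as a $\mathbb{Z}$-linear combination of standard polytabloids.

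For linear independence, I would introduce a total order on the set of $\lambda$-tabloids that is adapted to the structure of polytabloids. Concretely, for each $\lambda$-tabloid $\{T\}$ and each $i \in [n]$, let $r_{\{T\}}(i)$ denote the row in which $i$ appears, and order tabloids by the reverse-lexicographic order on the vector $(r_{\{T\}}(n),r_{\{T\}}(n-1),\dots,r_{\{T\}}(1))$. For a standard tableau $T$, one checks that every nontrivial column permutation $\pi \in C_T$ moves some entry $i$ strictly downward in the tableau (since $T$ is column-strict), so the tabloid $\{\pi T\}$ is strictly smaller than $\{T\}$ in this order. Hence $\{T\}$ is the unique maximal tabloid appearing in $e_T$, with coefficient $+1$. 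Distinct standard tableaux $T \neq T'$ induce distinct tabloids $\{T\} \neq \{T'\}$, so the leading terms of the $e_T$ for $T \in B$ are all distinct, which forces linear independence by the usual triangular argument.

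For spanning, I need a \emph{straightening algorithm} that expresses a non-standard polytabloid in terms of standard ones. The classical tool here is the \emph{Garnir relation}: for a pair of adjacent columns $j < j+1$ of $T$ and any subset of entries of column $j+1$ together with entries of column $j$ whose total size exceeds the column length, the symmetrized sum (with signs) of $e_{\sigma T}$ over coset representatives vanishes. Using such a Garnir relation at a column-descent of $T$ (an entry greater than the one below it), one rewrites $e_T$ as a sum $\sum c_{T'} e_{T'}$ where each $T'$ is strictly smaller than $T$ in the order above. Iterating, the process terminates at a combination of standard polytabloids.

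The main obstacle is the straightening step: one has to verify that the Garnir relations actually produce valid identities in $S^\lambda$ (i.e., that the corresponding element of the group algebra annihilates $e_T$), and that the induction measure strictly decreases. This is proved by exhibiting, for the relevant Garnir subgroup $G \le S_n$, a sign-alternating element $\sum_{\sigma \in G}\mathrm{sign}(\sigma)\sigma$ that lies in the annihilator of $e_T$ because the pigeonhole principle forces two of the selected entries into the same column of some $\sigma T$, whereupon the column-antisymmetrization of $e_T$ under a transposition of those entries kills the sum. Once the Garnir identity is in hand, the ordering on tabloids established in the independence step doubles as the termination measure for the algorithm, and the theorem follows.
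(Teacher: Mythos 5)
The paper states this theorem without proof: it cites MacDonald and notes that Specht showed the standard polytabloids $\{e_T : T \text{ standard}\}$ form a basis for $S^\lambda$, whence $\dim S^\lambda$ equals the number of standard $\lambda$-tableaux, $f^\lambda$. Your proposal supplies exactly the content of Specht's theorem that the paper defers to the reference --- independence of the standard polytabloids via a leading-term triangularity argument, and spanning via Garnir straightening --- so you are taking the same route, only expanding what the paper cites.

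One step in the independence argument is stated too loosely, though. You write that for standard $T$ and nontrivial $\pi \in C_T$, ``$\pi$ moves some entry strictly downward, so the tabloid $\{\pi T\}$ is strictly smaller than $\{T\}$.'' But the mere existence of \emph{some} entry moving downward does not determine the sign of the first disagreement in your comparison vector $(r_{\{T\}}(n),\dots,r_{\{T\}}(1))$: it is entirely possible (indeed typical) for a large entry to move up while a small entry moves down, and then the lexicographic outcome depends on which end of the vector controls the comparison. The fact you actually need is that the \emph{extremal} moved entry has a forced direction. Because $T$ is column-strict, the largest entry moved by $\pi$ must move up --- if it moved down, the entry it displaces would be strictly larger and also moved, contradicting maximality --- and dually the smallest moved entry must move down. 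Pinning down which extremal entry governs the first disagreement, and matching that to the orientation of your chosen order, is what makes $\{T\}$ the unique extreme tabloid appearing in $e_T$ and makes the triangularity argument go through. The rest of the outline --- the Garnir identity, its pigeonhole justification that some $\sigma T$ repeats a column entry and is therefore annihilated by a column transposition, and termination of straightening under the same order --- is correct and standard.
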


Since two elements are conjugate in $S_n$ if and only if they have the same cycle type, each irreducible identifies with an integer partition of $n$.  Throughout this work, we frequently abuse notation by letting $\lambda$ refer to $S^\lambda$, which should not result in any confusion.

We end this section with some non-standard combinatorial terminology for tableaux and a few bounds on the dimensions of irreducibles of the symmetric group. Recall that 
$$2\lambda := (2\lambda_1, 2\lambda_2, \cdots ,2\lambda_k) \vdash 2n$$
denotes an even partition of $2n$, and that $\lambda(n)$ denotes the set of all partitions of $n$. Let $2\lambda(n)$ be the set of all even partitions of $\lambda(2n)$.

Following Ellis, Friedgut, and Pilpel~\cite{EllisFP11}, for any $t < n/2$, we say that $\lambda \vdash n$ is \emph{fat} if $\lambda \geq (n-t,1^t)$, \emph{tall} if $\lambda' \geq (n-1,1^t)$, or \emph{medium} otherwise. For any $t < n/2$, we say that an even partition $\lambda \vdash 2n$ is \emph{fat} if $\lambda \geq 2(n-t,1^t)$, and \emph{non-fat} otherwise. It is easy to see that there are no tall even partitions for $t < n/2$.
\begin{proposition}
For any $t < n/2$, there is no $\mu \in 2\lambda(n)$ such that $\mu'$ is fat.
\end{proposition}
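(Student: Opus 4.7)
The plan is to derive a contradiction from two elementary observations: that an even partition $\mu \vdash 2n$ has at most $n$ parts, and that the hypothesis $\mu' \geq 2(n-t,1^t)$ forces $\mu'_1 \geq 2(n-t)$. First I would unpack the benchmark partition: $2(n-t,1^t)$ equals $(2(n-t),2,2,\ldots,2)$ with $t$ copies of $2$, whose first part is $2(n-t)$. Under the reverse-lexicographic order defined in the excerpt, $\mu' \geq (2(n-t),2^t)$ forces $\mu'_1 \geq 2(n-t)$ in every case, whether $\mu'$ equals the benchmark (so $\mu'_1 = 2(n-t)$), already exceeds it at the first coordinate (so $\mu'_1 > 2(n-t)$), or first differs at some later coordinate (so once again $\mu'_1 = 2(n-t)$).

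Next, I would invoke the standard identity $\mu'_1 = \ell(\mu)$, since the first part of the transpose counts the nonzero parts of $\mu$. Because $\mu \in 2\lambda(n)$ is an even partition of $2n$, every part is a positive even integer and hence at least $2$, yielding $2n = |\mu| \geq 2\ell(\mu)$, i.e., $\ell(\mu) \leq n$.

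Combining these inequalities gives $n \geq \ell(\mu) = \mu'_1 \geq 2(n-t)$, which rearranges to $t \geq n/2$ and contradicts the standing assumption $t < n/2$. There is no serious obstacle here; the proposition reduces to a size-versus-length incompatibility once the evenness of $\mu$ is brought to bear against the assertion that its conjugate is fat.
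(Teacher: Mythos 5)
Your argument is correct. The paper states this proposition without proof, calling it an easy observation, and your verification is precisely the natural one: fatness of $\mu'$ under the reverse-lexicographic order forces $\mu'_1 \geq 2(n-t)$, the identity $\mu'_1 = \ell(\mu)$ converts this to a length bound, evenness of $\mu \vdash 2n$ gives $\ell(\mu) \leq n$, and combining yields $t \geq n/2$, contradicting the hypothesis.
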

Another simple fact is that for any $t < n/2$, the number of fat partitions, which we denote as $F_t$, does not depend on $n$.
\begin{proposition}
For any $t < n/2$, the number of fat partitions of $\lambda(n)$ is a constant $F_t$ depending only on $t$, and $F_t$ is also equal to the number of fat partitions of $2\lambda(n)$.
\end{proposition}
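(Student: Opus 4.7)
The plan is to reduce ``fat'' to a simple condition on the first part of $\lambda$, which simultaneously supplies a bijection between fat partitions of $\lambda(n)$ and fat partitions of $2\lambda(n)$ and reduces the count to a sum of unconstrained partition numbers.

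First I would unpack the reverse-lex inequality $\lambda \geq (n-t,1^t)$ coordinate by coordinate. If $\lambda_1 > n-t$ the first differing coordinate is $j=1$ and $\lambda > (n-t,1^t)$, while if $\lambda_1 < n-t$ we immediately obtain $\lambda < (n-t,1^t)$. In the boundary case $\lambda_1 = n-t$ (with $t \geq 1$), the remaining weight $t$ forces $\lambda_2 \geq 1$; continuing the comparison, either $\lambda_j > 1$ for some $j \leq t+1$ (so $\lambda > (n-t,1^t)$) or $\lambda = (n-t,1^t)$ exactly, since the tail $(\lambda_2, \lambda_3, \ldots)$ must then be a partition of $t$ with every part equal to $1$. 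Hence $\lambda \vdash n$ is fat if and only if $\lambda_1 \geq n-t$. The identical argument applied to $2\lambda \vdash 2n$ and the benchmark $2(n-t,1^t) = (2(n-t),2^t)$ shows that $2\lambda$ is fat if and only if $2\lambda_1 \geq 2(n-t)$, equivalently $\lambda_1 \geq n-t$. Thus the map $\lambda \mapsto 2\lambda$ restricts to a bijection between fat elements of $\lambda(n)$ and fat elements of $2\lambda(n)$, so the two cardinalities agree.

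Next I would count the fat partitions of $\lambda(n)$ directly. Writing $\lambda_1 = n-s$ for $s \in \{0,1,\ldots,t\}$, the tail $(\lambda_2, \lambda_3, \ldots)$ is a partition of $s$ whose parts are at most $n-s$. Since $t < n/2$ and $s \leq t$, we have $n - s \geq n - t > t \geq s$, so every partition of $s$ has all parts bounded by $s < n-s$ automatically; the part-size constraint is vacuous. Consequently
\[ F_t = \sum_{s=0}^{t} p(s), \]
where $p(s)$ denotes the classical partition number, and this quantity depends only on $t$.

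The only potential obstacle is a careful case analysis of the reverse-lex comparison, which is entirely combinatorial and straightforward; the hypothesis $t < n/2$ enters solely to ensure the tail constraint $\lambda_i \leq n-s$ is vacuous, so that $F_t$ is genuinely $n$-independent.
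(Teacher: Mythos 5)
Your proof is correct. The paper states this proposition without proof (it is presented as ``another simple fact''), so there is no official argument to compare against, but your characterization of the reverse-lex condition $\lambda \geq (n-t,1^t)$ as simply $\lambda_1 \geq n-t$ is exactly the right reduction: it immediately gives the bijection $\lambda \mapsto 2\lambda$ between fat elements of $\lambda(n)$ and of $2\lambda(n)$, and the observation that $t<n/2$ makes the dominance constraint $\lambda_2\le \lambda_1$ vacuous yields $F_t=\sum_{s=0}^t p(s)$, which is indeed independent of $n$.
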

Lemma~\ref{lem:medDims} is a lower bound on the dimension of non-fat irreducibles that follows immediately from the proof of~\cite[Lemma 2]{EllisFP11}.\footnote{To avoid confusion, we note a typo in the statement of \cite[Lemma 2]{EllisFP11}: ``... greater than $n-k$ ..." should be ``... less than $n-k$ ...".}
\begin{lemma}\label{lem:medDims}
\emph{\cite{EllisFP11}}
For any $t \in \mathbb{N}$, there exists a constant $C_t > 0$ depending only on $t$ such that $\dim 2\lambda \geq C_t (2n)^{2(t+1)}$ for any non-fat even partition $2\lambda \vdash 2n$.
\end{lemma}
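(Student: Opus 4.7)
The plan is to apply the hook length formula and reduce the claim to \cite[Lemma~2]{EllisFP11}. First I would note that non-fatness of $2\lambda$ gives $2\lambda_1 \leq 2(n-t-1) = 2n - 2(t+1)$, so the first row of $2\lambda$ has length at most $N - 2(t+1)$ for $N := 2n$. The preceding proposition together with the bound $(2\lambda)'_1 = \ell(2\lambda) \leq n$---which holds because every part of an even partition of $2n$ has size at least $2$---then gives $(2\lambda)'_1 \leq n \leq N - 2(t+1)$ whenever $n \geq 2(t+1)$, a condition subsumed by the standing assumption $t < n/2$. Thus, regarded simply as a partition of $N$, the shape $2\lambda$ is neither fat nor tall in the sense of \cite{EllisFP11} with parameter at least $2t+1$.

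I would then apply the hook length formula $\dim 2\lambda = N! / \prod_{s \in 2\lambda} h(s)$ and bound the hook product from above. The content of \cite[Lemma~2]{EllisFP11} is precisely that for any partition $\mu \vdash N$ with $\mu_1, \mu'_1 \leq N - 2(t+1)$, one has $\prod_{s \in \mu} h(s) \leq N! / (C_t N^{2(t+1)})$ for a positive constant $C_t$ depending only on $t$. Their argument isolates the top-left $(t+1) \times (t+1)$ bulk of the Young diagram and estimates the arm and leg lengths of the cells lying outside of it. Applying this bound with $\mu = 2\lambda$ and $N = 2n$ yields $\dim 2\lambda \geq C_t (2n)^{2(t+1)}$, which is exactly the claim.

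The main (and essentially only) obstacle is the parameter-matching between the EFP setting of arbitrary partitions of $n$ and our setting of even partitions of $2n$. This turns out to be a non-issue because the EFP estimate is phrased entirely in terms of arm and leg lengths of individual cells, quantities that are insensitive to any parity constraint on the parts of $\mu$. Consequently, no essentially new ideas are needed, and the bound follows immediately from their argument after the substitution $n \mapsto 2n$.
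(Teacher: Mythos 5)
Your proposal is correct and takes essentially the same route as the paper, which simply cites the proof of \cite[Lemma~2]{EllisFP11}. You have usefully spelled out the parameter bookkeeping that the paper leaves implicit: non-fatness gives $2\lambda_1 \leq 2n - 2(t+1)$, the even-part constraint gives $\ell(2\lambda)\leq n \leq 2n - 2(t+1)$, and the EFP hook-product estimate is insensitive to the parity of the parts, so the substitution $N = 2n$ with EFP parameter $2t+1$ immediately yields the claim. One small quibble: the condition $n \geq 2(t+1)$ is not literally implied by $t < n/2$ when $n$ is odd (there $t < n/2$ only gives $n \geq 2t+1$), but since $t$ is fixed and the lemma is only needed for $n$ sufficiently large --- and the constant $C_t$ can always be shrunk to absorb the finitely many remaining cases --- this does not affect the argument.
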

We conclude this section with a classical upper bound on the dimension of an irreducible of the symmetric group.
\begin{theorem}\label{thm:dimbound}
\emph{\cite{Diaconis88}} For any irreducible $\lambda$ of $S_n$, we have $\dim \lambda \leq \binom{n}{\lambda_1} \sqrt{(n-\lambda_1)!}$. 
\end{theorem}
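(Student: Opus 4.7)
The plan is to bound $\dim \lambda = f^\lambda$ (using the Dimension Formula already stated) directly via the combinatorics of standard Young tableaux, splitting the construction of a standard $\lambda$-tableau into two essentially independent choices: the contents of the first row, and the filling of the remaining rows.

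For the first step, I would observe that the first row of a standard $\lambda$-tableau is a strictly increasing sequence of $\lambda_1$ entries drawn from $[n]$, so specifying which $\lambda_1$-subset of $[n]$ appears in row $1$ pins down the entire first row; there are thus at most $\binom{n}{\lambda_1}$ choices. Once the first row has been fixed, the remaining $n-\lambda_1$ entries fill the shape $\mu := (\lambda_2,\lambda_3,\ldots) \vdash (n-\lambda_1)$ and must form a standard $\mu$-tableau (relative to the induced ordering on the remaining entries); discarding the extra constraint that the row-$2$ entries must exceed their row-$1$ neighbours can only inflate the count, so the number of completions is at most $f^\mu$. This yields $f^\lambda \leq \binom{n}{\lambda_1}\, f^\mu$.

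For the second step, I would invoke the decomposition of the regular representation already mentioned in the excerpt, namely $\mathbb{C}[G]\cong\bigoplus_i (\dim V_i)V_i$. Taking dimensions for $G=S_m$ gives the standard identity
\[
m! \;=\; \sum_{\nu \vdash m}(f^\nu)^2,
\]
so that each individual term satisfies $(f^\nu)^2 \leq m!$, and consequently $f^\nu \leq \sqrt{m!}$. Applied with $m = n-\lambda_1$ and $\nu = \mu$, this gives $f^\mu \leq \sqrt{(n-\lambda_1)!}$. Combining the two estimates yields $\dim \lambda \leq \binom{n}{\lambda_1}\sqrt{(n-\lambda_1)!}$, as required. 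I do not anticipate a real obstacle: the only mild subtlety is that the first-row decomposition gives an inequality rather than an equality, but this is immediate since the column-increasing condition at the boundary between rows $1$ and $2$ is only discarded, never violated.
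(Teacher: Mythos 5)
Your proof is correct, and it is essentially the same argument as the one in Diaconis's text that the paper cites: bound $f^\lambda$ by $\binom{n}{\lambda_1}$ choices for the first-row contents times $f^\mu$ for the shape $\mu=(\lambda_2,\lambda_3,\dots)\vdash n-\lambda_1$ below, then use $\sum_{\nu\vdash m}(f^\nu)^2=m!$ to get $f^\mu\le\sqrt{(n-\lambda_1)!}$. Both steps are carried out cleanly, so nothing further is needed.
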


\section{Association Schemes and Gelfand Pairs}\label{sec:assoc}

The theory of association schemes will be a convenient suitcase for packing the algebraic and combinatorial components of our work. We refer the reader to Bannai and Ito's reference~\cite{BannaiI84} or Godsil and Meagher's manuscript~\cite{GodsilMeagher} for a more thorough treatment.

A \emph{symmetric association scheme} is a collection of $m+1$ binary $|X| \times |X|$ matrices $A_0,A_1,\cdots,A_m$ over a set $X$ that satisfy the following axioms:
\begin{enumerate}
\item $A_i$ is symmetric for all $0 \leq i \leq m$,
\item $A_0 = I$ where $I$ is the identity matrix,
\item $\sum_{i=0}^m A_i = J$ where $J$ is the all-ones matrix, and 
\item $A_iA_j  = A_jA_i  \in \text{Span}\{A_0, A_1,\cdots, A_m\}$ for all $0 \leq i,j \leq m$.
\end{enumerate}
\noindent From a combinatorial perspective, the \emph{associates} $A_1, \cdots , A_m$ are in fact adjacency matrices of regular spanning subgraphs of $K_{|X|}$ that partition $E(K_{|X|})$ subject to other regularity conditions that need not concern us. 

The matrix algebra generated by the identity matrix and its associates is the association scheme's \emph{Bose-Mesner algebra}, and these matrices form a basis for this algebra. What gives symmetric association schemes their charm is that their Bose-Mesner algebras afford a canonical dual basis of \emph{primitive idempotents}, positive semi-definite matrices $E_0,E_1,\cdots,E_m$ that are pairwise-orthogonal and satisfy $\sum_{i=0}^m E_i = I$.  In particular, the $i$th primitive idempotent is the projector corresponding to the $i$th eigenspace of any matrix in the Bose-Mesner algebra.  

Any zero-diagonal matrix that lies in the Bose-Mesner algebra of a symmetric association scheme is a pseudo-adjacency matrix of a regular graph that satisfies the hypotheses of the ratio bound. However, for this bound to be useful, one must have some understanding of the eigenvalues of the associates that compose it.  Such information is recorded in the association scheme's  \emph{character table}, a $(m+1) \times (m+1)$ matrix $P$ whose rows are indexed by eigenspaces, columns indexed by associates, and defined such that $P_{ij}$ is the eigenvalue of the $i$th eigenspace of $A_j$.  

For symmetric association schemes that arise from groups, the entries of $P$ can be described in terms of the spherical functions of a finite symmetric \emph{Gelfand pair}~\cite{BannaiI84}.  
\begin{theorem}\label{thm:gelfandPair}
\emph{\cite{MacDonald95}} Let $K \leq G$ be a group.  Then the following are equivalent.
\begin{enumerate}
\item $(G,K)$ is a Gelfand Pair;
\item The induced representation $1 \uparrow_K^G  \cong \bigoplus_{i=1}^m V_i$ (equivalently, the permutation representation of $G$ acting on $G/K$) is multiplicity-free;
\item The double-coset algebra $\mathbb{C}[K \backslash G /K]$ is commutative.
\end{enumerate}
Moreover, a Gelfand pair is symmetric if $KgK = Kg^{-1}K$ for all $g \in G$.
\end{theorem}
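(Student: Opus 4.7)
The plan is to reduce the theorem to a single structural isomorphism and then apply Schur's lemma. Since a Gelfand pair is typically \emph{defined} by either (2) or (3) (the choice being a matter of textbook convention), the genuine content of the equivalence is (2) $\Leftrightarrow$ (3). The bridge I would build is the algebra isomorphism
\[ \mathbb{C}[K \backslash G / K] \cong \mathrm{End}_G\bigl(\mathbb{C}[G/K]\bigr), \]
where the right-hand side denotes the algebra of $G$-equivariant linear endomorphisms under composition and the left-hand side is equipped with convolution.

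To construct this isomorphism, first I would note that a $G$-equivariant endomorphism $T$ of $\mathbb{C}[G/K]$ is entirely determined by its value on the distinguished coset vector $e_{K}$, and that $T(e_K)$ must be $K$-invariant because $e_K$ is. The space $\mathbb{C}[G/K]^K$ of $K$-invariants has a basis consisting of indicators of double cosets $KgK$ projected to $G/K$, giving a canonical bijection with $\mathbb{C}[K \backslash G /K]$. A direct computation shows that the composition $T_{f_1} \circ T_{f_2}$ of the associated convolution operators corresponds to $f_1 * f_2$, so this bijection is an algebra isomorphism.

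With this in hand, I would invoke Schur's lemma. Writing $\mathbb{C}[G/K] \cong \bigoplus_i m_i V_i$ as a direct sum of inequivalent irreducibles $V_i$ with multiplicities $m_i$, Schur's lemma yields
\[ \mathrm{End}_G\bigl(\mathbb{C}[G/K]\bigr) \;\cong\; \bigoplus_i M_{m_i}(\mathbb{C}). \]
A direct sum of matrix algebras is commutative if and only if every block is $1\times 1$, i.e.\ every $m_i \leq 1$. This gives (2) $\Leftrightarrow$ (3). For the final clause, I would observe that the condition $KgK = Kg^{-1}K$ is precisely the condition that every double-coset indicator, viewed as a matrix in the image of the isomorphism above (equivalently, as an associate in the resulting association scheme), is symmetric—it is exactly the condition that makes the involution $f(g) \mapsto \overline{f(g^{-1})}$ act trivially on the real span of the double-coset indicators.

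The main obstacle will be executing the algebra isomorphism cleanly: one must be careful about whether the double-coset algebra is viewed as a subalgebra of $\mathbb{C}[G]$ (under convolution on $G$) or as a subalgebra of $\mathbb{C}[G/K]^K$, and the identification of convolution with composition of intertwiners requires a short but careful bookkeeping argument. Once that isomorphism is in place, however, the remainder of the proof is a clean application of Schur's lemma.
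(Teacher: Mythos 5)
The paper states this result as known background, citing MacDonald, and does not supply its own proof, so there is no internal argument to compare against. Your proof is correct and is exactly the standard argument found in the cited source: identifying the double-coset algebra with the intertwiner algebra $\mathrm{End}_G(\mathbb{C}[G/K])$ via the cyclic-generator/$K$-invariance argument, then applying Schur's lemma to decompose that endomorphism algebra as $\bigoplus_i M_{m_i}(\mathbb{C})$, which is commutative precisely when every $m_i \leq 1$; your treatment of the symmetric condition via the $*$-involution $f(g) \mapsto \overline{f(g^{-1})}$ on the double-coset basis is also the standard observation. (One small bookkeeping point you already flag: depending on conventions the convolution-to-composition map may be an anti-isomorphism rather than an isomorphism, but this is harmless since it preserves commutativity in both directions.)
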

\noindent Let $(G,K)$ be a Gelfand pair, $X := G/K$, and define $\chi_i$ to be the character of $V_i$ as in the second statement of Theorem~\ref{thm:gelfandPair}, with dimension $d_i := \chi_i(1)$. The functions $\omega^1, \omega^2, \cdots,\omega^m \in \mathbb{C}[X]$ defined such that  
\[  \omega^i(g) = \frac{1}{|K|} \sum_{k \in K} \chi_i (g^{-1}k) \quad \forall g \in G\]
are called the \emph{spherical functions} and form an orthogonal basis for $\mathbb{C}[K \backslash G /K]$, equivalently, the algebra of (left) $K$-invariant functions of $\mathbb{C}[X]$. For any two indices $i,j$, define $\delta_{i,j} = 1$ if $i = j$; otherwise, $\delta_{i,j} = 0$.
\begin{proposition}\label{prop:orthogonal}\emph{\cite{CST}}
The spherical functions form a basis for the space of (left) $K$-invariant functions in $\mathbb{C}[X]$ and satisfy the orthogonality relations
\[\langle \omega^i,\omega^j\rangle_X =  \sum_{x \in X} \omega^i(x)\overline{\omega^j(x)} = \delta_{i,j}\frac{|X|}{d_i}.\]
\end{proposition}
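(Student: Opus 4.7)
My plan is to establish Proposition~\ref{prop:orthogonal} in three steps: confirm bi-$K$-invariance of each $\omega^i$, derive the orthogonality relations via a character convolution calculation in $\mathbb{C}[G]$, and obtain the basis statement by a dimension count against $\mathbb{C}[K\backslash G/K]$.

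First I would check that each $\omega^i$ is bi-$K$-invariant on $G$, so that it descends to a $K$-invariant element of $\mathbb{C}[X]$. The left invariance $\omega^i(k_1 g) = \omega^i(g)$ follows by substituting $k \mapsto k_1 k$ in the defining sum, and the right invariance $\omega^i(g k_2) = \omega^i(g)$ follows by substituting $k \mapsto k k_2^{-1}$ together with the fact that $\chi_i$ is a class function.

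For the orthogonality relations I would lift the computation from $\mathbb{C}[X]$ to $\mathbb{C}[G]$. Since the $\omega^i$ are left $K$-invariant and each left coset of $K$ has exactly $|K|$ elements carrying the same value, $\langle \omega^i,\omega^j\rangle_G = |K|\langle \omega^i,\omega^j\rangle_X$. Expanding the definition,
\[
\langle \omega^i,\omega^j\rangle_G = \frac{1}{|K|^2}\sum_{g \in G}\sum_{k,k' \in K}\chi_i(g^{-1}k)\overline{\chi_j(g^{-1}k')},
\]
and substituting $h = g^{-1}k$ and $k'' = k^{-1}k'$ collapses the inner $g$-sum into a convolution of the two characters. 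The key identity is the Schur orthogonality in convolution form,
\[
\sum_{h \in G}\chi_i(h)\,\chi_j(h^{-1}x) = \delta_{ij}\,\frac{|G|}{d_i}\,\chi_i(x),
\]
obtained by taking traces of the matrix entry orthogonality, i.e., Schur's Lemma. Applying this with $x \in K$ and then summing over $K$ reduces the problem to evaluating $\sum_{k \in K}\chi_i(k)$. By Frobenius reciprocity this equals $|K|\langle \chi_i,\chi_{1\uparrow_K^G}\rangle_G$, and by the multiplicity-freeness in Theorem~\ref{thm:gelfandPair} this inner product is $1$. Combining these ingredients gives $\langle \omega^i,\omega^j\rangle_G = \delta_{ij}|G|/d_i$, and dividing by $|K|$ yields the claimed $\delta_{ij}|X|/d_i$.

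Finally, for the basis claim: the space of $K$-invariant functions in $\mathbb{C}[X]$ is canonically isomorphic to $\mathbb{C}[K\backslash G/K]$, which by Theorem~\ref{thm:gelfandPair} has dimension equal to the number $m$ of distinct irreducibles appearing in $1\uparrow_K^G$. Since $\omega^i(1) = \frac{1}{|K|}\sum_{k\in K}\chi_i(k) = 1 \neq 0$ and the $\omega^i$ are pairwise orthogonal by the relations just established, they are $m$ linearly independent vectors in an $m$-dimensional space and thus form a basis. The only delicate step is the bookkeeping of the change of variables and complex conjugates in the inner-product expansion; once the character convolution identity is brought to bear, everything else is routine.
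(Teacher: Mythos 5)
Your proof is correct. The paper does not supply its own argument for this proposition; it is cited as a known result to \cite{CST}, and the route you take---bi-$K$-invariance from the definition, the convolution form of Schur orthogonality for irreducible characters, the evaluation of $\sum_{k\in K}\chi_i(k)=|K|$ via Frobenius reciprocity and multiplicity-freeness, and a dimension count against $\mathbb{C}[K\backslash G/K]$---is precisely the standard proof one finds in that reference. Two small remarks: first, since the paper's convention is $X=G/K$ with $f(g)=f(gk)$, the coset-collapsing step $\langle\omega^i,\omega^j\rangle_G=|K|\langle\omega^i,\omega^j\rangle_X$ rests on \emph{right} $K$-invariance (constancy on $gK$), not left; your functions are bi-$K$-invariant so nothing breaks, but the wording ``left $K$-invariant and each left coset'' is the opposite of what is being used there. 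Second, when absorbing the complex conjugate you need $\overline{\chi_j(y)}=\chi_j(y^{-1})$ (valid since finite-group representations may be taken unitary) together with the class-function property to bring the sum into the form $\sum_h\chi_i(h)\chi_j(h^{-1}x)$; you gesture at this under ``bookkeeping,'' and it does work out, but it is worth spelling out since it is where a sign or inverse could otherwise go astray.
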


The (left) $K$-orbits of $X$ partition the cosets into $(K\backslash G /K)$-double cosets, or so-called \emph{spheres} $\Omega_0, \Omega_1, \cdots, \Omega_m$. It is helpful to think of spheres and spherical functions as the spherical analogues of conjugacy classes and irreducible characters respectively.  Indeed, the spherical functions are constant on spheres, and it can be shown that the number of distinct spherical functions equals the number of distinct irreducibles of $\mathbb{C}[X]$, equivalently, the number of spheres of $X$~\cite{CST}.  We write $\omega^i_j$ for the value of the spherical function $\omega^i$ corresponding to the $i$th irreducible on the double coset corresponding to $\Omega_j$.

For any choice of $K \leq G$, a general procedure is given in~\cite{BannaiI84} for constructing a (not necessarily commutative) association scheme whose Hecke algebra is isomorphic to $\mathbb{C}[K \backslash G /K]$.  An association scheme $\mathcal{A}$ that arises from this construction will be called a \emph{$(K \backslash G / K)$-association scheme}.  In such a scheme, there is a natural bijection between the associates of $\mathcal{A}$ and the double cosets, and if $\mathcal{A}$ is a $(K \backslash G / K)$-association scheme, then $\mathcal{A}$ is symmetric if and only if $(G,K)$ is a symmetric Gelfand pair~\cite{BannaiI84}.  

The following theorem is a representation-theoretic characterization of the eigenvalues of any graph that arises from a commutative $(K \backslash G / K)$-association scheme.
\begin{theorem}\label{thm:eigs}\emph{\cite{Lindzey17,GodsilMeagher}}
Let $\Gamma = \sum_{j \in \Lambda} A_{j}$ be a sum of associates in a $(K \backslash G / K)$-association scheme such that $(G,K)$ is a Gelfand pair and $\Lambda$ is the index set of some subset of the associates. The eigenvalue $\eta_i$ of $\Gamma$ corresponding to the $i$th irreducible representation of $1\uparrow_K^G $ has multiplicity $d_i$ and can be written as
\[ \eta_i = \sum_{j \in \Lambda} | \Omega_j | \omega^i_{j}. \]
\end{theorem}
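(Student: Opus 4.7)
The plan is to identify each spherical function $\omega^i$ as a simultaneous $K$-invariant eigenvector of all the associates $A_j$, and then compute the eigenvalues by evaluating at the identity coset. The key input is the Gelfand condition: by Theorem~\ref{thm:gelfandPair}, $1\uparrow_K^G \cong \bigoplus_i V_i$ is multiplicity-free, so each irreducible $V_i \subseteq \mathbb{C}[X]$ (with $X = G/K$) contains a one-dimensional space of $K$-invariants, spanned by $\omega^i$. Meanwhile, the construction of the $(K\backslash G/K)$-scheme in~\cite{BannaiI84} ensures that each $A_j$ commutes with the $G$-action on $\mathbb{C}[X]$, so Schur's lemma forces $A_j$ to act as a scalar $\eta_i^{(j)}$ on each $V_i$, making $\omega^i$ a common eigenvector.

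Next I would compute $\eta_i^{(j)}$ by evaluating $A_j \omega^i$ at the identity coset $K$. The normalization $\omega^i(e) = 1$ drops out of Frobenius reciprocity, since
\[
\omega^i(e) \;=\; \frac{1}{|K|}\sum_{k \in K}\chi_i(k) \;=\; \langle \chi_i \!\downarrow^G_K,\, 1_K\rangle_K,
\]
which equals the multiplicity of $V_i$ in $1\uparrow^G_K$ and hence equals $1$ by the Gelfand hypothesis. On the other hand, the row of $A_j$ indexed by $K$ has a $1$ in column $yK$ precisely when $yK$ lies in the sphere $\Omega_j$, so since $\omega^i$ is constant on spheres with value $\omega^i_j$,
\[
\eta_i^{(j)} \;=\; \eta_i^{(j)}\,\omega^i(K) \;=\; (A_j\omega^i)(K) \;=\; \sum_{yK \in \Omega_j}\omega^i(yK) \;=\; |\Omega_j|\,\omega^i_j.
\]
Linearity in $j$ then gives $\eta_i = \sum_{j \in \Lambda}|\Omega_j|\,\omega^i_j$ for $\Gamma = \sum_{j \in \Lambda}A_j$, and the multiplicity of this eigenvalue is $\dim V_i = d_i$ because $V_i$ sits inside the $\eta_i$-eigenspace of $\Gamma$ and the $V_i$ exhaust $\mathbb{C}[X]$.

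The main (very mild) obstacle is bookkeeping the convention for $|\Omega_j|$: one must view the sphere as a $K$-orbit on $X = G/K$ rather than as a double coset in $G$, so that no spurious factor of $|K|$ appears in the eigenvalue formula. Once this is fixed by the normalization $\omega^i(eK)=1$ established above, everything cancels cleanly and the claimed identity follows.
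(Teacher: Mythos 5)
Your proof is correct and is the standard argument: the Gelfand hypothesis forces $\mathbb{C}[X]$ to decompose multiplicity-freely, Schur's lemma makes each associate act as a scalar on each irreducible constituent, and evaluating $A_j\omega^i$ at the identity coset (using $\omega^i(eK)=1$, which you correctly derive from Frobenius reciprocity) produces the sphere-weighted formula. The paper itself cites this result to \cite{Lindzey17,GodsilMeagher} without reproducing the proof, and what you have written is exactly the argument one expects to find there.

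One small point worth being precise about, should you include this in a write-up: the multiplicity claim, as you phrase it, shows that $V_i$ lies in the $\eta_i$-eigenspace and hence that eigenspace has dimension \emph{at least} $d_i$; equality requires that the map $i\mapsto\eta_i$ be injective, which need not hold in general. The theorem's intended reading is that $V_i$ is an eigenspace of every matrix in the Bose--Mesner algebra with associated eigenvalue $\eta_i$ on $\Gamma$, and $\dim V_i = d_i$; your argument establishes precisely that. You might phrase the conclusion as ``$V_i$ is contained in the $\eta_i$-eigenspace of $\Gamma$ and contributes $d_i$ to its dimension'' to avoid any ambiguity.
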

\begin{proposition}\label{prop:lessThanOne}
Let $\lambda,\mu \vdash n$. Then $| \omega^{\lambda}_\mu | \leq 1.$
\end{proposition}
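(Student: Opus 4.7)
The plan is to realize the spherical function $\omega^\lambda$ as a matrix coefficient of the corresponding irreducible representation against its unique (up to scalar) $K$-fixed unit vector, and then deduce the bound from the Cauchy--Schwarz inequality together with unitarity. This is the standard route for bounding spherical functions of symmetric Gelfand pairs, and it only uses the multiplicity-free property itself, not any detailed knowledge of the combinatorics of $(S_{2n},H_n)$.

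First, I would set things up. Since $(S_{2n},H_n)$ is a Gelfand pair, Theorem~\ref{thm:gelfandPair} yields the multiplicity-free decomposition $1\uparrow_{H_n}^{S_{2n}} \cong \bigoplus_{\lambda} V_\lambda$. By averaging any Hermitian inner product on $V_\lambda$ over $S_{2n}$, we may assume the realization $\pi_\lambda$ of each $V_\lambda$ is unitary. Multiplicity-freeness implies that the $H_n$-fixed subspace of $V_\lambda$ is one-dimensional; let $v_\lambda \in V_\lambda$ be a unit vector spanning it. The operator $P_\lambda := \frac{1}{|H_n|}\sum_{k \in H_n} \pi_\lambda(k)$ is the orthogonal projection onto this fixed subspace, so $P_\lambda = v_\lambda v_\lambda^{*}$ is a rank-one projection.

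Next, I would rewrite $\omega^\lambda$ in matrix-coefficient form. Substituting $\chi_\lambda(h) = \text{Tr}(\pi_\lambda(h))$ into the definition of the spherical function and using cyclicity of the trace,
\[\omega^\lambda(g) \;=\; \frac{1}{|H_n|}\sum_{k \in H_n} \chi_\lambda(g^{-1}k) \;=\; \text{Tr}\bigl(\pi_\lambda(g^{-1})\, P_\lambda\bigr) \;=\; \langle \pi_\lambda(g^{-1})v_\lambda,\, v_\lambda\rangle.\]
Applying the Cauchy--Schwarz inequality and the fact that $\pi_\lambda(g^{-1})$ is unitary (hence norm-preserving) gives
\[|\omega^\lambda(g)| \;\leq\; \|\pi_\lambda(g^{-1})v_\lambda\| \cdot \|v_\lambda\| \;=\; \|v_\lambda\|^{2} \;=\; 1.\]
Because $\omega^\lambda$ is constant on each double coset (sphere) $\Omega_\mu$, picking $g$ to be any representative of $\Omega_\mu$ yields $|\omega^\lambda_\mu| \leq 1$, as required.

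The only place where care is needed is in justifying that $P_\lambda$ is a rank-one projection onto a line rather than a projection onto a larger $H_n$-fixed subspace; this is precisely the content of the multiplicity-freeness guaranteed by Theorem~\ref{thm:gelfandPair}. Once that identification is made, the rest is routine manipulation with the trace and one application of Cauchy--Schwarz, so I do not anticipate any substantive obstacle beyond this bookkeeping.
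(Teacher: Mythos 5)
Your proof is correct, but it takes a genuinely different route from the paper. The paper argues via the association scheme: by Theorem~\ref{thm:eigs} applied to a single associate $A_\mu$, the quantity $|\Omega_\mu|\,\omega^\lambda_\mu$ is an eigenvalue of the $0$/$1$ matrix $A_\mu$, whose row sum is $|\Omega_\mu|$, so Perron--Frobenius immediately gives $|\omega^\lambda_\mu|\le 1$. You instead work entirely inside a single unitary irreducible: using multiplicity-freeness to identify the rank-one $H_n$-averaging projector $P_\lambda = v_\lambda v_\lambda^*$, rewriting $\omega^\lambda(g)$ as the matrix coefficient $\langle \pi_\lambda(g^{-1})v_\lambda, v_\lambda\rangle$, and then invoking Cauchy--Schwarz with unitarity. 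Both arguments are sound. Your approach is the classical one for spherical functions of a Gelfand pair and is self-contained at the level of representation theory (it does not rely on Theorem~\ref{thm:eigs} or on Perron--Frobenius), which makes it portable to any symmetric Gelfand pair. The paper's argument is shorter \emph{given} the association-scheme machinery it has already set up, and it foregrounds the combinatorial object (the associate matrix) that actually appears later in the eigenvalue estimates. As a minor bookkeeping point you already flag yourself: $P_\lambda$ is rank one only for those $\lambda$ actually occurring in $1\uparrow_{H_n}^{S_{2n}}$, but since Theorem~\ref{thm:decomp} tells us these are exactly the $2\lambda$ for $\lambda\vdash n$, and the proposition is stated for $\lambda\vdash n$, this is consistent.
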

\begin{proof}
Suppose there exists a $\lambda,\mu$ such that $|\omega^\lambda_\mu| > 1$.  Then by Theorem~\ref{thm:eigs}, the $\mu$-associate of $\mathcal{A}$ has an eigenvalue with magnitude is greater than its row sum $|\Omega_\mu|$, contradicting the Perron-Frobenius Theorem.
\end{proof}
\noindent The following lemma is a crude but useful upper bound on the eigenvalues of such graphs.
\begin{lemma}\label{lem:eigsbound}
Let $\Gamma = \sum_{j \in \Lambda} A_{j}$ be a sum of associates in a $(K \backslash G / K)$-association scheme such that $(G,K)$ is a Gelfand pair, $X = G/K$, and $\Lambda$ is the index set of some subset of the associates.  Then for $i$th irreducible representation of $1\uparrow_K^G $ we have
\[ |\eta_i| \leq \sqrt{ |X||\Omega_{\Lambda}|/d_i},\]
where $\Omega_\Lambda = \cup_{j \in \Lambda} \Omega_j$ is a disjoint union of spheres indexed by $\Lambda$.
\end{lemma}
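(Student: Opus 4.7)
The plan is to express $\eta_i$ as an inner product on $X$ and then bound it via Cauchy--Schwarz, using Proposition~\ref{prop:orthogonal} to control the norm of the spherical function and the trivial bound on the norm of an indicator function.

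First, I would recall from Theorem~\ref{thm:eigs} that
\[\eta_i \;=\; \sum_{j \in \Lambda} |\Omega_j|\, \omega^i_{j}.\]
Since each spherical function $\omega^i$ is constant on spheres and $\Omega_\Lambda = \bigsqcup_{j \in \Lambda} \Omega_j$, the right-hand side is exactly the inner product of $\omega^i$ against the indicator of $\Omega_\Lambda$ in $X$:
\[\eta_i \;=\; \sum_{x \in \Omega_\Lambda} \omega^i(x) \;=\; \langle \omega^i, 1_{\Omega_\Lambda} \rangle_X.\]
(Here I am tacitly using that $\omega^i$ is real-valued on spheres of a symmetric Gelfand pair, or else applying the conjugate inside the inner product; either way the computation is the same.)

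Next, I would apply the Cauchy--Schwarz inequality to this inner product, which gives
\[|\eta_i| \;\leq\; \|\omega^i\|_X \cdot \|1_{\Omega_\Lambda}\|_X.\]
By Proposition~\ref{prop:orthogonal} with $i = j$, we have $\|\omega^i\|_X^2 = |X|/d_i$, and since $1_{\Omega_\Lambda}$ is a $\{0,1\}$-valued function its squared norm is simply $|\Omega_\Lambda|$. Substituting these two equalities into the Cauchy--Schwarz bound yields the desired inequality
\[|\eta_i| \;\leq\; \sqrt{|X|\,|\Omega_\Lambda|/d_i}.\]

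I do not expect any genuine obstacle here: the whole argument is essentially two lines once one recognizes the sum $\sum_j |\Omega_j|\omega^i_j$ as an $L^2$-inner product against $1_{\Omega_\Lambda}$. The only minor subtlety worth double-checking is the identification of the $(K \backslash G/K)$-double cosets with $K$-orbits (spheres) in $X = G/K$, so that the sizes $|\Omega_j|$ appearing in Theorem~\ref{thm:eigs} are the same $|\Omega_j|$ appearing when evaluating $\omega^i$ as a function on $X$; this is part of the standard Gelfand-pair dictionary recalled just before the statement.
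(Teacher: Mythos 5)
Your proof is correct and is essentially identical to the paper's: both rewrite $\eta_i = \sum_{j\in\Lambda}|\Omega_j|\omega^i_j$ as the inner product $\langle \omega^i, 1_{\Omega_\Lambda}\rangle_X$, apply Cauchy--Schwarz, and invoke Proposition~\ref{prop:orthogonal} together with the trivial bound $\|1_{\Omega_\Lambda}\|_X^2 = |\Omega_\Lambda|$.
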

\begin{proof}
Let $f \in \mathbb{C}[X]$ be the characteristic function of $\Omega_\Lambda$. Theorem~\ref{thm:eigs} implies that
$$\eta_i = \langle \omega^i , f \rangle_X,$$
By Proposition~\ref{prop:orthogonal}, we have $\langle \omega^i , \omega^i \rangle_X  = |X|/d_i$ for any spherical function $\omega^i$, so by the Cauchy-Schwarz inequality we see that
\[ |\eta_i| = |\langle \omega^i ,  f\rangle_X | \leq \sqrt{\langle \omega^i , \omega^i \rangle_{X} \langle f, f \rangle_X} = \sqrt{|X||\Omega_{\Lambda}|/d_i},\]
as desired.
\end{proof}

It is well-known that $(S_{2n}, H_n)$ is a symmetric Gelfand pair (see~\cite[Chap. VII]{MacDonald95}), and so the permutation representation of $S_{2n}$ acting on $\mathcal{M}_{2n}$, equivalently $1 \uparrow^{S_{2n}}_{H_n}$, admits a multiplicity-free decomposition into irreducibles as follows.
\begin{theorem}
\emph{\cite{Thrall42}}\label{thm:decomp} Let $\lambda = (\lambda_1, \lambda_2,\cdots,\lambda_k) \vdash n$ and let $2\lambda$ denote the irreducible of $S_{2n}$ corresponding to the partition $2\lambda = (2\lambda_1, 2\lambda_2,\cdots,2\lambda_k) \vdash 2n$. Then
\[ 1 \uparrow^{S_{2n}}_{H_n} \cong \bigoplus_{\lambda \vdash n}~2\lambda.\]
\end{theorem}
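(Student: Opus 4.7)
The plan is to pass to symmetric functions via the Frobenius characteristic map $\mathrm{ch}: R(S_{2n}) \to \Lambda_{2n}$, a ring isomorphism sending $\chi^\mu \mapsto s_\mu$. Under this identification the claim becomes
$$\mathrm{ch}\bigl(1\uparrow^{S_{2n}}_{H_n}\bigr) \;=\; \sum_{\lambda \vdash n} s_{2\lambda}.$$
First I would compute the left-hand side using the standard formula for inducing from a wreath product. Since $H_n = S_2 \wr S_n$ and the inducing representation is trivial on both $S_2$ and $S_n$, we obtain
$$\mathrm{ch}\bigl(1\uparrow^{S_{2n}}_{H_n}\bigr) \;=\; h_n[h_2],$$
where $[\cdot]$ denotes plethysm and $h_k = s_{(k)}$ is the complete homogeneous symmetric function. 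This reduces the theorem to the plethystic identity
$$h_n[h_2] \;=\; \sum_{\lambda \vdash n} s_{2\lambda}.$$

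Next I would establish this identity by passing to the generating function. Since $h_2(x) = \sum_{i \le j} x_i x_j$, the plethystic definition yields
$$\sum_{n \ge 0} h_n[h_2](x) \;=\; \prod_{i \le j}\bigl(1 - x_i x_j\bigr)^{-1},$$
and by the classical Littlewood identity the right-hand side equals $\sum_{\nu} s_\nu(x)$, summed over partitions $\nu$ all of whose parts are even. Extracting the homogeneous component of degree $2n$ on both sides gives exactly $\sum_{\lambda \vdash n} s_{2\lambda}$, and applying $\mathrm{ch}^{-1}$ delivers the desired decomposition.

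The main obstacle is the Littlewood identity itself, which, while classical, requires genuine symmetric-function machinery; I would cite \cite{MacDonald95} for a self-contained proof. A purely character-theoretic alternative is available: the Gelfand pair structure of $(S_{2n}, H_n)$ (Theorem~\ref{thm:gelfandPair}) already guarantees that $1\uparrow^{S_{2n}}_{H_n}$ is multiplicity-free, so by Frobenius reciprocity one only needs to check that
$$\frac{1}{|H_n|}\sum_{\sigma \in H_n}\chi^{\mu}(\sigma) \;\in\; \{0,1\}$$
equals $1$ precisely when $\mu$ is even. This can be done by enumerating $H_n$-cycle types inside $S_{2n}$ and applying the Murnaghan--Nakayama rule, but the combinatorial bookkeeping is delicate and amounts to a restatement of the same plethystic content, so I would favor the symmetric-function route.
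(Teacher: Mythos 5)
Your proof is correct, and it is essentially the standard textbook derivation. The paper itself gives no proof of Theorem~\ref{thm:decomp}: it cites Thrall's original paper and points to MacDonald~\cite[Chap.\ VII]{MacDonald95} for the Gelfand-pair assertion, treating the decomposition as known. The route you take --- pass to the Frobenius characteristic, observe that inducing the trivial representation from $S_2 \wr S_n$ yields the plethysm $h_n[h_2]$, form the generating function $\prod_{i \le j}(1 - x_i x_j)^{-1}$, and invoke the Littlewood identity to identify the degree-$2n$ piece as $\sum_{\lambda \vdash n} s_{2\lambda}$ --- is precisely how MacDonald establishes this in the cited chapter, so you have reconstructed the reference that the paper relies on rather than found a genuinely different argument. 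Your remark on the character-theoretic alternative (multiplicity-freeness from the Gelfand-pair structure plus a Frobenius-reciprocity check that $\langle 1_{H_n}, \chi^\mu\!\downarrow_{H_n}\rangle$ is $1$ exactly for even $\mu$) is accurate, including your assessment that it ultimately encodes the same plethystic content. Two small points of polish: the Frobenius characteristic map restricted to degree $2n$ is a linear isomorphism $R(S_{2n}) \to \Lambda^{2n}$ rather than a ring isomorphism (the ring structure lives on the graded direct sum), and it is worth stating explicitly that the Littlewood identity you use is $\prod_{i\le j}(1-x_ix_j)^{-1} = \sum_{\mu\ \mathrm{even}} s_\mu$ with ``even'' meaning all \emph{rows} even, to avoid confusion with the dual identity $\prod_{i<j}(1-x_ix_j)^{-1} = \sum_{\mu'\ \mathrm{even}} s_\mu$ in which all \emph{columns} are even.
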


\noindent The corresponding symmetric $(H_n \backslash S_{2n} / H_n)$-association scheme is defined as follows. For each $\lambda \vdash n$, define the \emph{$\lambda$-associate} as the following $|\mathcal{M}_{2n}| \times |\mathcal{M}_{2n}|$ matrix,
\[
    (A_\lambda)_{i,j} = 
\begin{cases}
    1,& \text{if } d(i,j) = \lambda\\
    0,              & \text{otherwise}
\end{cases}
\]
where $i,j \in \mathcal{M}_{2n}$.  Let $\mathcal{A}$ be the set of all $\lambda$-associates, which we call \emph{the perfect matching association scheme}. For example, when $n = 4$ we have $\mathcal{A} = \{A_{(4)} , A_{(3,1)} , A_{(2,2)} , A_{(2,1^2)} , A_{(1^4)} \}$, and its character table is
\[
  P = \kbordermatrix{
    & (4) & (3,1) & (2,2) & (2,1^2) & (1^4) \\
    (4) & 48 & 32 & 12 & 12 & 1 \\
    (3,1) & -8 & 4 & -2 & 5 & 1 \\
    (2,2) & -2 & -8 & 7 & 2 & 1 \\
    (2,1^2) & 4 & -2 & -2 & -1 & 1 \\
    (1^4) & -6 & 8 & 3 & -6 & 1
  }.
\]
For a more detailed discussion of the perfect matching association scheme, see~\cite{GodsilMeagher}. 

The perfect matching $t$-derangement graph $\Gamma_t$ is the sum of $\lambda$-associates of $\mathcal{A}$ that have less than $t$ parts of size one.  Its eigenspaces are precisely the irreducibles of $\mathbb{R}[\mathcal{M}_{2n}]$ given in Theorem~\ref{thm:decomp}, and its eigenvalues are completely determined by Theorem~\ref{thm:eigs}.

For each $\lambda \vdash n$, the \emph{$\lambda$-sphere} is the following set:
\[ \Omega_\lambda := \{m \in \mathcal{M}_{2n} : d(m^*,m) = \lambda\} \]
where $\lambda \vdash n$.  The $\lambda$-spheres partition $\mathcal{M}_{2n}$, and as mentioned before, play the role of conjugacy classes in our spherical setting. Indeed, the following proposition is reminiscent of the elementary formula for the number of permutations in a given conjugacy class. 

For any $\lambda \vdash n$, let $m_i$ denote the number of parts of $\lambda$ that equal $i$, and define $$z_\lambda := \prod_{i \geq 1} i^{m_i} m_i!.$$
\begin{proposition}\label{prop:sphereSize} 
\emph{\cite{MacDonald95}}
For any $\lambda \vdash n$, we have $|\Omega_\lambda| = (2n)!!/(2^{\ell(\lambda)}z_\lambda)$.
\end{proposition}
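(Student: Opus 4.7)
The plan is to count $|\Omega_\lambda|$ directly from the cycle structure of $\Gamma(m^*,m)$. Any $m\in\Omega_\lambda$ is determined by (i) a partition of the $n$ edges of $m^*$ into unordered groups of sizes $\lambda_1,\dots,\lambda_{\ell(\lambda)}$, according to which cycle of $\Gamma(m^*,m)$ each $m^*$-edge lies in, together with (ii) for each group $G_i$, a choice of $\lambda_i$ new edges on the $2\lambda_i$ vertices of $G_i$ that knits $G_i$ into a single alternating $2\lambda_i$-cycle. The multinomial count for step (i) is $n!/(\prod_i\lambda_i!\,\prod_j m_j!)$, where $m_j$ denotes the multiplicity of $j$ in $\lambda$ and the factor $\prod_j m_j!$ removes the overcount from permuting groups of equal size.

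For step (ii), I would enumerate valid matchings on a single group by recording a traversal of the resulting $2\lambda_i$-cycle that starts at an $m^*$-edge. Such a traversal is specified by a permutation $\sigma\in S_{\lambda_i}$ (the cyclic order in which the $m^*$-edges appear) together with an orientation of each $m^*$-edge (which endpoint is visited first), giving $\lambda_i!\cdot 2^{\lambda_i}$ encodings in total. Since every $2\lambda_i$-cycle yields exactly $2\lambda_i$ such encodings, one for each starting $m^*$-edge in each of two directions, the count of valid matchings on the $2\lambda_i$ vertices is $\lambda_i!\cdot 2^{\lambda_i}/(2\lambda_i)=2^{\lambda_i-1}(\lambda_i-1)!$.

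Multiplying the contributions of (i) and (ii) and invoking the identities $(2n)!!=2^n n!$ and $z_\lambda=\prod_i\lambda_i\cdot\prod_j m_j!$ (the latter following from $\prod_i i^{m_i}=\prod_i\lambda_i$) yields
\[|\Omega_\lambda|=\frac{n!}{\prod_i\lambda_i!\,\prod_j m_j!}\cdot\prod_i 2^{\lambda_i-1}(\lambda_i-1)!=\frac{2^{n-\ell(\lambda)}\,n!}{z_\lambda}=\frac{(2n)!!}{2^{\ell(\lambda)}z_\lambda},\]
as claimed. The only delicate point is the dihedral overcount in step (ii): one must verify that reversing a traversal simultaneously reverses $\sigma$ and flips every orientation, so reflections of the cycle are already absorbed into the $2\lambda_i$-fold rotation-of-starting-flag overcount rather than contributing a spurious additional factor of two. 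Everything else is elementary algebra, so this is the step demanding the most care.
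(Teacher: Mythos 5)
Your proof is correct, and since the paper cites this identity from MacDonald's book without reproducing an argument, there is no in-paper proof to compare against. Your direct count is a clean and self-contained alternative to the standard route: MacDonald (Chapter~VII) obtains the same formula by identifying $\Omega_\lambda$ with an $H_n$-orbit on $S_{2n}/H_n$ and computing the size of the relevant stabilizer, whereas you count $m\in\Omega_\lambda$ directly by first distributing the $n$ edges of $m^*$ into cycle-classes and then threading each class into an alternating $2\lambda_i$-cycle. Your step~(ii) count $\lambda_i!\,2^{\lambda_i}/(2\lambda_i)=2^{\lambda_i-1}(\lambda_i-1)!$ is right, and the dihedral worry you flag is handled correctly: the $2\lambda_i$ encodings of a fixed cycle are exactly the $\lambda_i$ choices of starting $m^*$-edge times the $2$ directions of traversal, and reversing direction both reverses the cyclic order of the remaining $m^*$-edges and flips every edge orientation, so reflections are already absorbed and do not produce an extra factor of two. (One small sanity check worth including: for $\lambda_i=1$ the formula gives $1$, corresponding to $m$ reusing the $m^*$-edge to form a doubled edge, and for $\lambda_i=2$ it gives $2$, matching the two ways to close a $4$-cycle on the four vertices.) The final algebraic simplification $\prod_i 2^{\lambda_i-1}=2^{n-\ell(\lambda)}$, $\prod_i(\lambda_i-1)!/\lambda_i!=1/\prod_i\lambda_i$, and $z_\lambda=\prod_i\lambda_i\cdot\prod_j m_j!$ is all correct, yielding $(2n)!!/(2^{\ell(\lambda)}z_\lambda)$ as claimed.
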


\noindent Lemma~\ref{lem:spheres} follows from~\cite[{Lemma 5}]{EllisFP11}, but we include a proof for sake of completeness. 
\begin{lemma}\label{lem:spheres}
\emph{\cite{EllisFP11}} Let $k < n/2$. If $\Omega_\lambda$ is a sphere such that $\lambda$ has a part of size $n-k$, then
\[\frac{2^nn!}{2(n-k)(2k)^k} \leq |\Omega_\lambda | \leq 2^{n+1}(n-1)!\]
\end{lemma}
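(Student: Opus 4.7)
The plan is to reduce the statement to an estimate on $z_\lambda$ and $\ell(\lambda)$ via the explicit formula $|\Omega_\lambda| = (2n)!!/(2^{\ell(\lambda)} z_\lambda) = 2^n n!/(2^{\ell(\lambda)} z_\lambda)$ from Proposition~\ref{prop:sphereSize}. After clearing the common factor $2^n n!$, the claimed inequalities become
\[\tfrac{n}{2}\;\le\; 2^{\ell(\lambda)} z_\lambda \;\le\; 2(n-k)(2k)^k.\]
So the whole lemma boils down to bounding $2^{\ell(\lambda)} z_\lambda$ from above and below under the hypothesis that $\lambda$ has a part equal to $n-k$ and $k < n/2$.

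First I would exploit that $n-k > n/2$ to conclude that the part of size $n-k$ appears in $\lambda$ with multiplicity exactly one (two such parts would exceed $n$), and that every other part of $\lambda$ is at most $k$. Thus I can write $\lambda = (n-k) \cup \mu$ for a (possibly empty) partition $\mu \vdash k$ with $\mu_1 \le k$, which gives the clean factorization
\[2^{\ell(\lambda)} z_\lambda \;=\; 2(n-k)\cdot 2^{\ell(\mu)} z_\mu.\]
The upper bound on $|\Omega_\lambda|$ is now immediate: since $\ell(\mu) \ge 0$ and $z_\mu \ge 1$, we have $2(n-k)\cdot 2^{\ell(\mu)} z_\mu \ge 2(n-k) > n$ because $k < n/2$, which is stronger than $n/2$.

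The lower bound on $|\Omega_\lambda|$ is the more substantive half and reduces to showing
\[2^{\ell(\mu)} z_\mu \;=\; \prod_{i\ge 1}(2i)^{m_i}\, m_i! \;\le\; (2k)^k,\]
where $m_i$ is the multiplicity of $i$ in $\mu$. The key step here is the bound $m_i! \le (k/i)^{m_i}$, which follows from $m_i \le k/i$ (since $\sum_i i\,m_i = k$) combined with the trivial $m_i! \le m_i^{m_i}$. Substituting gives $(2i)^{m_i} m_i! \le (2k)^{m_i}$, and multiplying over $i$ yields $2^{\ell(\mu)} z_\mu \le (2k)^{\ell(\mu)} \le (2k)^k$, where the last step uses $\ell(\mu) \le k$. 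Multiplying through by $2(n-k)$ recovers the desired bound.

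I expect the only subtle point to be the initial uniqueness argument that pins $\mu$ to a partition of $k$; this is where the hypothesis $k < n/2$ is essential, and without it the factorization $\lambda = (n-k) \cup \mu$ (and hence the whole estimate) breaks down. The remaining estimates are routine manipulations of $z_\mu$.
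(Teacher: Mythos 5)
Your proposal is correct and takes essentially the same approach as the paper: both start from $|\Omega_\lambda| = 2^n n! / (2^{\ell(\lambda)} z_\lambda)$, peel off the unique part of size $n-k$ (using $k < n/2$ to justify uniqueness), and then bound $2^{\ell(\mu)} z_\mu$ above by $(2k)^k$. The only cosmetic difference is in how that last bound is obtained: you use the pointwise estimate $m_i! \le m_i^{m_i} \le (k/i)^{m_i}$, whereas the paper bounds $\prod_i m_i! \le (\ell(\mu))!$ and then applies AM/GM to $\prod_i \lambda_i$; these are minor variants of the same counting argument and yield the same conclusion.
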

\begin{proof}
Let $l := \ell(\lambda)$. Note that $\lambda_2 + \lambda_3 + \cdots + \lambda_{l} = k$, thus $l-1 \leq k$.
By Proposition~\ref{prop:sphereSize}, we have 
\[ |\Omega_\lambda| = \frac{2^nn!}{2(n-k)2^{l-1}z_{\lambda \setminus \lambda_1}}.\]
Using the arithmetic mean-geometric mean inequality (AM/GM), we have
\begin{align*}
n/2 < n-k &< 2(n-k)2^{l-1}z_{\lambda \setminus \lambda_1}\\ 
&\leq 2(n-k)2^{l-1}(l-1)!\prod^l_{i=2}\lambda_i\\
&\leq 2(n-k)2^{l-1}(l-1)!\left( \frac{1}{(l-1)} \right)^{l-1} \quad \text{(AM/GM)}\\
&\leq 2(n-k)(2k)^{l-1}\\
&\leq 2(n-k)(2k)^k, 
\end{align*}
and so we arrive at
\[ \frac{2^nn!}{2(n-k)(2k)^k} \leq |\Omega_\lambda| \leq 2^{n+1}(n-1)!,\]
which completes the proof.
\end{proof}

\section{The Fourier Support of Canonically $t$-Intersecting Families}\label{sec:low}

The main result of this section is that the characteristic functions of canonically $t$-intersecting families of $\mathcal{M}_{2n}$ are supported on the ``even low frequencies" of the Fourier spectrum of $S_{2n}$.  More precisely, for any $t < n/2$, let $U_t$ be the space of functions of $\mathbb{R}[\mathcal{M}_{2n}]$ supported on the fat even partitions, that is,
\[ U_t = \{ f \in \mathbb{R}[\mathcal{M}_{2n}] : \widehat{f}(\rho) = 0 \text{ for all }\rho < 2(n-t,1^t)  \}.\]

\begin{theorem}\label{thm:supp}
For any $t < n/2$, we have 
$$\emph{Span}\{ 1_{\mathcal{F}} \in \mathbb{R}[\mathcal{M}_{2n}] : \mathcal{F} \subseteq \mathcal{M}_{2n} \emph{ is canonically $t$-intersecting} \} \leq U_t.$$
\end{theorem}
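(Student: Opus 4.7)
The plan is to realize $V := \mathrm{Span}\{1_{\mathcal{F}_T}\}$ as the image of a natural $S_{2n}$-equivariant map, then compute the irreducible content of the source via transitivity of induction together with Pieri's rule. Let $X_t$ denote the set of $t$-subsets of pairwise-disjoint edges of $K_{2n}$. The assignment $e_T \mapsto 1_{\mathcal{F}_T}$ extends to an $S_{2n}$-equivariant linear map $\phi : \mathbb{R}[X_t] \to \mathbb{R}[\mathcal{M}_{2n}]$ whose image is precisely $V$; equivariance is immediate since $\sigma \mathcal{F}_T = \mathcal{F}_{\sigma T}$. Since $S_{2n}$ acts transitively on $X_t$ with point-stabilizer $H_t \times S_{2n-2t}$ (the hyperoctahedral group on the $2t$ vertices covered by $T$, times the symmetric group on the remaining $2n-2t$ vertices), we have $\mathbb{R}[X_t] \cong 1 \uparrow^{S_{2n}}_{H_t \times S_{2n-2t}}$.

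Next, transitivity of induction through the intermediate subgroup $S_{2t} \times S_{2n-2t}$, combined with Thrall's theorem $1 \uparrow^{S_{2t}}_{H_t} \cong \bigoplus_{\mu \vdash t} 2\mu$, gives
\[ \mathbb{R}[X_t] \cong \bigoplus_{\mu \vdash t}\bigl( S^{2\mu} \boxtimes S^{(2n-2t)}\bigr)\uparrow^{S_{2n}}_{S_{2t} \times S_{2n-2t}}. \]
Pieri's rule expresses each summand as $\bigoplus_\nu S^\nu$ over partitions $\nu \vdash 2n$ for which $\nu/2\mu$ is a horizontal strip of size $2n-2t$. Since $V = \mathrm{Im}(\phi) \leq \mathbb{R}[\mathcal{M}_{2n}]$, which by Thrall's theorem is the multiplicity-free direct sum of the even irreducibles $2\lambda$ with $\lambda \vdash n$, Schur's lemma restricts the irreducible constituents of $V$ to those $2\lambda$ such that $2\lambda/2\mu$ is a horizontal strip for some $\mu \vdash t$.

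It then suffices to verify the combinatorial claim that if $\lambda \vdash n$, $\mu \vdash t$, and $2\lambda/2\mu$ is a horizontal strip, then $2\lambda \geq 2(n-t,1^t)$ in reverse-lex order. The horizontal-strip condition reads $\lambda_{i+1} \leq \mu_i$ for all $i \geq 1$, so $\sum_{i \geq 2} \lambda_i \leq |\mu| = t$, which forces $\lambda_1 \geq n-t$. If $\lambda_1 > n-t$ the comparison is immediate; if $\lambda_1 = n-t$, then the remaining parts of $\lambda$ sum to $t$ and the reverse-lex-minimal such partition of $t$ is $(1^t)$, so $\lambda \geq (n-t,1^t)$. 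Multiplying through by two yields the desired inclusion $V \leq U_t$.

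The main obstacle is conceptual rather than technical: one must recognize that $V$ is an $S_{2n}$-subrepresentation and assemble the induced-representation/Pieri-rule framework. Once this is in place, the decomposition is mechanical and the final combinatorial check is immediate.
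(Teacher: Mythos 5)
Your proof is correct, and it takes a genuinely different route from the paper's, although the two share the same representation-theoretic core. The paper works in the group algebra $\mathbb{R}[S_{2n}]$: it writes $1_{\widetilde{\mathcal{F}_T}}$ as a sum of coset indicator functions over $S_{2n}/K$ where $K = H_t \times S_{2(n-t)}$ is the stabilizer of $T$, and then shows directly that the Fourier transform $\widehat{1_{\widetilde{\mathcal{F}_T}}}(\rho)$ vanishes whenever $\rho$ is a non-fat even irreducible, using Lemma~\ref{lem:zerosum} (vanishing of $\sum_{g}\rho(g)$ for nontrivial $\rho$) together with the multiplicity computation of Lemma~\ref{lem:triv}. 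You instead stay at the level of permutation modules: you identify the span $V$ of the canonically $t$-intersecting indicators as $\mathrm{Im}(\phi)$ for the $S_{2n}$-equivariant map $\phi:\mathbb{R}[X_t]\to\mathbb{R}[\mathcal{M}_{2n}]$, note that $\mathbb{R}[X_t]\cong 1\!\uparrow^{S_{2n}}_K$, decompose this via transitivity of induction, Thrall's theorem and Pieri's rule, and then observe that by semisimplicity the constituents of $\mathrm{Im}(\phi)$ must appear both in the source decomposition and in the multiplicity-free decomposition of $\mathbb{R}[\mathcal{M}_{2n}]$ into even irreducibles. In the end both arguments rest on the same decomposition of $1\!\uparrow^{S_{2n}}_K$ (the paper's Lemma~\ref{lem:triv} is exactly the Frobenius-reciprocity dual of your decomposition) and the same elementary observation that the resulting even constituents are fat. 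What your packaging buys is conceptual economy: you bypass the Fourier transform and Lemma~\ref{lem:zerosum} entirely, using only equivariance and semisimplicity, which makes the theorem essentially a one-liner once the stabilizer is identified. What you give up is the explicit Fourier-analytic formulation, which the paper reuses elsewhere (notably in Lemma~\ref{lem:fatnonzero}, which produces a specific family with nonzero Fourier weight at $2(n-t,1^t)$). A minor point of phrasing: ``Schur's lemma'' in your last step is really Maschke/semisimplicity (image of an equivariant map is a quotient of the source and a submodule of the target, so its constituents lie in both), but the argument is sound.
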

Before we prove this theorem, some preliminaries are in order. For any set $\mathcal{F} \subseteq \mathcal{M}_{2n}$, recall its characteristic function $1_{\mathcal{F}} \in \mathbb{R}[\mathcal{M}_{2n}]$ is defined such that $1_{\mathcal{F}}(m) = 1$ if $m \in \mathcal{F}$; otherwise, $1_{\mathcal{F}}(m) = 0$. Since every $m \in \mathcal{M}_{2n}$ corresponds to a right coset $\sigma H_n$ for some $\sigma \in S_{2n}$, there is a natural isomorphism between $\mathbb{R}[\mathcal{M}_{2n}]$ and the algebra of right $H_n$-invariant functions of $\mathbb{R}[S_{2n}]$, that is 
$$\mathbb{R}[\mathcal{M}_{2n}] \cong \{ f \in \mathbb{R}[S_{2n}] : f(\sigma) = f(\sigma h)~\forall \sigma \in S_{2n},~\forall h \in H_n \} \leq \mathbb{R}[S_{2n}].$$
Let $\widetilde{f} \in \mathbb{R}[S_{2n}]$ denote the $H_n$-invariant function of the group algebra of $S_{2n}$ corresponding to $f \in \mathbb{R}[\mathcal{M}_{2n}]$ under this isomorphism, and for any set $\mathcal{F} \subseteq \mathcal{M}_{2n}$, let $\widetilde{\mathcal{F}} \subseteq S_{2n}$ denote the corresponding set of permutations of size $|\mathcal{F}|(2n)!!$ that is a union of right cosets.  Under this isomorphism, it also follows that
\[ \langle \widetilde{f},\widetilde{f'}\rangle_{S_{2n}} = (2n)!!\langle f,f' \rangle_{\mathcal{M}_{2n}}\]
for any $f,f' \in \mathbb{R}[\mathcal{M}_{2n}]$. For any canonically $t$-intersecting family $\mathcal{F}_T$ we have
\[ |\widetilde{\mathcal{F}_T}| = (2(n-t)-1)!! (2n)!! = (2(n-t))! 2^t (n)_t.\]
Let $K := \text{Stab}_{S_{2n}}(T) \cong S_{2(n-t)} \times H_t$ be the stabilizer of $T$ with respect to $S_{2n}$. Recall that $T \subseteq m$ for each $m \in \mathcal{F}_T$, thus  $K \subseteq \widetilde{\mathcal{F}_T}$; however, these are not the only permutations that keep the edges of $T$ together. This can be seen by observing
\[ \frac{|\widetilde{\mathcal{F}_T}|}{|K|} = \frac{(2(n-t))! 2^t (n)_t}{(2(n-t))!2^tt!} = \binom{n}{t},\]
which suggests the following proposition that is not hard to see.
\begin{proposition}\label{prop:charcanon}
If $\mathcal{F}_T \subseteq \mathcal{M}_{2n}$ be canonically $t$-intersecting family, then its corresponding characteristic $H_n$-invariant function of $\mathbb{R}[S_{2n}]$ can be written as 
$$1_{\widetilde{\mathcal{F}_T}} = \sum_{s \in S} 1_{sK}$$
where $S$ is a set of $\binom{n}{t}$ representatives of distinct cosets of $S_{2n}/K$ and $1_{sK} \in \mathbb{R}[S_{2n}]$ is the characteristic function of the corresponding coset.
\end{proposition}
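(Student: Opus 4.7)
The plan is a direct orbit-stabilizer argument. First, unpack
\[
\widetilde{\mathcal{F}_T} = \{\sigma \in S_{2n} : \sigma m^* \in \mathcal{F}_T\} = \{\sigma \in S_{2n} : T \subseteq \sigma m^*\} = \{\sigma \in S_{2n} : \sigma^{-1} T \subseteq m^*\}.
\]
The last form suggests stratifying by the value of $\sigma^{-1} T$: for each $t$-subset $T^*$ of the $n$ edges of $m^*$, set
\[
A_{T^*} := \{\sigma \in S_{2n} : \sigma T^* = T\}.
\]
Because $\sigma^{-1} T$ is a well-defined $t$-subset of $m^*$ for every $\sigma \in \widetilde{\mathcal{F}_T}$, these strata partition $\widetilde{\mathcal{F}_T}$ into exactly $\binom{n}{t}$ pieces, one per choice of $T^*$.

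The main step is to show that each stratum is a single coset of $K$. Since $S_{2n}$ acts transitively on sets of $t$ pairwise-disjoint edges, $A_{T^*}$ is nonempty; pick any $\tau_{T^*} \in A_{T^*}$. Then $\sigma \in A_{T^*}$ iff $\tau_{T^*}^{-1}\sigma$ fixes $T^*$ setwise, i.e., $\tau_{T^*}^{-1}\sigma \in \text{Stab}_{S_{2n}}(T^*)$. As $\text{Stab}_{S_{2n}}(T^*) = \tau_{T^*}^{-1} K \tau_{T^*}$, this collapses to $\sigma \in K\tau_{T^*}$, so $A_{T^*}$ is exactly one coset of $K$ of size $|K| = (2(n-t))!\,2^t t!$.

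Combining, $\widetilde{\mathcal{F}_T} = \bigsqcup_{T^*} K\tau_{T^*}$ is a disjoint union of $\binom{n}{t}$ cosets of $K$; taking $S := \{\tau_{T^*}\}_{T^*}$ yields the indicator-function identity of the proposition, with the cardinality check $|\widetilde{\mathcal{F}_T}| = (2(n-t)-1)!!\,(2n)!! = \binom{n}{t}\,|K|$ (already recorded above the proposition) serving as sanity confirmation. The whole argument is routine orbit-stabilizer bookkeeping, so there is no substantive obstacle; the only point requiring care is a left-versus-right coset convention, since the partition arises naturally as right cosets $K\tau_{T^*}$.
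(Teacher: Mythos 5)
Your proof is correct, and it supplies precisely the orbit-stabilizer details that the paper compresses into the index count $|\widetilde{\mathcal{F}_T}|/|K|=\binom{n}{t}$ followed by ``not hard to see.'' Stratifying $\widetilde{\mathcal{F}_T}$ by $T^* = \sigma^{-1}T \subseteq m^*$ is the natural move, and identifying each nonempty stratum $A_{T^*}$ with $K\tau_{T^*}$ via $\text{Stab}(T^*)=\tau_{T^*}^{-1}K\tau_{T^*}$ is exactly right. The handedness caveat you flag is genuine and worth stating more forcefully: the partition falls out as cosets of the form $Ks$, since $\widetilde{\mathcal{F}_T}$ is closed under left multiplication by $K$ (from $kT=T$ one gets $T\subseteq\sigma m^*\Rightarrow T\subseteq k\sigma m^*$) but not under right multiplication --- e.g.\ at $n=3$, $T=\{\{1,2\}\}$, $\sigma=(1\,3)(2\,4)\in\widetilde{\mathcal{F}_T}$, $k=(4\,5)\in K$, one has $\sigma k\notin\widetilde{\mathcal{F}_T}$. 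So the proposition's $1_{sK}$ should read $1_{Ks}$. This is harmless where the proposition is actually used, in the proof of Theorem~\ref{thm:supp}: there one only needs $\sum_{k\in K}\rho(k)=0$, which annihilates $\sum_{\sigma\in\widetilde{\mathcal{F}_T}}\rho(\sigma)$ regardless of whether the $K$-sum sits to the left or the right of $\rho(s)$. A further small benefit of your argument is that it works without the implicit normalization $T\subseteq m^*$ that the surrounding text leans on when asserting $K\subseteq\widetilde{\mathcal{F}_T}$.
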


\begin{lemma}\label{lem:triv}
Let $\nu \vdash 2n$ be an irreducible of $S_{2n}$ and let $$K :=(S_{2(n-t)} \times H_t ) \leq (S_{2(n-t)} \times S_{2t}) =: H \leq S_{2n}.$$ Then the multiplicity of the trivial representation $1_K$ in $\nu \downarrow^{S_{2n}}_K $ equals the number of partitions $\mu \vdash t$ such that the Young diagram of $\nu \vdash 2n$ can be obtained from the Young diagram of $2\mu$ by adding $2(n - t)$ cells, no two in the same column.
\end{lemma}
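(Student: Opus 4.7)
The plan is to invoke Frobenius reciprocity to transform the statement into one about the multiplicity of $\nu$ in an induced representation, and then compute this induction in two stages via transitivity of induction, using $K \leq H \leq S_{2n}$ as the intermediate step.

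First, by Frobenius reciprocity, the multiplicity of the trivial representation $1_K$ in $\nu \downarrow^{S_{2n}}_K$ equals the multiplicity of $\nu$ in $1_K \uparrow^{S_{2n}}_K$. By transitivity of induction,
\[ 1_K \uparrow^{S_{2n}}_K \cong \left( 1_K \uparrow^H_K \right) \uparrow^{S_{2n}}_H, \]
so it suffices to compute this two-step induction and read off the multiplicity of $\nu$ in the end result.

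For the inner step, observe that $K = S_{2(n-t)} \times H_t$ sits inside $H = S_{2(n-t)} \times S_{2t}$ as a direct product of subgroups in each factor. Since induction commutes with external tensor products, we have
\[ 1_K \uparrow^H_K \cong 1_{S_{2(n-t)}} \otimes \left( 1_{H_t} \uparrow^{S_{2t}}_{H_t} \right). \]
Applying Theorem~\ref{thm:decomp} to decompose the second factor and invoking Lemma~\ref{lem:prod} to identify the resulting irreducibles of $H$, this becomes
\[ 1_K \uparrow^H_K \cong \bigoplus_{\mu \vdash t} \left( S^{(2(n-t))} \otimes S^{2\mu} \right), \]
where the trivial representation of $S_{2(n-t)}$ is identified with the irreducible $S^{(2(n-t))}$ indexed by the one-row partition. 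This is the step that forces the ``even'' partitions $2\mu$ to appear.

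For the outer step, inducing from the Young subgroup $H = S_{2(n-t)} \times S_{2t}$ to $S_{2n}$ is governed by the Littlewood--Richardson rule, which in our situation degenerates to Pieri's rule because one factor is indexed by the single-row partition $(2(n-t))$. Pieri's rule states that the multiplicity of $\nu$ in $(S^{(2(n-t))} \otimes S^{2\mu}) \uparrow^{S_{2n}}_H$ equals $1$ if the Young diagram of $\nu$ can be obtained from that of $2\mu$ by adding $2(n-t)$ cells, no two in the same column (a horizontal strip), and equals $0$ otherwise. Summing this $\{0,1\}$-valued indicator over all $\mu \vdash t$ and combining with the decomposition from the inner step yields exactly the count described in the lemma.

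The ``hard'' part is really just a bookkeeping exercise of composing the two inductions correctly: Thrall's theorem handles the passage from $H_t$ to $S_{2t}$ and explains the appearance of even partitions, while Pieri's rule handles the passage from $S_{2(n-t)} \times S_{2t}$ to $S_{2n}$ and explains the horizontal-strip condition. No inequalities or asymptotic estimates are required -- the entire proof is a manipulation of characters via standard tools from the representation theory of the symmetric group.
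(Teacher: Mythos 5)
Your proposal is correct and follows essentially the same route as the paper: Frobenius reciprocity, transitivity of induction through the intermediate Young subgroup $H = S_{2(n-t)} \times S_{2t}$, Thrall's decomposition (Theorem~\ref{thm:decomp}) together with Lemma~\ref{lem:prod} for the inner induction from $K$ to $H$, and Pieri's rule for the outer induction from $H$ to $S_{2n}$. The only cosmetic difference is that the paper phrases the chain as equalities of character inner products while you phrase it as isomorphisms of representations.
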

\begin{proof} For any group $H$, let $1_H$ denote the trivial representation of $H$. By Lemma~\ref{lem:mults}, the multiplicity of $1_K$ in $\nu \downarrow^{S_{2n}}_K $ is $\langle \chi_{1_K,} \chi_{ \nu \downarrow^{S_{2n}}_{K}  }\rangle$.  We have
\begin{align*}
\langle \chi_{1_K,} \chi_{ \nu \downarrow^{S_{2n}}_{K}  }\rangle  &= \langle \chi_{1_K \uparrow^{S_{2n}}_K}, \chi_\nu \rangle &\text{(Frobenius Reciprocity)}\\
&= \langle \chi_{1_{S_{2(n-t)}} \otimes 1_{H_t} \uparrow^{H}_K  \uparrow^{S_{2n}}_{H} }, \chi_\nu \rangle &\text{(Lemma~\ref{lem:prod} \& Transitivity of $\uparrow$)}\\
 &= \langle \chi_{(2(n-t)) \otimes 1_{H_t} \uparrow^{S_{2t}}_{H_t}  \uparrow^{S_{2n}}_{H} }, \chi_\nu \rangle \\
 &= \langle \chi_{ \bigoplus_{\mu \vdash t} ((2(n-t)) \otimes (2\mu)) \uparrow^{S_{2n}}_{H} }, \chi_\nu \rangle &\text{(Theorem~\ref{thm:decomp})}\\
  &= \langle \chi_{ \bigoplus_{\mu \vdash t} (((2(n-t)) \otimes (2\mu))\uparrow^{S_{2n}}_{H})}, \chi_\nu \rangle&\text{(Linearity of $\uparrow$)}\\
    &= \sum_{\mu \vdash t} \langle \sum_{\lambda} \chi_\lambda, \chi_\nu \rangle &\text{(Pieri's Rule)}
\end{align*}
where $\sum_\lambda$ ranges over partitions $\lambda \vdash 2n$ obtainable from $2\mu$ by adding $2(n-t)$ cells, no two in the same column.  The result now follows from Lemma~\ref{lem:mults}.
\end{proof}
\begin{corollary}\label{cor:trivMult}
Let $\nu \vdash 2n$ be a non-fat even partition.  Then the trivial representation of $K$ does not occur in $\nu \downarrow^{S_{2n}}_K$.
\end{corollary}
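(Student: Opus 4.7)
The plan is a short combinatorial deduction from Lemma~\ref{lem:triv}. First I would rephrase what it means for an even partition $\nu = 2\lambda \vdash 2n$ to be non-fat. In the reverse-lexicographic order on even partitions of $2t$, the smallest element is $(2^t)$: every even partition of $2t$ has first part at least $2$, and equality forces all parts to equal $2$. Consequently, any even $\nu$ with $\nu_1 = 2(n-t)$ automatically dominates $(2(n-t), 2^t) = 2(n-t, 1^t)$ and is therefore fat. So $\nu$ being non-fat forces $\nu_1 < 2(n-t)$.

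Next I would invoke Lemma~\ref{lem:triv}: the multiplicity of $1_K$ in $\nu \downarrow^{S_{2n}}_K$ equals the number of $\mu \vdash t$ such that $\nu/2\mu$ is a horizontal strip of size $2(n-t)$. The horizontal-strip condition is equivalent to the standard interlacing inequalities $\nu_i \geq 2\mu_i \geq \nu_{i+1}$ for all $i \geq 1$, since any added cell in row $i+1$ must sit in a column already filled by $2\mu$ in row $i$. Summing the upper bounds gives
\[ \sum_{i \geq 2} \nu_i \;\leq\; \sum_{i \geq 1} 2\mu_i \;=\; 2t, \]
so
\[ \nu_1 \;=\; 2n - \sum_{i \geq 2} \nu_i \;\geq\; 2n - 2t \;=\; 2(n-t), \]
contradicting $\nu_1 < 2(n-t)$. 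Hence no such $\mu$ exists and the multiplicity is zero.

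There is essentially no obstacle beyond verifying the characterization of non-fat even partitions in terms of the first part; once that is in hand, Lemma~\ref{lem:triv} reduces the corollary to a one-line interlacing argument on Young diagrams.
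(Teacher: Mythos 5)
Your proof is correct and follows essentially the same route as the paper: both appeal to Lemma~\ref{lem:triv} and observe that $\nu$ being non-fat forces $\nu_1 < 2(n-t)$, which is incompatible with obtaining $\nu$ from some $2\mu$ (for $\mu \vdash t$) by adjoining a horizontal strip of size $2(n-t)$. The only cosmetic difference is that the paper concludes directly from the fact that the $2(n-t)$ added cells occupy distinct columns and hence force $\nu_1 \geq 2(n-t)$, whereas you derive the same bound via the interlacing inequalities $\nu_{i+1} \leq 2\mu_i$; both are one-line arguments, so this is a matter of taste rather than a different method.
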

\begin{proof}
Since $\nu$ is non-fat, its first row has less than $2(n - t)$ cells. But then $\nu$ cannot be obtained by adding $2(n - t)$ cells, no two in the same column, to any $\mu \vdash t$.
\end{proof}
\begin{proof}[Proof of Theorem~\ref{thm:supp}]
Let $f := 1_{\widetilde{\mathcal{F}_T}} \in \mathbb{R}[S_{2n}]$ be the characteristic function of a canonically $t$-intersecting family $\mathcal{F}_T \subseteq \mathcal{M}_{2n}$. Proposition~\ref{prop:charcanon} implies that $f = \sum_{s \in S} 1_{sK}$ where $K$ is the stabilizer of $T$ in $S_{2n}$ and $S$ is a set of $\binom{n}{t}$ coset representatives.  

Let $\rho$ be an even non-fat irreducible.  Applying the Fourier transform gives us
\begin{align*}
\widehat{f}(\rho) &= \frac{1}{(2n)!} \sum_{\sigma \in S_{2n}} f(\sigma)\rho(\sigma)\\
&= \frac{1}{(2n)!} \sum_{\sigma \in \widetilde{\mathcal{F}_T}} \rho(\sigma)\\
&= \frac{1}{(2n)!}\sum_{s \in S}\rho(s) \left( \sum_{k \in K} \rho(k) \right)   \\
&= \frac{1}{(2n)!}\sum_{s \in S} \rho(s) \left( \sum_{k \in K} \rho \downarrow^{S_{2n}}_{K}(k) \right) .\\
\intertext{By Corollary~\ref{cor:trivMult}, the trivial representation does not appear in $\rho \downarrow^{S_{2n}}_{K}$; therefore, writing $\rho \downarrow_K^{S_{2n}}$ as a direct sum of irreducibles and applying Lemma~\ref{lem:zerosum} gives us}
&= \frac{1}{(2n)!} \sum_{s \in S}  \rho(s) \left(  \sum_{k \in K} \rho \downarrow^{S_{2n}}_{K}(k) \right) = 0,
\end{align*}
which completes the proof.
\end{proof}

The following lemma shows that there is a canonically $t$-intersecting family whose characteristic function has nonzero Fourier weight on the irreducible $(2(n-t,1^t))$.
\begin{lemma}\label{lem:fatnonzero} If $S = \{\{3,4\},\{5,6\},\cdots,\{2t+1,2t+2\}\}$, then $\widehat{1_{\mathcal{F}_{S} }}(2(n-t,1^t)) \neq 0$.
\end{lemma}
\begin{proof}
Let $T$ be the unique standard Young tableau of shape $2(n-t,1^t)$ such that the second row of $T$ is $\{3,4\}$, the third row of $T$ is $\{5,6\}$, and so on. Define $1_{\{ T\}} \in \mathbb{R}[\mathcal{M}_{2n}]$ such that $1_{\{  T\}}(m) = 1$ if the endpoints of each edge of $m$ both exist in the same row of $\{ T\}$; otherwise, $1_{\{  T\}}(m) = 0$. 
Let 
$$f_{T} = \sum_{\sigma \in C_T} \text{sign}(\sigma) 1_{\{ \sigma T\}},$$ 
which lives in the $2(n-t,1^t)$ irreducible subspace of $\mathbb{R}[\mathcal{M}_{2n}]$ (see~\cite[Ch.11]{CST} or Section~\ref{sec:combchar} for a proof). For each $m \in \mathcal{F}_{S}$, we have 
$1_{\mathcal{F}_S}(m) \cdot \text{sign}(\sigma)1_{\{\sigma T\}}(m) \neq 0$ if and only if $\sigma = \sigma_1 \sigma_2$ where $\sigma_1$ and $\sigma_2$ are disjoint permutations that act on the cells of first and second columns respectively of $T$ in the same way. Any such $\sigma$ is even, so we have $\langle 1_{\mathcal{F}_{S}}, f_T \rangle_{\mathcal{M}_{2n}} > 0$.  Since the projection of $1_{\mathcal{F}_{S}}$ onto the irreducible $2(n-t,1^t)$ is nonzero, we have that $\widehat{1_{\mathcal{F}_{S} }}(2(n-t,1^t)) \neq 0$, as desired.
\end{proof}
%

\section{Character Theory and Symmetric Functions}\label{sec:char}

We now describe the character table of the perfect matching association scheme in terms of symmetric functions and their transition matrices. This point of view will give compact proofs of a few results needed to show that a pseudo-adjacency matrix of the perfect matching $t$-derangement graph with the correct eigenvalues exists.  We make no attempt to overview the theory of symmetric functions, but the majority of the material in this section can be found in MacDonald's text~\cite{MacDonald95}.

The characters of the symmetric group arise naturally in the study of \emph{Schur symmetric functions} $\{s_\mu\}_{\mu \vdash n}$, a well-known basis of the space of symmetric functions in $n$ variables.
In particular, when the \emph{power-sum symmetric functions} $\{p_\lambda\}_{\lambda \vdash n}$ are expressed in terms of the Schur functions, we obtain the irreducible characters of $S_n$:
\[ p_\lambda = \sum_{\mu \vdash n} \chi_\lambda^\mu s_\mu.\]
In other words, $M(p,s)$ is the character table of $S_n$ where $M(x,y)$ denotes the reverse-lexicographically ordered transition matrix from basis $x$ to basis $y$.  Similarly, when the power-sum symmetric functions are expressed in the \emph{monomial symmetric function} basis, we get the \emph{permutation characters}:  
\[ p_\lambda = \sum_{\mu \vdash n} D_{\lambda,\mu} m_\mu,\]
where $D_{\lambda, \mu}$ is equal to the number of ordered partitions $\pi = (B_1,\cdots, B_{\ell(\mu)})$ of the set $\{1,2,\cdots,\ell(\lambda)\}$ such that $\mu_j = \sum_{i \in B_j} \lambda_i$ for all $1 \leq j \leq \ell(\mu)$ (see~\cite{StanleyV201} for a proof).

It will be instructive to first give a short proof of~\cite[Theorem 20]{EllisFP11} from this viewpoint.  We begin by recalling a few well-known results.
\begin{theorem}
\emph{\cite{MacDonald95}} The transition matrix $M(p,m) = (D_{\lambda, \mu})$ is lower-triangular
\end{theorem}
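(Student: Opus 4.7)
The plan is to establish the stronger statement that $D_{\lambda,\mu}\neq 0$ implies $\mu\unrhd\lambda$ in the dominance order on partitions, that is, $\sum_{j=1}^k \mu_j \geq \sum_{i=1}^k \lambda_i$ for every $k \geq 1$. Since $\mu\unrhd\lambda$ forces $\mu \geq \lambda$ in reverse-lexicographic order (at the first index $k$ where $\mu$ and $\lambda$ differ, the prefix-sum equalities $\sum_{i<k}\mu_i = \sum_{i<k}\lambda_i$ combined with dominance at $k$ force $\mu_k > \lambda_k$), this will imply $D_{\lambda,\mu} = 0$ whenever $\lambda > \mu$ in reverse-lex, which is exactly the assertion that $M(p,m)$ is lower-triangular.

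For the dominance step I would fix a witnessing ordered set-partition $\pi = (B_1,\ldots,B_{\ell(\mu)})$ with $\mu_j = \sum_{i \in B_j}\lambda_i$, and for each $i \in \{1,\ldots,\ell(\lambda)\}$ write $j(i)$ for the unique block label with $i \in B_{j(i)}$. Fixing $k$, set $J := \{j(i) : 1 \leq i \leq k\}$, so $|J| \leq k$. Because $\{1,\ldots,k\} \subseteq \bigcup_{j \in J} B_j$ and every $\lambda_i$ is nonnegative,
\[ \sum_{j \in J} \mu_j = \sum_{i \in \bigcup_{j \in J} B_j}\lambda_i \geq \sum_{i=1}^k \lambda_i. \]

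It remains to bound $\sum_{j \in J}\mu_j$ above by $\sum_{j=1}^k \mu_j$. Because $\mu$ is weakly decreasing and $|J|\leq k$, one can injectively pair each $j_1 \in J$ with $j_1 > k$ to a distinct $j_2 \in \{1,\ldots,k\}\setminus J$; such a pair satisfies $\mu_{j_2} \geq \mu_{j_1}$, and the unmatched members of $\{1,\ldots,k\}\setminus J$ contribute nonnegatively. Combining with the previous display yields $\sum_{j=1}^k \mu_j \geq \sum_{i=1}^k \lambda_i$, as required.

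I do not anticipate a serious obstacle beyond this final rearrangement, which is the single step where the weak monotonicity of $\mu$ is essential; attempting to conclude from the bound $|J|\leq k$ alone without using the ordering of $\mu$ would not suffice.
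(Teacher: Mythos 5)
Your proof is correct. Since the paper only cites Macdonald for this fact and gives no argument of its own, there is no in-paper proof to compare against; your route, establishing the stronger triangularity in dominance order via the subset-sum observation and then descending to reverse-lexicographic order, is exactly the classical argument one finds in the symmetric-functions literature. Two small remarks: first, the conventions do line up, since with the paper's indexing (largest partitions first), ``lower-triangular'' means $D_{\lambda,\mu}=0$ for $\mu<\lambda$, which is precisely what your implication $D_{\lambda,\mu}\neq 0 \Rightarrow \mu\geq\lambda$ gives. Second, your final rearrangement step can be streamlined: since $\mu$ is weakly decreasing with nonnegative parts and $|J|\leq k$, one has immediately $\sum_{j\in J}\mu_j\leq\sum_{j=1}^{|J|}\mu_j\leq\sum_{j=1}^{k}\mu_j$ (the top $|J|$ parts majorize any $|J|$-subset), avoiding the explicit injective pairing; but the pairing argument you give is also valid.
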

\begin{theorem}\emph{\cite{MacDonald95}} The transition matrix $M(m,s) = (K_{\lambda, \mu})^{-1}$ is upper-unitriangular.
\end{theorem}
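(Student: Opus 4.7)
The plan is to deduce the statement from the fact that the Kostka matrix $(K_{\lambda, \mu})$ itself is upper-unitriangular with respect to the paper's reverse-lexicographic order, since the inverse of an upper-unitriangular matrix is upper-unitriangular.

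First I would unfold the definitions. The standard combinatorial expansion of a Schur function in the monomial basis reads
\[s_\lambda = \sum_{\mu \vdash n} K_{\lambda, \mu}\, m_\mu,\]
so $M(s, m) = (K_{\lambda, \mu})$ and therefore $M(m, s) = (K_{\lambda, \mu})^{-1}$. It thus suffices to show that $(K_{\lambda, \mu})$ is upper-unitriangular under the paper's rev-lex ordering (largest partition first).

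The two combinatorial facts I would then verify are: first, $K_{\lambda, \lambda} = 1$, since in a semistandard $\lambda$-tableau of weight $\lambda$ the entries equal to $i$ are forced to fill row $i$ (induct on $i$: the $i$'s cannot appear above row $i$ because those rows are already full, and they cannot appear below row $i$ because columns strictly increase); second, $K_{\lambda, \mu} \neq 0$ implies $\mu \unlhd \lambda$ in dominance order, because in any semistandard $\lambda$-tableau the entries at most $k$ must lie in the top $k$ rows, giving $\sum_{i \leq k} \mu_i \leq \sum_{i \leq k} \lambda_i$ for every $k$. Reverse-lexicographic order refines dominance: if $\mu \unlhd \lambda$ and $j$ is the first index where they differ, then the dominance inequality at level $j$ combined with $\mu_i = \lambda_i$ for $i < j$ yields $\mu_j < \lambda_j$, so $\mu < \lambda$ in rev-lex. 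Under the convention that largest partitions come first, the relation $\mu \leq \lambda$ in rev-lex places $\mu$ at a row/column index at least that of $\lambda$, so the nonzero entries of $(K_{\lambda, \mu})$ sit weakly above the diagonal. Combined with the diagonal being identically $1$, this shows $(K_{\lambda, \mu})$ is upper-unitriangular, and hence so is its inverse $M(m,s)$.

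The statement is a classical fact from symmetric function theory, so I do not anticipate any serious obstacle. The only care needed is to align the ordering conventions: the Kostka numbers form a matrix that is ``triangular under dominance,'' and one must check that the paper's rev-lex order preserves this triangularity in the direction (upper versus lower) claimed. This is consistent with the immediately preceding theorem, whose statement that $M(p, m)$ is lower-triangular relies on the opposite dominance relation satisfied by the coefficients $D_{\lambda, \mu}$.
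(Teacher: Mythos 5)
Your proof is correct and is the standard argument for this classical fact; the paper itself only cites MacDonald and gives no proof. You correctly identify the three ingredients: $K_{\lambda,\lambda}=1$, the dominance constraint $K_{\lambda,\mu}\neq 0 \Rightarrow \mu \unlhd \lambda$, and the observation that the paper's reverse-lexicographic order (largest first) is a linear extension of dominance, so triangularity under dominance becomes upper-triangularity under rev-lex, and unitriangularity is preserved under inversion. One small point worth making explicit, since you flag conventions as the main pitfall: the consistency check with the preceding theorem works out precisely because $D_{\lambda,\mu}\neq 0$ forces $\mu$'s parts to be sums of $\lambda$'s parts, hence $\mu \unrhd \lambda$ and $\mu \geq \lambda$ in rev-lex, giving lower-triangularity; the Kostka constraint runs in the opposite dominance direction, giving upper-triangularity, exactly as you say.
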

\begin{theorem}
An invertible matrix admits an $LU$-decomposition if and only if all its leading principal minors are nonsingular.
\end{theorem}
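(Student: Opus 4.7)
The plan is to prove both directions using induction on the matrix size $n$, handling the forward implication by a direct block calculation and the backward implication by a block-recursive construction of $L$ and $U$.

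For the forward direction, suppose $A = LU$ with $L$ lower triangular (unit diagonal) and $U$ upper triangular. The key observation is that if $L_k$ and $U_k$ denote the leading $k\times k$ principal submatrices of $L$ and $U$, then the leading $k\times k$ principal submatrix of $A$ satisfies $A_k = L_k U_k$. This follows because computing any entry $(A_k)_{ij} = A_{ij}$ with $i,j \leq k$ by the formula $\sum_\ell L_{i\ell} U_{\ell j}$ only involves indices $\ell \leq \min(i,j) \leq k$ (since $L_{i\ell} = 0$ for $\ell > i$ and $U_{\ell j} = 0$ for $\ell > j$). Since $A$ is invertible and $\det A = \det L \cdot \det U$, the diagonal entries of $U$ are all nonzero; hence $\det U_k \neq 0$, and since $L_k$ has unit diagonal, $\det A_k = \det U_k \neq 0$.

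For the backward direction, I induct on $n$. The base case $n=1$ is trivial: $A = [a]$ with $a \neq 0$ factors as $[1]\cdot[a]$. For the inductive step, write
\[ A = \begin{pmatrix} A_{n-1} & b \\ c^\top & a \end{pmatrix}, \]
where $A_{n-1}$ is the leading $(n-1)\times(n-1)$ block. By hypothesis, all leading principal minors of $A_{n-1}$ are nonsingular (they coincide with those of $A$), so by induction $A_{n-1} = L_{n-1}U_{n-1}$ with both factors invertible. One then verifies directly that
\[ A = \begin{pmatrix} L_{n-1} & 0 \\ c^\top U_{n-1}^{-1} & 1 \end{pmatrix} \begin{pmatrix} U_{n-1} & L_{n-1}^{-1} b \\ 0 & \alpha \end{pmatrix}, \quad \alpha := a - c^\top U_{n-1}^{-1} L_{n-1}^{-1} b. \]
To finish, I must check $\alpha \neq 0$, which is exactly the place the full invertibility of $A$ enters: taking determinants of both sides gives $\det A = \det L_{n-1} \cdot \det U_{n-1} \cdot \alpha = \det A_{n-1} \cdot \alpha$, so $\alpha = \det A / \det A_{n-1} \neq 0$ since both $\det A$ and $\det A_{n-1}$ are nonzero by hypothesis.

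The main obstacle is purely bookkeeping: verifying that the block factorization above is correct and that the induction hypothesis applies (i.e., that the leading principal minors of $A_{n-1}$ are exactly those of $A$ up to size $n-1$). There is no real analytical content beyond the determinant identity used to force $\alpha \neq 0$; the proof is essentially a formalization of Gaussian elimination without row swaps, where the hypothesis on principal minors is precisely what guarantees that no pivoting is required.
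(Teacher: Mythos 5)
Your proof is correct and is essentially the standard textbook argument (Gaussian elimination without pivoting). The paper states this theorem as a known linear-algebra fact and gives no proof of it at all — it is invoked only to conclude that the character table of $S_n$ and the zonal character table, being invertible matrices with lower-triangular times upper-unitriangular factorizations $(D_{\lambda,\mu}) \cdot M(m,s)$ and $(D_{\lambda,\mu}) \cdot M(m,Z)$ respectively, have all leading principal minors invertible. So there is nothing to compare against; your argument fills in a gap the paper deliberately left to the reader. Both directions of your proof check out: the block identity $A_k = L_k U_k$ is justified correctly by the index-range observation, the determinant bookkeeping $\det A_k = \det L_k \det U_k$ with $\det L_k \neq 0$ and $\det U_k \neq 0$ (from invertibility of $A$) handles the forward direction, and the block construction with Schur complement $\alpha = \det A / \det A_{n-1} \neq 0$ handles the backward direction cleanly.
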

\noindent These theorems provide an easy proof of the following.
\begin{theorem}
Any leading principal minor of the character table of the symmetric group is invertible.
\end{theorem}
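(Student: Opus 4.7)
The plan is to exploit the factorization $M(p,s) = M(p,m)\cdot M(m,s)$, which follows immediately from composing the two expansions
\[p_\lambda = \sum_{\nu \vdash n} D_{\lambda,\nu} m_\nu, \qquad m_\nu = \sum_{\mu \vdash n} (K^{-1})_{\nu,\mu} s_\mu.\]
The first factor $M(p,m) = (D_{\lambda,\mu})$ is lower-triangular by the first cited theorem, and the second factor $M(m,s) = (K_{\lambda,\mu})^{-1}$ is upper-unitriangular by the second cited theorem. Thus this equation is literally an $LU$-decomposition of the character table.

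Next I would check invertibility of the two factors. The upper factor is unitriangular, so its diagonal is all ones. For the lower factor, one sees that the diagonal entry $D_{\lambda,\lambda}$ counts ordered partitions $(B_1,\ldots,B_{\ell(\lambda)})$ of $\{1,\ldots,\ell(\lambda)\}$ with $\lambda_j = \sum_{i \in B_j} \lambda_i$; the choice $B_j = \{j\}$ always works, so $D_{\lambda,\lambda} \geq 1$. Hence $M(p,s)$ is a product of two invertible triangular matrices and is itself invertible.

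The conclusion is then immediate from the third cited theorem: an invertible matrix admits an $LU$-decomposition if and only if all its leading principal minors are nonsingular. Since we have produced such a decomposition, every leading principal minor of $M(p,s)$ is nonsingular. Alternatively, one may argue directly that because $L$ is lower and $U$ upper triangular, the leading $k \times k$ submatrix of $LU$ equals $L_k U_k$, which is a product of triangular matrices with nonzero diagonals, hence invertible.

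There is no real obstacle here; the only mild subtlety is making sure that the reverse-lexicographic ordering used in the paper is compatible with the triangularity statements (both are commonly phrased with respect to the dominance order, but the triangularity passes to any linear extension, and reverse-lex is such an extension). Once that is noted, the proof is essentially a one-line observation that the character table has been handed to us in $LU$-factored form.
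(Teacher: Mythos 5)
Your proof is correct and follows exactly the paper's route: recognize $M(p,s) = M(p,m)\cdot M(m,s)$ as an $LU$-decomposition with $L = (D_{\lambda,\mu})$ lower-triangular and $U = (K_{\lambda,\mu})^{-1}$ upper-unitriangular, then invoke the $LU$ criterion for nonsingular leading principal minors. The paper's own proof is terser (it gets invertibility of the whole matrix for free from its being a transition matrix rather than checking the diagonal of $L$), but the argument is the same, and your added remark about reverse-lexicographic order refining dominance is a sound way to handle the ordering convention.
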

\begin{proof}
The character table of $S_n$ is a transition matrix, thus it is invertible. Its $LU$-decomposition is $L = (D_{\lambda , \mu})$ and $U = (K_{\lambda, \mu})^{-1}$.
\end{proof}
\noindent The leading principal minors relevant to us are the ones induced by all the fat partitions except for the skinniest fat partition $(n-t,1^t)$.  Define $F := F_t-1$ where $t < n/2$. MacDonald observed that such minors exhibit a ``stability" property (not to be confused with the notion of stability in extremal combinatorics).  Let $D(n)$ and $K(n)$ be the transition matrices $M(p,m)$ and $M(s,m)$ indexed by partitions $\lambda \vdash n$ in reverse-lexicographic order.
\begin{lemma}\emph{\cite{MacDonald95}}\label{lem:kostkaStable}
The $F \times F$ leading principal minor of $K(n)$ (resp. $K(n)^{-1}$) equals the $F \times F$ leading principal minor of $K(n')$ (resp. $K(n')^{-1}$) for all $n' \geq n$.
\end{lemma}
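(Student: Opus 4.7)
The plan is to reduce the claim to a combinatorial stability statement for Kostka numbers on pairs of fat partitions, and then transfer this stability to the inverse by exploiting upper-unitriangularity.

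\emph{Setting up a size-independent parameterization.} For $n \geq 2t$, every fat $\lambda \vdash n$ admits a unique decomposition $\lambda = (n - |\bar\lambda|,\, \bar\lambda)$ with $\bar\lambda$ a partition of size at most $t$, and conversely every such $\bar\lambda$ yields a fat $\lambda$. I would verify that, under this bijection, the reverse-lexicographic order restricted to fat partitions pulls back to a fixed order on partitions of size $\leq t$: compare $\bar\lambda, \bar\mu$ first by $|\bar\lambda|$ versus $|\bar\mu|$ (a larger sub-partition gives a smaller $\lambda_1$, hence a smaller $\lambda$), and in case of equal size, reverse-lexicographically on $\bar\lambda$ versus $\bar\mu$, using that $\lambda_i = \bar\lambda_{i-1}$ for $i \geq 2$. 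Thus the indexing set of the leading $F \times F$ minor is the same for all sufficiently large $n$.

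\emph{The combinatorial core: stability of Kostka numbers on fat pairs.} I would show that, for $\lambda = (n-a, \bar\lambda)$ and $\mu = (n-b, \bar\mu)$ with $a, b \leq t$ and $n$ large, the Kostka number $K_{\lambda, \mu}$ depends only on $a, b, \bar\lambda, \bar\mu$. A semistandard tableau of shape $\lambda$ with weight $\mu$ must place all $n-b$ ones in row $1$ (a $1$ outside row $1$ would force a smaller positive integer above it). If $a > b$ then $\lambda_1 < \mu_1$, so $K_{\lambda, \mu} = 0$. Otherwise $a \leq b$, and removing the ones and decrementing all remaining entries by one gives a bijection to semistandard fillings of the skew shape $\lambda/(n-b)$ with weight $\bar\mu$. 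For $n$ large, the leftover row-$1$ cells sit at columns $n-b+1$ through $n-a$, which are column-disjoint from rows $\geq 2$ (those occupy columns $1$ through $\bar\lambda_1 \leq t$), so this skew shape is the disjoint union of a single row of length $b - a$ and the straight shape $\bar\lambda$. The count of such fillings is
\[ K_{\lambda, \mu} \;=\; \sum_{\nu} K_{(b-a),\, \nu}\, K_{\bar\lambda,\, \bar\mu - \nu}, \]
which manifestly depends only on $a, b, \bar\lambda, \bar\mu$. Taken together with the previous step, this shows that every entry of the leading $F \times F$ minor of $K(n)$ is eventually constant in $n$.

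\emph{Transfer to the inverse.} Since $K(n) = M(s,m)$ is upper-unitriangular in reverse-lexicographic order, its leading $F \times F$ principal minor $K_F(n)$ is itself upper-unitriangular. For upper-triangular matrices, inversion respects the leading-block structure, so the top-left $F \times F$ block of $K(n)^{-1}$ coincides with $K_F(n)^{-1}$. Hence stability of $K_F(n)$ immediately implies stability of the leading minor of $K(n)^{-1}$.

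\emph{Main obstacle.} The real content is the middle step: correctly handling the geometric decoupling that occurs once the first row is sufficiently long, and justifying the skew-to-disjoint-union reduction. The order-pullback step is essentially bookkeeping, and the inverse step is a standard consequence of upper-triangularity. A minor subtlety is checking that $n \geq 2t$ (or some similar explicit bound) suffices to make the column-disjointness work for every pair of fat $\lambda, \mu$ simultaneously.
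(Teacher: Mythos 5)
Your proof is correct and follows essentially the same combinatorial route as the cited source: the paper does not supply its own argument but attributes the lemma to MacDonald (p.~105), whose stability argument for Kostka numbers is exactly what you reconstruct — parameterize fat partitions by their sub-partitions below row one, observe that all $1$'s in a semistandard filling live in the first row, and pass to the resulting skew shape, which decouples into two column-disjoint pieces once $n \geq 2t$; the inverse then follows from upper-unitriangularity and the block-triangular inversion formula. The only point worth flagging is the one you already flag yourself: the statement implicitly assumes $n$ large enough that the set of fat partitions (minus the skinniest) is parameterized by a fixed set of sub-partitions and the geometric decoupling kicks in, which your $n \geq 2t$ threshold handles and which is consistent with the paper's standing assumption $t < n/2$.
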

\noindent Essentially the same combinatorial argument as described in~\cite[pg. 105]{MacDonald95} gives rise to an analogous result for $D(n)$, surely known to MacDonald, but also proven in~\cite{EllisFP11}.
\begin{lemma}\label{lem:permStable}
The $F \times F$ leading principal minor of $D(n)$ (resp. $D(n)^{-1}$) equals the $F \times F$ leading principal minor of $D(n')$ (resp. $D(n')^{-1}$) for all $n' \geq n$.
\end{lemma}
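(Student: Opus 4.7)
The plan is to mimic the combinatorial stability argument for Kostka numbers that underlies Lemma~\ref{lem:kostkaStable}. The key structural observation is that every fat partition $\lambda \vdash n$ decomposes uniquely as $\lambda = (n - j, \lambda^*)$ with $\lambda^* \vdash j$ and $0 \leq j \leq t$, and in reverse-lexicographic order the fat partitions come sorted first by $j$ (smaller $j$ first) and then by reverse-lex order on the tail $\lambda^*$. Once $n > 2t$ this listing is canonical and independent of $n$, so the rows and columns of the $F \times F$ leading principal minor of $D(n)$ can be matched one-to-one with those of $D(n')$ by pairing $(n - j, \lambda^*)$ with $(n' - j, \lambda^*)$.

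The heart of the proof is to show that each entry $D_{\lambda,\mu}$ of this minor is a function of the tails $(\lambda^*,\mu^*)$ only. I would unwind the combinatorial definition: $D_{\lambda,\mu}$ counts ordered set-partitions $(B_1, \ldots, B_{\ell(\mu)})$ of $\{1, \ldots, \ell(\lambda)\}$ satisfying $\sum_{i \in B_r} \lambda_i = \mu_r$ for every $r$. The crucial observation is that the giant part $\lambda_1 = n - j$ cannot fit inside any block $B_r$ for $r \geq 2$, since $\mu^* \vdash k \leq t$ forces $\mu_r \leq t < n - j$ once $n > 2t$. Hence the index $1$ must lie in $B_1$, and the remaining choices---selecting a subset of $\{2, \ldots, \ell(\lambda)\}$ to join $1$ in $B_1$ (with associated $\lambda$-values summing to $j - k$) and then distributing the leftover indices into $B_2, \ldots, B_{\ell(\mu)}$ to match the block sums $\mu_2, \ldots, \mu_{\ell(\mu)}$---refer only to $\lambda^*$ and $\mu^*$. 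This shows $D_{(n-j,\lambda^*),(n-k,\mu^*)}$ is independent of $n$ and establishes the stability of the leading principal minor of $D(n)$.

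For the statement about $D(n)^{-1}$, I would appeal to the lower-triangularity of $D(n)$ (from the theorem recalled just before the lemma) together with the fact that its diagonal entries are strictly positive (the trivial ordered partition $B_r = \{r\}$ always contributes, so $D_{\lambda,\lambda} \geq 1$). Writing $D(n)$ as a block lower-triangular matrix with the $F \times F$ leading principal minor in the upper-left corner, the standard block-inverse formula gives that the corresponding leading principal minor of $D(n)^{-1}$ is exactly the inverse of the leading principal minor of $D(n)$. Stability of the inverse then follows at once from stability of the minor.

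The main point requiring care is the fatness hypothesis on \emph{both} $\lambda$ and $\mu$: it is what ensures $\mu_r \leq t$ for $r \geq 2$, which in turn is what allows the large first part of $\lambda$ to be pinned into $B_1$. Once that is secure, the rest is a routine unpacking of the combinatorial definition of $D_{\lambda,\mu}$ together with the elementary block-inverse formula for triangular matrices.
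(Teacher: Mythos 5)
Your proof is correct, and it is precisely the combinatorial stability argument that the paper defers to via its citation of MacDonald (pg.~105) and Ellis–Friedgut–Pilpel: you pin the index of the giant first part into $B_1$ using $\mu_r \leq t < n - j$ for $r \geq 2$, reduce the count to data depending only on the tails $(\lambda^*,\mu^*)$, and obtain stability of the inverse for free from lower-triangularity of $D(n)$ and the block-inverse formula. The paper does not spell out the argument, but your proposal is a faithful and complete instantiation of it.
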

\begin{corollary}
The $F \times F$ leading principal minor of the character table of $S_n$ equals the $F \times F$ leading principal minor of the character table of $S_{n'}$ for all $n' \geq n$.
\end{corollary}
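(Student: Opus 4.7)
The plan is to factor the character table using the two transition matrices discussed just before the statement, and then exploit triangularity to reduce stability of the product to stability of the factors.

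First I would observe that the character table of $S_n$ factors as $M(p,s) = M(p,m)\cdot M(m,s) = D(n)\cdot K(n)^{-1}$, since the Schur functions, monomials, and power sums form bases of the ring of symmetric functions and change-of-basis composes. From the theorems cited right before the statement, $D(n)$ is lower triangular (in the reverse-lexicographic order) and $K(n)^{-1}$ is upper unitriangular.

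The key step is a general linear-algebra remark about leading principal minors of a product $LU$ in which $L$ is lower triangular and $U$ is upper triangular. Given indices $i,j \leq F$, in the sum $(LU)_{ij}=\sum_k L_{ik}U_{kj}$ the factor $L_{ik}$ vanishes unless $k\leq i\leq F$, and $U_{kj}$ vanishes unless $k\leq j\leq F$; in particular $k\leq F$ for every nonvanishing term. Hence the $F\times F$ leading principal minor of $LU$ equals the product of the $F\times F$ leading principal minors of $L$ and $U$. Applying this to $D(n)\cdot K(n)^{-1}$ shows that the $F\times F$ leading principal minor of the character table of $S_n$ equals $D(n)_F \cdot (K(n)^{-1})_F$, where subscript $F$ denotes the $F\times F$ leading principal minor.

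To conclude, I would invoke the two preceding stability lemmas: Lemma~\ref{lem:permStable} gives $D(n)_F = D(n')_F$ for all $n'\geq n$, and Lemma~\ref{lem:kostkaStable} gives $(K(n)^{-1})_F = (K(n')^{-1})_F$ for all $n'\geq n$. Multiplying these two equalities gives the claimed stability of the leading principal minor of the character table. I do not anticipate a substantive obstacle here; the entire content is already packaged in the two stability lemmas, and the only thing that could trip one up is being careful that the reverse-lexicographic indexing used for rows and columns is consistent across $D$, $K^{-1}$, and the character table, which it is by construction.
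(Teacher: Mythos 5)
Your proposal is correct and matches the paper's (implicit) argument: the corollary is meant to follow directly from the already-established $LU$-factorization of the character table as $D(n)\cdot K(n)^{-1}$ together with Lemmas~\ref{lem:permStable} and~\ref{lem:kostkaStable}, using exactly the triangular-product observation you spell out. The only thing you did that the paper leaves tacit is verifying that the $F\times F$ leading principal minor of a lower-triangular times upper-triangular product is the product of the $F\times F$ leading principal minors, which is the right thing to check.
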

\noindent The following result is now immediate.
\begin{theorem}\emph{\cite[Theorem 20]{EllisFP11}}
The $F \times F$ leading principal minor of the character table of $S_n$ is invertible with entries uniformly bounded by a function of $t$.
\end{theorem}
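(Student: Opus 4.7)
The plan is to deduce the theorem as a direct corollary of the three immediately preceding results, so the work has essentially already been done; only the bookkeeping remains. The statement is a conjunction of two claims, invertibility and uniform boundedness, and I would handle them independently.

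For invertibility, I would simply invoke the earlier theorem that every leading principal minor of the character table of $S_n$ is invertible. That theorem was established by exhibiting the explicit $LU$-decomposition of the character table as $L = M(p,m) = (D_{\lambda,\mu})$ and $U = M(m,s) = (K_{\lambda,\mu})^{-1}$; since both $L$ and $U$ are triangular with nonzero diagonal (lower-triangular and upper-unitriangular respectively), every leading principal minor of their product is nonsingular. Applied to the first $F$ rows and columns, this immediately gives invertibility of the $F \times F$ leading principal minor for every $n$ in the valid range $n > 2t$.

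For the uniform bound on entries, I would appeal to the stability corollary that follows Lemma~\ref{lem:permStable}: the $F \times F$ leading principal minor of the character table of $S_n$ equals the $F \times F$ leading principal minor of the character table of $S_{n'}$ for every $n' \geq n$ (both in the valid range). This is the crucial point, because it says the minor is literally the same matrix for all sufficiently large $n$, so its entries take only finitely many values as $n$ varies and are in fact determined entirely by $t$ (through $F = F_t - 1$). Consequently, there is a function $B(t)$ (e.g., the maximum absolute value of the entries of this stabilized minor) that uniformly bounds the entries; this $B(t)$ depends only on $t$, as required.

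The only step requiring care is pinpointing where stability begins. From the proof strategy of Lemmas~\ref{lem:kostkaStable} and~\ref{lem:permStable} via Pieri-type combinatorial arguments, one sees that stability holds as soon as $n$ exceeds a threshold determined by the largest part involved in indexing the rows/columns of the $F \times F$ block, which in turn depends only on $t$. For $n$ in the interval $2t < n < n_0(t)$, there are only finitely many cases, each contributing a finite bound, so absorbing them into $B(t)$ is immediate. Since the main content (invertibility plus stability) is already in hand, I do not anticipate a genuine obstacle; the theorem is a direct assembly, and the only potential subtlety is verifying that the stability threshold itself is a function of $t$, which follows transparently from the combinatorial descriptions of the entries of $D(n)$ and $K(n)$.
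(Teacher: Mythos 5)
Your proposal is correct and matches the paper's intended argument: the paper leaves this theorem as an immediate consequence of the preceding invertibility theorem (for the "invertible" claim) and the stability corollary (for the "uniformly bounded" claim), which is exactly what you do. The only minor overcaution is your worry about "pinpointing where stability begins" — Lemmas~\ref{lem:kostkaStable} and~\ref{lem:permStable} already assert stability for every $n$ in the valid range $t < n/2$, so the $F \times F$ minor is literally constant over that whole range and no finite-case absorption is needed.
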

We seek a similar theorem for the zonal spherical analogue of characters, the so-called \emph{zonal characters}. Define the \emph{zonal character table} to be the $\lambda(n) \times \lambda(n)$ matrix $(\omega^\lambda_\rho)$ such that the $(\lambda,\rho)$-entry is given by $\omega^\lambda_\rho$. Such characters arise naturally as coefficients of the \emph{normalized zonal polynomials} $Z_\lambda'$, but first we should first introduce the unnormalized \emph{zonal polynomials} $Z_\lambda$, yet another basis for the space of symmetric functions that enjoys many of the same properties as the Schur symmetric functions.  

Zelevinsky credits Stanley with the observation that the coefficients of $Z_\lambda$ expressed in the power-sum basis are the $\lambda$-eigenvalues of the associates $A_\rho$ of the perfect matching association scheme $\mathcal{A}$~\cite[pg. 413]{MacDonald95}, which one may compare to Theorem~\ref{thm:eigs}: 
\begin{align*}
 Z_\lambda = |H_n| \sum_{\rho \vdash n} z_{2\rho}^{-1}\omega_\rho^\lambda p_\rho  = \sum_{\rho \vdash n} |\Omega_\rho| \omega_\rho^\lambda p_\rho.
 \end{align*} 
In other words, the coefficients of the zonal polynomials are precisely the entries of the character table of $\mathcal{A}$, which is in turn a column-normalization of $(\omega^\lambda_\rho)$. 
In particular, by inverting, normalizing, and setting $Z_\lambda' :=  \frac{|H_n|}{h(2\lambda)}Z_\lambda$, we obtain
 \begin{align*}
 p_\rho =  \sum_{\lambda \vdash n} \frac{|H_n|}{h(2\lambda)} \omega_\rho^\lambda Z_\lambda = \sum_{\lambda \vdash n} \omega_\rho^\lambda Z_\lambda'.
 \end{align*}
 Since column and row normalization does not affect invertibility, we can easily deduce the following two results.
\begin{theorem}\label{thm:minors}
Every leading principal minor of the zonal character table $(\omega_\rho^\lambda)$ is invertible with each entry no greater than one.
\end{theorem}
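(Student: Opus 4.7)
The plan is to parallel the argument just given for the ordinary character table of $S_n$, using the normalized zonal polynomials $Z'_\lambda$ in place of the Schur functions $s_\lambda$. The identity $p_\rho = \sum_{\lambda \vdash n} \omega^\lambda_\rho Z'_\lambda$ derived earlier identifies the zonal character table $(\omega^\lambda_\rho)$ as the transpose of the transition matrix $M(p, Z')$. Since transposition preserves invertibility of leading principal minors (the leading $k \times k$ submatrix of a transpose is the transpose of the leading $k \times k$ submatrix), it suffices to show that every leading principal minor of $M(p, Z')$ is nonsingular.

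To this end I would factor $M(p, Z') = M(p, m) \cdot M(m, Z')$. The left factor $M(p, m) = D(n)$ is lower-triangular with nonzero diagonal in reverse-lex order, as already recorded. For the right factor, I would invoke the classical expansion of zonal polynomials in the monomial basis (see~\cite[Ch.~VII]{MacDonald95}): zonal polynomials are the Jack polynomials at parameter $\alpha = 2$, and satisfy $Z_\lambda = \sum_{\mu \le \lambda} u^\lambda_\mu m_\mu$ where the sum ranges over partitions $\mu$ weakly below $\lambda$ in the dominance order, with $u^\lambda_\lambda \neq 0$. Since dominance order refines reverse-lex order, the matrix $M(Z, m)$ is upper-triangular with nonzero diagonal; rescaling rows by the positive scalars $|H_n|/h(2\lambda)$ and then inverting preserves this structure, so $M(m, Z')$ is upper-triangular with nonzero diagonal as well. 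Consequently, $M(p, Z')$ admits an $LU$-decomposition (after absorbing the diagonal of the $L$-factor if one insists on a unit-lower-triangular $L$), and the theorem on $LU$-decompositions cited earlier now forces every leading principal minor of $M(p, Z')$ to be nonsingular.

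Taking transposes yields the claimed invertibility of every leading principal minor of $(\omega^\lambda_\rho)$, and the entry-wise bound $|\omega^\lambda_\rho| \leq 1$ is precisely Proposition~\ref{prop:lessThanOne}. The only piece that is not already explicit in the paper is the upper-triangularity of the zonal-to-monomial transition matrix; since this is a well-known Jack-polynomial fact, I do not anticipate any substantive obstacle beyond locating the appropriate reference in MacDonald. The overall structure of the proof is thus a direct lift of the Schur case treated just above, with the Kostka matrix replaced by its zonal analogue.
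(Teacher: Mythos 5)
Your proof is correct and follows essentially the same route as the paper: both exploit the $LU$-factorization $M(p,Z') = D(n)\cdot M(m,Z')$ (the paper writes $M(m,Z)$ but means the normalized transition matrix) together with the triangularity of the zonal-to-monomial transition and Proposition~\ref{prop:lessThanOne} for the entry bound. One small slip: reverse-lex refines dominance, not the other way around, but the implication you actually need — that triangularity in dominance order yields triangularity in reverse-lex order — still holds.
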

\begin{proof}
The transition matrix $M(m,Z)$ is upper unitriangular~\cite[pg. 408]{MacDonald95}. The $LU$-decomposition of $M(p,Z)$ is $L=(D_{\lambda, \mu})$ and $U = M(m,Z)$.  Since $(\omega_\rho^\lambda)$ is equivalent to $M(p,Z)$ up to normalization, it follows that $(\omega_\rho^\lambda)$ is an invertible matrix that admits an $LU$-decomposition.  We deduce that the leading principal minors of $(\omega_\rho^\lambda)$ are invertible, and Proposition~\ref{prop:lessThanOne} shows the magnitudes of its entries are no greater than one.
\end{proof}
\begin{corollary}
Each leading principal minor of $\mathcal{A}$'s character table is invertible.
\end{corollary}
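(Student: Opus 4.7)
The plan is to reduce the corollary directly to Theorem~\ref{thm:minors} by observing that the character table of $\mathcal{A}$ differs from the zonal character table $(\omega^\lambda_\rho)$ only by a nonzero column scaling. Recall from Theorem~\ref{thm:eigs} that the eigenvalue of the $\lambda$-eigenspace of the $\rho$-associate $A_\rho$ is exactly $|\Omega_\rho|\,\omega^\lambda_\rho$, so if we denote the character table of $\mathcal{A}$ by $P$, then
\[
P_{\lambda,\rho} = |\Omega_\rho|\,\omega^\lambda_\rho,
\]
with the rows and columns indexed by partitions of $n$ in reverse-lexicographic order.

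First, I would write $P = \Omega D$ where $\Omega := (\omega^\lambda_\rho)$ is the zonal character table and $D$ is the diagonal matrix with entries $D_{\rho,\rho} := |\Omega_\rho|$. By Proposition~\ref{prop:sphereSize}, every sphere is non-empty, so $D_{\rho,\rho} > 0$ for each $\rho \vdash n$. In particular, $D$ is invertible, and for any $k$ the leading $k \times k$ principal submatrix of $P$ is the product of the leading $k \times k$ principal submatrix of $\Omega$ with the leading $k \times k$ principal submatrix of $D$ (this compatibility holds precisely because $D$ is diagonal).

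Taking determinants then yields
\[
\det\bigl(P_{[k],[k]}\bigr) = \det\bigl(\Omega_{[k],[k]}\bigr) \prod_{i=1}^{k} D_{\rho_i,\rho_i},
\]
where $\rho_1, \rho_2, \ldots, \rho_k$ are the first $k$ partitions of $n$ in reverse-lexicographic order. By Theorem~\ref{thm:minors}, the first factor is nonzero, and each $D_{\rho_i,\rho_i} = |\Omega_{\rho_i}|$ is nonzero, so the leading $k \times k$ principal minor of $P$ is invertible. Since $k$ was arbitrary, every leading principal minor of $\mathcal{A}$'s character table is invertible.

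There is essentially no obstacle here: the corollary is a one-line consequence of Theorem~\ref{thm:minors} combined with the fact that column scaling by a nonzero diagonal matrix preserves the invertibility of leading principal minors. The only content worth emphasizing is the identification of $P$ as $\Omega D$ via Theorem~\ref{thm:eigs}, which is precisely what makes the zonal characters the ``right'' normalization to analyze.
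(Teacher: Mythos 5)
Your proposal is correct and spells out exactly the reasoning the paper leaves implicit: the character table $P$ of $\mathcal{A}$ is the zonal character table $(\omega^\lambda_\rho)$ column-scaled by the diagonal matrix of sphere sizes $|\Omega_\rho|$, and since this diagonal scaling preserves leading principal minors up to a nonzero factor, Theorem~\ref{thm:minors} immediately transfers. This is precisely the intended argument.
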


We conclude this section by showing that the transition matrix $M(Z',m)$ enjoys a ``stability" property akin to Lemma~\ref{lem:kostkaStable}, which requires a brief digression into the theory of \emph{Jack symmetric functions}~\cite[pg. 376]{MacDonald95}.

It turns out that the Schur symmetric functions form the unique basis of the space of symmetric functions such that the transition matrix $M(s,m)$ is upper-unitriangular 
$$s_\lambda := m_\lambda + \sum_{\mu < \lambda}K_{\lambda \mu}m_\mu,$$ 
and the members of $\{s_\lambda\}$ are pairwise orthogonal with respect to the inner product 
$$\langle p_\lambda , p_\mu \rangle := 1^{l(\lambda)} z_\lambda \delta_{\lambda \mu}$$ 
where $\delta_{\lambda \mu} = 1$ if $\lambda = \mu$; otherwise, $\delta_{\lambda \mu} = 0$.

Similarly, the normalized zonal polynomials $\{Z_\lambda'\}$ form the unique basis of the space of symmetric functions such that the transition matrix $M(Z',m)$ is upper-unitriangular $Z_\lambda' := m_\lambda + \sum_{\mu < \lambda}K^{(2)}_{\lambda \mu}m_\mu$ and the members of $\{Z_\lambda'\}$ are pairwise orthogonal with respect to the inner product $\langle p_\lambda , p_\mu \rangle := 2^{l(\lambda)} z_\lambda \delta_{\lambda \mu}$.

Continuing in this manner, Jack showed that for any $\alpha \in \mathbb{R}$, there is a unique basis $\{P_\lambda\}$ for the space of symmetric functions that satisfies the following:
\begin{enumerate}
\item The transition matrix $M(P,m)$ is upper-unitriangular: $$P_\lambda := m_\lambda + \sum_{\mu < \lambda}K^{(\alpha)}_{\lambda, \mu}m_\mu, \text{ and }$$
\item the members of $\{P_\lambda\}$ are pairwise orthogonal with respect to the inner product $$\langle p_\lambda , p_\mu \rangle := \alpha^{l(\lambda)} z_\lambda \delta_{\lambda \mu}.$$
\end{enumerate}

We are interested in bounding entries of the leading $F \times F$ principal minor of the matrix $(K^{(\alpha)}_{\lambda,\mu})$ for $\alpha = 2$. MacDonald gives a combinatorial rule for computing these entries~\cite{MacDonald95}, which we describe below.  First, for any cell $s \in \lambda$, we define
\[ b_{\lambda}^{(\alpha)}(s) := \frac{\alpha a_\lambda(s) + l_\lambda(s) + 1}{\alpha a_\lambda(s) + l_\lambda(s) + \alpha},\]
which is less than 1 for all $\alpha > 1$.  Let
\[ \Psi_{\lambda / \mu}^{(\alpha )} := \prod_{s \in R_{\lambda / \mu} - C_{\lambda / \mu}} \frac{b_\mu^{(\alpha)}(s)}{b_\lambda^{(\alpha)}(s)}\]
where $C_{\lambda / \mu}$ (resp. $R_{\lambda / \mu}$) is the union of the columns (resp. rows) that intersect $\lambda - \mu$ (here, columns and tableaux are treated as sets whose elements are cells).
Then let
\[ \Psi_T^{(\alpha)} :=  \prod^r_{i=1} \Psi_{\lambda^{(i)} / \lambda^{(i-1)}}^{(\alpha)}, \]
where $0 = \lambda^{(0)} \subset \lambda^{(1)} \subset \cdots \subset \lambda^{(r)}$ is the sequence of partitions determined by the tableau $T$ so that each skew diagram $\lambda^{(i)} - \lambda^{(i-1)}$ is a horizontal strip (contains at most one cell in any column).  Finally, we have
\[ K^{(\alpha)}_{\lambda \mu} = \sum_T \Psi_T^{(\alpha)} \]
where $T$ ranges over semistandard tableaux of shape $\lambda$ and weight $\mu$.  Observe that when $\alpha = 1$, these are simply the Kostka numbers.  It is now simple matter to deduce that the Kostka numbers upperbound the $\alpha$-Kostka numbers.
\begin{lemma}\label{lem:kostkaStable2}
If $\alpha \geq 1$, then $K_{\lambda,\mu} \geq K^{(\alpha)}_{\lambda,\mu}$ for all $\lambda,\mu \vdash n$.
\end{lemma}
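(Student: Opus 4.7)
The plan is to reduce the inequality to a cell-by-cell comparison of the quantities $b^{(\alpha)}$. Observe that the ordinary Kostka number $K_{\lambda,\mu}$ equals the number of semistandard tableaux of shape $\lambda$ and weight $\mu$, which is exactly the index set of the sum $K^{(\alpha)}_{\lambda,\mu} = \sum_T \Psi_T^{(\alpha)}$. Since each $b^{(\alpha)}(s)$ has positive numerator and denominator for $\alpha \geq 1$, each factor $\Psi_T^{(\alpha)}$ is nonnegative. So it suffices to show $\Psi_T^{(\alpha)} \leq 1$ for every semistandard $T$ of shape $\lambda$ and weight $\mu$; summing over $T$ then yields $K^{(\alpha)}_{\lambda,\mu} \leq K_{\lambda,\mu}$.

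Because $\Psi_T^{(\alpha)}$ factors as a product of the $\Psi_{\lambda^{(i)}/\lambda^{(i-1)}}^{(\alpha)}$, which in turn are products of ratios $b_{\lambda^{(i-1)}}^{(\alpha)}(s)/b_{\lambda^{(i)}}^{(\alpha)}(s)$, the entire reduction boils down to the following local claim: for partitions $\nu \subset \pi$ with $\pi/\nu$ a horizontal strip, and for any $s \in R_{\pi/\nu} - C_{\pi/\nu}$, one has $b_\nu^{(\alpha)}(s) \leq b_\pi^{(\alpha)}(s)$. Since $s$ lies in a column disjoint from $\pi - \nu$, no cells are added to the column of $s$ when passing from $\nu$ to $\pi$, so $l_\nu(s) = l_\pi(s)$. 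Conversely, $s$ lies in a row that intersects $\pi - \nu$, and because $\nu \subseteq \pi$ as partitions forces all added cells to sit at the right-hand ends of existing rows, we get $a_\pi(s) \geq a_\nu(s) + 1$.

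The final step is the elementary inequality. Rewriting
\[
b^{(\alpha)}(s) = 1 - \frac{\alpha - 1}{\alpha\, a(s) + l(s) + \alpha},
\]
we see that for $\alpha \geq 1$ the subtracted quantity is nonnegative and strictly decreases as $a(s)$ increases (with $l(s)$ fixed). Combining with $l_\nu(s) = l_\pi(s)$ and $a_\nu(s) < a_\pi(s)$ immediately gives $b_\nu^{(\alpha)}(s) \leq b_\pi^{(\alpha)}(s)$, so the ratio is at most one. Multiplying these bounds through the definition of $\Psi_T^{(\alpha)}$ proves $\Psi_T^{(\alpha)} \leq 1$, completing the argument. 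I do not anticipate a substantive obstacle; the only point needing care is the cell-wise arm/leg comparison, which follows cleanly from the definitions of $R_{\pi/\nu}$ and $C_{\pi/\nu}$.
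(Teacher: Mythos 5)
Your proof is correct and follows essentially the same approach as the paper: reduce to showing $\Psi_T^{(\alpha)}\leq 1$ by comparison with the $\alpha=1$ case, then bound each factor via the cell-wise monotonicity of $b^{(\alpha)}(s)$. You fill in a detail the paper elides — namely, that for $s\in R_{\pi/\nu}-C_{\pi/\nu}$ the leg length is unchanged while the arm length strictly increases, which together with your rewriting $b^{(\alpha)}(s)=1-(\alpha-1)/(\alpha a(s)+l(s)+\alpha)$ justifies the inequality $b_\nu^{(\alpha)}(s)\leq b_\pi^{(\alpha)}(s)$ that the paper simply asserts.
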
  
\begin{proof}
Since $K_{\lambda,\mu} = K^{(\alpha)}_{\lambda,\mu}$ for $\alpha = 1$, we may assume that $\alpha > 1$.  It suffices to show that $\Psi_{\lambda / \mu}^{(\alpha )} < \Psi_{\lambda / \mu}^{(1)} = 1$.  Since $\lambda$ covers $\mu$, we have $b_{\lambda}^{(\alpha)}(s) > b_{\mu}^{(\alpha)}(s)$, implying that 
\[ \Psi_{\lambda / \mu}^{(\alpha )} = \prod_{s \in R_{\lambda / \mu} - C_{\lambda / \mu}} \frac{b_\mu^{(\alpha)}(s)}{b_\lambda^{(\alpha)}(s)} < 1,\]
as desired.
\end{proof}
\noindent Lemmas~\ref{lem:kostkaStable2} and~\ref{lem:kostkaStable} now imply the following.
\begin{corollary}\label{cor:bound}
The $(i,j)$-entry of the $F \times F$ leading principal minor of $K^{(2)}(n)$ is less than or equal to the $(i,j)$-entry of the $F \times F$ leading principal minor of $K(n')$ for all $n' \geq n$ and $1 \leq i,j \leq F$.  Moreover, the magnitudes of the entries of the $F \times F$ leading principal minor of $K^{(2)}(n)$ are bounded above by a function of $t$ for all $n$.
\end{corollary}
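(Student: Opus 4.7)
The plan is that this corollary is essentially a direct combination of the two lemmas immediately preceding it, so the proof should be short. First I would observe that both $K^{(2)}(n)$ and $K(n)$ are indexed by $\lambda(n)$ in reverse-lexicographic order, and that the first $F$ entries of this ordering correspond to the fat partitions other than $(n-t,1^t)$. Since each fat partition has the form $(n-k,\mu)$ for some fixed $\mu \vdash k$ with $k \leq t$, the partitions indexing the $F\times F$ minor match up naturally across different values of $n$ (once $n$ is large enough, say $n \geq 2t+1$, so that fatness is well-defined).

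For the first claim, I would apply Lemma~\ref{lem:kostkaStable2} with $\alpha = 2$ entrywise, yielding
\[ K^{(2)}_{\lambda_i,\lambda_j} \leq K_{\lambda_i,\lambda_j} \]
for every pair of partitions $\lambda_i,\lambda_j$ in the first $F$ positions of reverse-lex order; note both quantities are nonnegative since the $\Psi_T^{(\alpha)}$ are products of positive ratios. Then Lemma~\ref{lem:kostkaStable} says the right-hand side agrees with the corresponding entry of $K(n')$ for all $n' \geq n$, giving the desired entrywise inequality.

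For the second claim, fix $n_0 = 2t+1$ (or any value large enough for fat partitions and their associated Kostka entries to stabilize). By Lemma~\ref{lem:kostkaStable}, the $F \times F$ leading principal minor of $K(n)$ is independent of $n$ for $n \geq n_0$, so its finitely many entries are bounded by some constant $M(t)$ depending only on $t$. Combined with the entrywise inequality from the first paragraph, this bounds the $F \times F$ leading principal minor of $K^{(2)}(n)$ by $M(t)$ for all $n \geq n_0$. For the remaining finitely many values $n < n_0$, the $F \times F$ principal minor of $K^{(2)}(n)$ is itself a fixed finite object, so we may enlarge $M(t)$ to absorb those cases as well.

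The corollary is therefore a one-line consequence of the two lemmas plus the standard observation that a quantity that stabilizes as $n \to \infty$ and is well-defined for finitely many small $n$ is uniformly bounded in a way depending only on $t$. I do not anticipate any genuine obstacle; the only care needed is ensuring the reverse-lexicographic indexing of fat partitions is consistent across $n$, which holds because a fat partition is determined by its ``tail'' $\mu \vdash k$ for $k \leq t$.
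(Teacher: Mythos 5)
Your proof is correct and matches the paper's intent: the paper simply asserts that Lemmas~\ref{lem:kostkaStable2} and~\ref{lem:kostkaStable} ``now imply'' the corollary without spelling out the deduction, and your argument fills in exactly that chain (entrywise domination $K^{(2)}_{\lambda,\mu}\le K_{\lambda,\mu}$ from Lemma~\ref{lem:kostkaStable2} with $\alpha=2$, then stability of the $K(n)$ minor from Lemma~\ref{lem:kostkaStable}, plus nonnegativity of the $\Psi_T^{(\alpha)}$ to convert entries into magnitudes).
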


\section{Constructing the Pseudo-Adjacency Matrix}\label{sec:pseudo}

Having completed the legwork, we now show that for sufficiently large $n$, there exists a pseudo-adjacency matrix of $\Gamma_t$ that meets the ratio bound with equality, thereby proving the first part of the main result. Define
$$\zeta := -\frac{1}{(\!(2n-1)\!)_t-1}.$$
It suffices to show there exists a pseudo-adjacency matrix $\widetilde{A}(\Gamma_t)$ whose eigenvalues satisfy
\[\eta_{\min}/\eta_1 = \zeta,\]
as this would imply via the ratio bound that $|S| \leq (2(n-t)-1)!!$ for any independent set $S$ of $\Gamma_t$. 
To this end, we first show that for any $t \in \mathbb{N}$ there exists a pseudomatrix $\widetilde{A}(\Gamma_t)$ that has the desired eigenvalues on the fat even partitions. 
We then show that the magnitudes of the eigenvalues of $\widetilde{A}(\Gamma_t)$ corresponding to non-fat even partitions are smaller than the eigenvalues of the fat even partitions for sufficiently large $n$. 
\begin{theorem}\label{thm:main}
Let $A_{1}, A_{2}, \cdots ,A_{{F}}$ be the first $F = F_t -1$ associates of $\mathcal{A}$ in reverse-lexicographic order. Then there exists a constant $B_t > 0$ depending only on $t$ and a linear combination of fat associates
\[\widetilde{A}(\Gamma_t) := \sum_{j=1}^{F} x_j A_j\] 
such that $\max_j |x_j| \leq B_t/(2n-2)!!$, with eigenvalues $\eta_1,\eta_2,\cdots,\eta_F$ satisfying
\[\eta_i = \sum_{j=1}^{F} x_jP_{i,j} = \begin{cases} 
      1 & \emph{if } i=1 \\
      \zeta & \emph{if } 1 < i \leq F. 
   \end{cases}\]
\end{theorem}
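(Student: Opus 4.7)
The plan is to translate the eigenvalue constraints into an $F \times F$ linear system, solve it using the invertibility results of Section~\ref{sec:char}, and then bound the solution using the stability of the relevant transition matrices. By Theorem~\ref{thm:eigs}, setting $\widetilde{A}(\Gamma_t) = \sum_{j=1}^F x_j A_j$ produces eigenvalues $\eta_i = \sum_{j=1}^F x_j P_{i,j}$ on the $i$th irreducible, so the desired conditions become the $F \times F$ linear system $P_F x = b$, where $P_F$ is the leading principal minor of the character table of $\mathcal{A}$ and $b = (1, \zeta, \ldots, \zeta)^{\top}$. Since $P_F$ is invertible by the corollary following Theorem~\ref{thm:minors}, this system has a unique solution $x = P_F^{-1} b$.

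It remains to show $\max_j |x_j| \leq B_t/(2n-2)!!$. Using $P_{i,j} = |\Omega_j|\,\omega^i_j$ from Theorem~\ref{thm:eigs}, I factor $P_F = \omega_F \cdot \operatorname{diag}(|\Omega_1|, \ldots, |\Omega_F|)$, where $\omega_F$ is the $F \times F$ leading principal minor of the zonal character table, so that
\[ x_j = \frac{1}{|\Omega_j|}\sum_{i=1}^F (\omega_F^{-1})_{j,i}\,b_i.\]
Since $|\zeta| < 1$ we have $\|b\|_\infty \leq 1$, and since $A_1, \ldots, A_F$ correspond to fat partitions each possessing a part of size at least $n-t$, Lemma~\ref{lem:spheres} yields $1/|\Omega_j| \leq C'_t/(2n-2)!!$ for some constant $C'_t$ depending only on $t$. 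Thus it suffices to bound the entries of $\omega_F^{-1}$ uniformly in $n$ by a function of $t$, and then $|x_j| \leq B_t/(2n-2)!!$ follows immediately.

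For the uniform bound on $\omega_F^{-1}$, the key is the matrix identity $D(n) = (\omega_\rho^\lambda) \cdot K^{(2)}(n)$, which follows from $p_\rho = \sum_\lambda \omega_\rho^\lambda Z'_\lambda$ together with $Z'_\lambda = \sum_\mu K^{(2)}_{\lambda,\mu}\,m_\mu$. Because $D(n)$ is lower-triangular and $K^{(2)}(n)$ is upper-unitriangular in reverse-lexicographic order, the $F \times F$ leading principal minor of their product equals the product of their $F \times F$ minors, whence $\omega_F^{-1} = K^{(2)}(n)_F \cdot D(n)_F^{-1}$. By Corollary~\ref{cor:bound} the entries of $K^{(2)}(n)_F$ are bounded by a function of $t$, and by Lemma~\ref{lem:permStable} together with the triangularity of $D(n)$ the entries of $D(n)_F^{-1}$ are likewise bounded by a function of $t$. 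This last step is the main obstacle: mere invertibility of $\omega_F$ from Theorem~\ref{thm:minors} is insufficient, and one must exploit the LU-factorization through $D(n)$ and $K^{(2)}(n)$ along with the stability of their leading principal minors to obtain a uniform-in-$n$ bound on the inverse.
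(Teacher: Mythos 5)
Your proposal is correct and follows essentially the same route as the paper: set up the $F\times F$ system $P_F x = b$, invert via Theorem~\ref{thm:minors}, factor out the sphere sizes so that $P_F = \omega_F \cdot \operatorname{diag}(|\Omega_j|)$, bound $1/|\Omega_j|$ by Lemma~\ref{lem:spheres}, and bound $\omega_F^{-1}$ through the $LU$-type identity $\omega_F^{-1} = K^{(2)}(n)_F\,D(n)_F^{-1}$, using the stability of $D(n)_F^{-1}$ (Lemma~\ref{lem:permStable}) and the uniform bound on $K^{(2)}(n)_F$ (Corollary~\ref{cor:bound}). The one place your wording is slightly off is the sentence about "the $F\times F$ leading principal minor of their product": the relevant minor compatibility is that $(\omega K^{(2)})_F = \omega_F K^{(2)}_F$ (because the lower-left block of the upper-triangular $K^{(2)}(n)$ vanishes) and that $(K^{(2)}(n)^{-1})_F = (K^{(2)}(n)_F)^{-1}$ and $(D(n)^{-1})_F = (D(n)_F)^{-1}$ (because leading principal submatrices commute with inversion for triangular matrices); stating it as a property of the product $D\cdot K^{(2)}$ is a little misleading, but the underlying computation you perform is the right one and matches the paper.
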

\begin{proof}
For any matrix $A$ indexed by integer partitions of $n$, let $A_F$ denote its leading principal minor induced by all the fat partitions except for $(n-t,1^t)$ The entries of $M := P_{F}$ correspond to eigenvalues of the fat eigenspaces excluding $(n-t,1^t)$ of the first $F$ associates. Let $x = (x_1,\cdots,x_F)$ and $b := (1,\zeta,\cdots,\zeta)$.  
By Theorem~\ref{thm:minors}, we have that $M$ is invertible, thus $x$ is the unique solution to $Mx=b$.  
To bound $x$, observe that
\begin{align*}
x = M^{-1}b &= (M(p,Z')_F (\Omega_\lambda)_F)^{-1}b \\
&= (\Omega_\lambda)_F^{-1} M(p,Z')_F^{-1}b \\
&= (\Omega_\lambda)_F^{-1} M(Z',m)_F (D(n))_F^{-1}b.
\end{align*}
By Lemma~\ref{lem:permStable}, the magnitudes of the entries of $(D(n))_F^{-1}$ are upperbounded by some function of $t$.  By Corollary~\ref{cor:bound}, the magnitudes of the entries of $M(Z',m)_F$ are upperbounded by some function of $t$. By Lemma~\ref{lem:spheres}, we deduce that
\[|\Omega_\lambda |^{-1} \leq \frac{2(n-t)(2t)^t}{2^nn!} \leq \frac{B'_t}{(2n-2)!!},\]
where $B'_t > 0$ depends only on $t$, thus there is a $B_t$ such that $\max_j |x_j| \leq B_t/(2n-2)!!$.
\end{proof}
\begin{theorem}\label{thm:nonfat}
Continuing under the assumptions and notation of Theorem~\ref{thm:main}, let $\rho \vdash 2n$ be a non-fat even partition with corresponding eigenvalue $\eta_\rho =  \sum^F_{j=1} x_j P_{\rho,j}$.  Then
$$| \eta_{\rho} | \leq G_t|\zeta|/\sqrt{n} = o(| \zeta |),$$
where $G_t > 0$ depends only on $t$.
\end{theorem}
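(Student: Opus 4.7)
The plan is to bound each summand $|x_j P_{\rho,j}|$ in the expression $\eta_\rho = \sum_{j=1}^{F} x_j P_{\rho,j}$ separately and then sum over the constantly many fat associates. The bound $|x_j| \leq B_t/(2n-2)!!$ is already furnished by Theorem~\ref{thm:main}, so the task reduces to controlling $|P_{\rho,j}|$ for non-fat $\rho$ and fat $j$. The naive estimate $|P_{\rho,j}| = ||\Omega_j|\omega_j^\rho| \leq |\Omega_j|$ coming from Proposition~\ref{prop:lessThanOne} is too weak, since $|\Omega_j|$ and $1/(2n-2)!!$ only cancel up to a constant. The key is to exploit the fact that non-fat even partitions $\rho$ have very large dimension: by Lemma~\ref{lem:medDims}, $d_\rho \geq C_t (2n)^{2(t+1)}$.

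This dimensional surplus can be extracted via Lemma~\ref{lem:eigsbound} applied to a single associate $A_j$, which yields
\[ |P_{\rho,j}| \leq \sqrt{(2n-1)!! \cdot |\Omega_j|/d_\rho}. \]
Since each $j$ indexes a fat partition (with a part of size at least $n-t$), Lemma~\ref{lem:spheres} supplies $|\Omega_j| \leq 2^{n+1}(n-1)!$, and Proposition~\ref{prop:sqrt} supplies $(2n-1)!! \leq 2^n n!/\sqrt{\pi n}$. Substituting these together with the dimension lower bound, and using $(2n-2)!! = 2^{n-1}(n-1)!$, a direct computation shows the exponential and factorial factors cancel to yield
\[ |x_j P_{\rho,j}| = O\!\left(\frac{n^{1/4}}{(2n)^{t+1}}\right), \]
where the implicit constant depends only on $t$.

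Summing over the $F = F_t - 1$ fat associates (constant in $n$) gives $|\eta_\rho| \leq H_t n^{1/4}/(2n)^{t+1}$ for a constant $H_t$ depending only on $t$. Since $(\!(2n-1)\!)_t \sim (2n)^t$, we have $|\zeta|/\sqrt{n} = \Theta(1/((2n)^t \sqrt{n}))$, so the ratio
\[ \frac{|\eta_\rho|}{|\zeta|/\sqrt{n}} = O(n^{-1/4}) \longrightarrow 0. \]
This simultaneously yields the claimed inequality $|\eta_\rho| \leq G_t |\zeta|/\sqrt{n}$ for some constant $G_t$ depending only on $t$, and the asymptotic statement $|\eta_\rho| = o(|\zeta|)$.

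I expect the main obstacle to be purely bookkeeping: one must verify that the quadratic gap between the exponent $2(t+1)$ in Lemma~\ref{lem:medDims} and the exponent $t$ in $|\zeta|$ really does survive the square root in Lemma~\ref{lem:eigsbound}, producing a surplus of $(2n)^{t+1}$ versus $(2n)^t$ that absorbs both the $\sqrt{n}$ margin and leaves an additional $n^{-1/4}$ to spare. A subtle point worth double-checking is that $G_t$ can be chosen independently of $\rho$; this holds because $\rho$ enters only through the uniform dimension bound of Lemma~\ref{lem:medDims}, and the other ingredients ($|\mathcal{M}_{2n}|$, the $|x_j|$, the fat $|\Omega_j|$) depend on $n$ and $t$ alone.
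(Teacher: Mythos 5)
Your proof is correct and follows essentially the same route as the paper: apply the triangle inequality over the $F$ fat associates, bound $|x_j|$ via Theorem~\ref{thm:main}, bound $|P_{\rho,j}|$ by applying Lemma~\ref{lem:eigsbound} to a single associate together with the sphere-size bound from Lemma~\ref{lem:spheres} and the dimension lower bound from Lemma~\ref{lem:medDims}, and finish with Proposition~\ref{prop:sqrt}. Your bookkeeping (the $n^{1/4}/(2n)^{t+1}$ intermediate bound and the resulting $O(n^{-1/4})$ ratio) checks out, and your remark that $G_t$ is uniform in $\rho$ because Lemma~\ref{lem:medDims} is uniform is exactly the right point to verify.
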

\begin{proof}
Putting everything together, we have
\begin{align*}
|\eta_\rho| &= \big| \sum_{j=1}^{F_t}x_j P_{\rho, j} \big|\\
&\leq F_t~\max_j |x_j|~\max_j |P_{\rho, j}|\\
&\leq F_t \frac{B_t}{(2n-2)!!}~\max_j |P_{\rho, j}| &\quad(\text{Theorem}~\ref{thm:main})\\
&\leq F_t \frac{B_t}{(2n-2)!!}~\sqrt{\frac{|\mathcal{M}_{2n}| 2^{n+1}(n-1)!}{C_t(2n)^{2(t+1)}}} &\quad(\text{Lemma~\ref{lem:medDims} \& Lemma~\ref{lem:eigsbound}})\\
&\leq F_tB_tD_t \frac{(2n-1)!!}{(2n-2)!!}\frac{|\zeta|}{2n}~~&\text{ where }D_t > 0\text{ depends only on }t\\
&= G_t|\zeta|/\sqrt{n} &\quad(\text{Proposition~\ref{prop:sqrt}})
\end{align*}
where $G_t > 0$ depends only on $t$, as desired.
\end{proof}
The only eigenvalue of $\widetilde{A}(\Gamma_t)$ not accounted for by Theorem~\ref{thm:main} or Theorem~\ref{thm:nonfat} is $\eta_{F_t} = \eta_{(n-t,1^t)}$, which we now handle separately in the theorem below. 
\begin{theorem}\label{thm:fattest}
$\eta_{F_t} = \eta_{(n-t,1^t)} = \zeta$.
\end{theorem}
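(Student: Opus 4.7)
The plan is to exploit the defining property of a pseudo-adjacency matrix together with the Fourier-support theorem of Section~\ref{sec:low}. Since any canonically $t$-intersecting family $\mathcal{F} \subseteq \mathcal{M}_{2n}$ is an independent set of $\Gamma_t$ and the nonzero entries of $\widetilde{A}(\Gamma_t)$ are concentrated on $E(\Gamma_t)$, we have $1_{\mathcal{F}}^{\top} \widetilde{A}(\Gamma_t) 1_{\mathcal{F}} = 0$. By Theorem~\ref{thm:supp}, $1_{\mathcal{F}}$ lies in $U_t$ and thus decomposes orthogonally as $1_{\mathcal{F}} = \sum_{\lambda \text{ fat}} f_{\lambda}$ across the fat even isotypic components. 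Theorem~\ref{thm:main} already pins down every eigenvalue of $\widetilde{A}(\Gamma_t)$ in this range except $\eta_{(n-t,1^t)}$, so expanding the quadratic form leaves a single unknown to solve for.

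Specifically, I would pick the canonically $t$-intersecting family $\mathcal{F}_S$ provided by Lemma~\ref{lem:fatnonzero}, so that $\beta := \|f_{(n-t,1^t)}\|^2 > 0$, and set $\gamma := \sum_{\lambda \text{ fat},\, \lambda \neq (n),(n-t,1^t)} \|f_{\lambda}\|^2$. Writing $\alpha := |\mathcal{F}_S|/|\mathcal{M}_{2n}| = 1/(\!(2n-1)\!)_t$, the projection onto the trivial component gives $\|f_{(n)}\|^2 = \alpha^2 |\mathcal{M}_{2n}|$, while $\|1_{\mathcal{F}_S}\|^2 = \alpha |\mathcal{M}_{2n}|$ yields $\gamma + \beta = \alpha(1-\alpha)|\mathcal{M}_{2n}|$. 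Applying Theorem~\ref{thm:main}, the quadratic-form identity becomes
\[ 0 \;=\; 1 \cdot \alpha^2 |\mathcal{M}_{2n}| \;+\; \zeta\,\gamma \;+\; \eta_{(n-t,1^t)}\,\beta.\]

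Finally, a short manipulation using $\zeta = -\alpha/(1-\alpha)$ gives $\alpha^2 |\mathcal{M}_{2n}| = -\zeta(\gamma + \beta)$. Substituting this into the displayed identity collapses it to $(\eta_{(n-t,1^t)} - \zeta)\,\beta = 0$, and since $\beta > 0$ we conclude $\eta_{(n-t,1^t)} = \zeta$.

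The crux of the argument is really Lemma~\ref{lem:fatnonzero}: without some canonically $t$-intersecting family witnessing nonzero Fourier mass on $2(n-t,1^t)$, the final equation is a tautology and we cannot isolate $\eta_{(n-t,1^t)}$. Everything else is bookkeeping across the orthogonal decomposition $\mathbb{R}[\mathcal{M}_{2n}] \cong \bigoplus_{\lambda \vdash n} 2\lambda$ supplied by Theorem~\ref{thm:decomp}. I don't anticipate a real technical obstacle here because the key non-triviality — that the skinniest fat eigenspace genuinely detects some canonical family — has already been discharged in Section~\ref{sec:low}.
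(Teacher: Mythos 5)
Your proof is correct and follows essentially the same route as the paper: restrict $1_{\mathcal{F}_S}$ to the fat isotypic components via Theorem~\ref{thm:supp}, expand the quadratic form $1_{\mathcal{F}_S}^{\top}\widetilde{A}(\Gamma_t)1_{\mathcal{F}_S}=0$, invoke Lemma~\ref{lem:fatnonzero} to guarantee nonzero mass on $2(n-t,1^t)$, and then use the identity $\zeta=-\alpha/(1-\alpha)$ to isolate $\eta_{(n-t,1^t)}$. The only cosmetic difference is that you track the $|\mathcal{M}_{2n}|$ normalization explicitly (which is actually a touch cleaner than the paper's rendition of Equation~(\ref{eq:ratio})), and you fix the family $\mathcal{F}_S$ from Lemma~\ref{lem:fatnonzero} from the outset rather than mid-proof.
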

\begin{proof}
By Theorem~\ref{thm:supp}, we can write the characteristic vector $f$ of a canonically intersecting family as 
\[f = \sum_{i=1}^{|\mathcal{M}_{2n}|} a_iv_i = \sum_{\lambda \vdash n}\sum_{i=1}^{\dim 2\lambda} a_{\lambda,i} v_{\lambda,i} = \sum_{\lambda \text{ fat}}\sum_{i=1}^{\dim 2\lambda} a_{\lambda,i} v_{\lambda,i},\]
where $\{v_{\lambda,i}\}_{i=1}^{\dim 2\lambda}$ is an orthonormal set of eigenvectors of $\widetilde{A}(\Gamma_t)$ that forms a basis for the irreducible $2\lambda$.  Let $w_\lambda = a_{\lambda,1}^2 + a_{\lambda,2}^2 + \cdots + a_{\lambda,\dim 2\lambda}^2$.
By Lemma~\ref{lem:fatnonzero} we have $\hat{f}(2(n-t,1^t)) \neq 0$ for some $f$, which implies that $w_{(n-t,1^t)} \neq 0$. Setting $\alpha = |\mathcal{F}_T|/(2n-1)!!$ and revisiting Equation (\ref{eq:ratio}) gives us
\[ 0 = f^\top\widetilde{A}(\Gamma_t)f = \sum_{\lambda \text{ fat}} \eta_\lambda w_\lambda = \alpha^2 + \zeta(\alpha-\alpha^2 - w_{(n-t,1^t)}) + \eta_{(n-t,1^t)} w_{(n-t,1^t)}. \]
The definition of $\zeta$ implies $\zeta(\alpha-\alpha^2) = -\alpha^2$. Since $w_{(n-t,1^t)} \neq 0$, we have $\eta_{(n-t,1^t)} = \zeta$.
\end{proof}
\begin{proof}[Proof of the First Part of Theorem~\ref{thm:tekr}]
\noindent By Theorem~\ref{thm:main} and Theorem~\ref{thm:fattest}, there exists a pseudo-adjacency matrix of the perfect matching $t$-derangement graph $\widetilde{A}(\Gamma_t)$ with eigenvalues satisfying $\eta_1 = 1$ and $\eta_\lambda = \zeta$ for each non-trivial fat even partition $\lambda \vdash 2n$. By Theorem~\ref{thm:nonfat}, for each non-fat even partition $\rho \vdash 2n$, we have $|\eta_\rho| < |\zeta|$ for $n$ sufficiently large, thus $\zeta$ is the minimum eigenvalue of $\widetilde{A}(\Gamma_t)$ for sufficiently large $n$. By our choice of $\zeta$ and the ratio bound, we have that
\[ |\mathcal{F}| \leq |V|\frac{-\eta_{\min}}{\eta_1 - \eta_{\min}} = (2n-1)!! \frac{-\zeta}{1 - \zeta} = (2(n-t)-1)!!\]
for any $t$-intersecting family $\mathcal{F} \subseteq \mathcal{M}_{2n}$, as desired. 
\end{proof}
\noindent We say that two families $\mathcal{F,G} \subseteq \mathcal{M}_{2n}$ are \emph{$t$-cross-intersecting} if $|m \cap m'| \geq t$ for all $m \in \mathcal{F}, m' \in \mathcal{G}$. 
As a bonus, we have the following cross-intersecting variant of Theorem~\ref{thm:tekr} that follows easily from the \emph{cross-ratio bound} stated below.
\begin{theorem}[Cross-Ratio Bound~\cite{Ellis11}]
Let $\widetilde{A}(\Gamma)$ be a pseudo-adjacency matrix of a graph $\Gamma$ with eigenvalues $|\eta_1| \geq |\eta_2| \geq \cdots \geq |\eta_{n}|$ and corresponding eigenvectors $v_1, v_2 \cdots , v_n$. Let $S,T \subseteq V$ be sets of vertices such that there are no edges between $S$ and $T$. Then 
\[\sqrt{\frac{|S|\cdot |T|}{|V|^2}} \leq \frac{|\eta_2|}{\eta_1 + |\eta_2|}.\] 
If equality holds, then
\[ 1_S,1_T \in \emph{Span}(\{v_1\} \cup \{v_i : |\eta_i| = |\eta_2|\}).\]
\end{theorem}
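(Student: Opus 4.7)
The plan is to mimic the proof of the ordinary ratio bound (Equation~\eqref{eq:ratio}) using the bilinear form $1_S^{\top}\widetilde{A}(\Gamma)1_T$ in place of the quadratic form $1_S^{\top}\widetilde{A}(\Gamma)1_S$. First I would expand $f := 1_S = \sum_{i=1}^{|V|} a_i v_i$ and $g := 1_T = \sum_{i=1}^{|V|} b_i v_i$ in the orthonormal eigenbasis, taking $v_1 = \mathbf{1}/\sqrt{|V|}$ so that $a_1 = |S|/\sqrt{|V|}$ and $b_1 = |T|/\sqrt{|V|}$. The constant row-sum hypothesis on a pseudo-adjacency matrix makes $v_1$ an eigenvector with eigenvalue $\eta_1$, and Parseval's identity yields $\sum_{i \geq 2} a_i^2 = |S|(1 - |S|/|V|)$ and the analogous expression for $b$.

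Next, since $\widetilde{A}(\Gamma)_{uv} \neq 0$ only if $uv \in E(\Gamma)$ and there are no edges between $S$ and $T$, the bilinear form vanishes: $f^{\top}\widetilde{A}(\Gamma)g = 0$. Rewriting this in the eigenbasis and applying Cauchy-Schwarz together with the bound $|\eta_i| \leq |\eta_2|$ for $i \geq 2$ gives
\[ \eta_1 \frac{|S||T|}{|V|} \;=\; -\sum_{i=2}^{|V|} \eta_i a_i b_i \;\leq\; |\eta_2|\,\sqrt{|S||T|}\,\sqrt{(1-|S|/|V|)(1-|T|/|V|)}. \]
Writing $s := |S|/|V|$ and $t := |T|/|V|$ and dividing through by $\eta_1\sqrt{|S||T|}$ yields $\sqrt{st} \leq (|\eta_2|/\eta_1)\sqrt{(1-s)(1-t)}$. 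A short AM-GM step then gives $\sqrt{(1-s)(1-t)} \leq 1 - \sqrt{st}$, since $(1 - \sqrt{st})^2 - (1-s)(1-t) = s + t - 2\sqrt{st} \geq 0$; rearranging the resulting inequality $\sqrt{st} \leq (|\eta_2|/\eta_1)(1-\sqrt{st})$ produces the claimed bound.

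For the equality characterization, Cauchy-Schwarz attains equality only when $(a_i)_{i\geq 2}$ and $(b_i)_{i\geq 2}$ are proportional and, for the spectral weight bound to be saturated, both vectors must be supported entirely on eigenspaces with $|\eta_i| = |\eta_2|$; together with the $v_1$ coordinates this gives the stated span conclusion. The main (and only real) subtlety will be the sign bookkeeping in the Cauchy-Schwarz step: because $\eta_1 a_1 b_1 > 0$, the sum $\sum_{i \geq 2} \eta_i a_i b_i$ must attain its most negative possible value, so the nonzero $a_i, b_i$ (for $i \geq 2$) must sit on eigenvectors $v_i$ with $|\eta_i| = |\eta_2|$ in such a way that every contributing product $\eta_i a_i b_i$ is negative. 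Aside from this bookkeeping, the argument is routine and re-uses exactly the tools that drive Theorem~\ref{thm:pseudoHoffman}.
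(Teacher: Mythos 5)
Your proof is correct. The paper does not give its own proof of the Cross-Ratio Bound (it is cited from Ellis), but your argument is the natural bilinear-form extension of the paper's sketch of the ordinary ratio bound in Equation~(\ref{eq:ratio}): the vanishing of $1_S^\top\widetilde{A}(\Gamma)1_T$, the triangle inequality, Cauchy--Schwarz, and the AM--GM step $\sqrt{(1-s)(1-t)}\leq 1-\sqrt{st}$, with the equality analysis tracking exactly which of these must be tight.
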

\begin{theorem}\label{thm:cross}
For any $t \in \mathbb{N}$, if $\mathcal{F,G} \subseteq \mathcal{M}_{2n}$ is $t$-cross-intersecting, then
\[|\mathcal{F}|\cdot |\mathcal{G}|  \leq ((2(n-t) - 1)!!)^2\]
for sufficiently large $n$ depending on $t$. 
\end{theorem}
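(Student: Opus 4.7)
The plan is to apply the cross-ratio bound to the same pseudo-adjacency matrix $\widetilde{A}(\Gamma_t)$ that was built in Theorems~\ref{thm:main}, \ref{thm:fattest}, and \ref{thm:nonfat}. Since $\mathcal{F}$ and $\mathcal{G}$ are $t$-cross-intersecting, any $m \in \mathcal{F}$ and $m' \in \mathcal{G}$ satisfy $|m \cap m'| \geq t$, so by the very definition of the perfect matching $t$-derangement graph $\Gamma_t$ there are no edges of $\Gamma_t$ between $\mathcal{F}$ and $\mathcal{G}$. Thus the hypothesis of the cross-ratio bound is met with $V = \mathcal{M}_{2n}$, $S = \mathcal{F}$, $T = \mathcal{G}$, and the pseudo-adjacency matrix $\widetilde{A}(\Gamma_t)$.

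Next I would read off the spectral data. From Theorems~\ref{thm:main} and~\ref{thm:fattest} we have $\eta_1 = 1$ attained on the trivial representation $(2n)$ and $\eta_\lambda = \zeta$ for every non-trivial fat even partition. From Theorem~\ref{thm:nonfat}, the non-fat eigenvalues satisfy $|\eta_\rho| = o(|\zeta|)$, so for all sufficiently large $n$ the second-largest-in-magnitude eigenvalue is exactly $|\eta_2| = |\zeta|$. Plugging into the cross-ratio bound gives
\begin{align*}
\sqrt{\frac{|\mathcal{F}|\cdot|\mathcal{G}|}{((2n-1)!!)^2}} \;\leq\; \frac{|\zeta|}{1 + |\zeta|}.
\end{align*}

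The final step is bookkeeping. Since $\zeta = -1/((\!(2n-1)\!)_t - 1)$ and $(\!(2n-1)\!)_t = (2n-1)!!/(2(n-t)-1)!!$, one computes
\begin{align*}
\frac{|\zeta|}{1+|\zeta|} \;=\; \frac{1}{(\!(2n-1)\!)_t} \;=\; \frac{(2(n-t)-1)!!}{(2n-1)!!},
\end{align*}
so squaring yields $|\mathcal{F}|\cdot|\mathcal{G}| \leq ((2(n-t)-1)!!)^2$ exactly as in the upper bound of Theorem~\ref{thm:tekr}.

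There is essentially no obstacle beyond checking that the hypotheses of the cross-ratio bound apply and that $|\zeta|$ really is the second-largest eigenvalue \emph{in magnitude} (not just the algebraic minimum). The former is immediate from the definition of $\Gamma_t$, and the latter is guaranteed by Theorem~\ref{thm:nonfat}, which bounds $|\eta_\rho|$ strictly below $|\zeta|$ for all non-fat even partitions once $n$ is large enough. All other fat eigenvalues equal $\zeta$ (tied with $|\eta_2|$), which is consistent with the cross-ratio bound since it compares to the maximum of $|\eta_i|$ over $i \geq 2$. The entire argument therefore reduces to invoking existing machinery plus a short arithmetic identity.
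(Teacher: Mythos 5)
Your proposal is correct and follows essentially the same route as the paper: apply the cross-ratio bound to $\widetilde{A}(\Gamma_t)$ from Theorems~\ref{thm:main} and~\ref{thm:fattest}, note that Theorem~\ref{thm:nonfat} makes $|\zeta|$ the second-largest eigenvalue in absolute value for large $n$, and read off the bound. The extra arithmetic you include (verifying $|\zeta|/(1+|\zeta|) = (2(n-t)-1)!!/(2n-1)!!$) is left implicit in the paper but is correct.
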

\begin{proof}
\noindent By Theorem~\ref{thm:main} and Theorem~\ref{thm:fattest}, there exists a pseudo-adjacency matrix of the perfect matching $t$-derangement graph $\widetilde{A}(\Gamma_t)$ with eigenvalues satisfying $\eta_1 = 1$ and $\eta_\lambda = \zeta$ for each non-trivial fat even partition $\lambda \vdash 2n$. By Theorem~\ref{thm:nonfat}, for each non-fat even partition $\rho \vdash 2n$, we have $|\eta_\rho| < |\zeta|$ for $n$ sufficiently large, thus $\zeta$ is the second-largest eigenvalue in absolute value for sufficiently large $n$. By our choice of $\zeta$ and the cross-ratio bound, we have that 
$$|\mathcal{F}|\cdot |\mathcal{G}| \leq ((2(n - t)-1)!!)^2,$$
for any $t$-cross-intersecting $\mathcal{F},\mathcal{G} \subseteq \mathcal{M}_{2n}$.
\end{proof}
\medskip
\noindent To finish the proof of our main result, we must show that the largest $t$-intersecting families are precisely the canonically $t$-intersecting families for sufficiently large $n$. We shall do this, albeit a bit indirectly, by proving a stability theorem for $t$-intersecting families of $\mathcal{M}_{2n}$.

\section{Stability Preliminaries and a Proof Sketch}\label{sec:sketch2}
Our next result is the following stability theorem for $t$-intersecting families of $\mathcal{M}_{2n}$.
\begin{theorem}\label{thm:stability}
For any $\epsilon \in (0,1/\sqrt{e})$, $n > n(\epsilon)$, any $t$-intersecting family of $\mathcal{M}_{2n}$ larger than $(1 - 1/\sqrt{e} + \epsilon)(2(n - t)-1)!!$ is contained in a canonically $t$-intersecting family.
\end{theorem}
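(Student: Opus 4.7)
The plan is to adapt the stability argument of Ellis, Friedgut, and Pilpel for $t$-intersecting families of permutations to the non-bipartite setting, substituting the spherical Fourier analysis on $(S_{2n}, H_n)$ developed above for ordinary Fourier analysis on $S_n$. Let $\mathcal{F} \subseteq \mathcal{M}_{2n}$ be a $t$-intersecting family with $|\mathcal{F}| > (1 - 1/\sqrt{e} + \epsilon)(2(n-t)-1)!!$ and set $f := 1_{\mathcal{F}}$.

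The first step is to show that $f$ is Fourier-concentrated on $U_t$. Expanding $f = \sum_{\lambda \vdash n} f_\lambda$ into its orthogonal projections onto the irreducibles $2\lambda$ of $\mathbb{R}[\mathcal{M}_{2n}]$ and plugging into $0 = f^\top \widetilde{A}(\Gamma_t) f$, one invokes the eigenvalues $\eta_{(n)} = 1$, $\eta_\lambda = \zeta$ for non-trivial fat $\lambda$ (Theorem~\ref{thm:fattest}), and $|\eta_\rho| = o(|\zeta|)$ for non-fat $\rho$ (Theorem~\ref{thm:nonfat}) to derive a bound of the form $\sum_{\rho \text{ non-fat}} \|f_\rho\|^2 = o(\|f\|^2)$. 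In particular $\|f - P_{U_t}f\|_2/\|f\|_2 \to 0$, so $f$ is well-approximated in $\ell^2$ by its projection onto the span of fat even irreducibles.

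The second step is a pseudo-junta style approximation: any near-Boolean function whose $U_t^\perp$-component is small must be close in $\ell^2$ to a non-negative combination $\sum_{T} c_T 1_{\mathcal{F}_T}$ indexed by partial $t$-matchings, with most $c_T$ near $\{0,1\}$. To set this up, one first argues that $U_t = \mathrm{Span}\{ 1_{\mathcal{F}_T} : T \text{ a }t\text{-matching}\}$: the inclusion $\geq$ is Theorem~\ref{thm:supp}, and the reverse follows from a dimension count against Theorem~\ref{thm:decomp} by exhibiting enough linearly independent $1_{\mathcal{F}_T}$'s in each fat irreducible, generalizing the construction of Lemma~\ref{lem:fatnonzero}. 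A quantitative bound on the Gram matrix of $\{1_{\mathcal{F}_T}\}$, extracted from the zonal character table via Theorem~\ref{thm:minors} and Corollary~\ref{cor:bound}, then yields the desired approximation.

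The final step promotes approximate to exact containment using the asymptotic derangement count~(\ref{eq:derange}). Any $m \in \mathcal{F}$ lying outside the $\mathcal{F}_T$ identified in step two must still $t$-intersect every $m' \in \mathcal{F}$; but a short inclusion-exclusion over $t$-matchings combined with~(\ref{eq:derange}) shows that the number of matchings $t$-intersecting any fixed matching is at most $(1 - 1/\sqrt{e} + o(1))(2(n-t)-1)!!$. For $\epsilon > 0$ and $n$ large this contradicts $|\mathcal{F}| > (1-1/\sqrt{e}+\epsilon)(2(n-t)-1)!!$ unless $\mathcal{F} \subseteq \mathcal{F}_T$. The main obstacle will be the pseudo-junta step: absent the group structure of $S_n$ exploited in the bipartite case, controlling the condition number of $\{1_{\mathcal{F}_T}\}$ as a spanning set of $U_t$ requires careful use of zonal symmetric function theory, and tracking these estimates through the approximation argument without losing factors that would destroy the $1 - 1/\sqrt{e}$ threshold is the delicate part.
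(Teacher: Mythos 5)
Your high-level arc (Fourier concentration on $U_t$, structure extraction, then promotion to exact containment) is the right shape, and Step~1 matches the paper's use of the stability version of the ratio bound. But your Step~2 takes a genuinely different and considerably harder route than the one the paper pursues, and Step~3 as written contains a quantitative error.

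The paper does not go through a pseudo-junta/Boolean-structure theorem. Instead, after showing $f$ is $\ell^2$-close to $U_t$, it proves the Key Lemma (Lemma~\ref{lem:key}) by (i) invoking the isoperimetric inequality for the perfect matching transposition graph $\Upsilon$ (Proposition~\ref{prop:iso}) to locate $m_1 \in \mathcal{F}$ and $m_0 \notin \mathcal{F}$ whose projections $P_{m_1}, P_{m_0}$ onto $U_t$ are near $1$ and $0$ respectively while $m_0, m_1$ differ by only $O(\sqrt{n \log n})$ partner swaps; (ii) unwinding $P_{m_1} - P_{m_0}$ through the explicit polytabloid/zonal formula (Equations~(\ref{eq:P})--(\ref{eq:orb}) and Lemma~\ref{lem:fatproj}) and averaging to produce a single tabloid $\{S\}$ from which a partial $t$-matching with a large restriction $\mathcal{F}\!\downarrow_{T^*}$ is extracted; and (iii) bootstrapping with the $t$-cross-intersecting bounds (Lemmas~\ref{lem:smallcross}, \ref{lem:1cross}) and an induction on $t$. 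Your route would instead require proving $U_t = \mathrm{Span}\{1_{\mathcal{F}_T}\}$ (the paper proves only the inclusion $\supseteq$ in Theorem~\ref{thm:supp}, and explicitly remarks in the proof sketch that the reverse inclusion is what it works around via Lemma~\ref{lem:fatnonzero} and Theorem~\ref{thm:fattest}), and then controlling the condition number of the spanning set $\{1_{\mathcal{F}_T}\}$ tightly enough to push the approximation error below the $1-1/\sqrt{e}$ threshold. You flag this obstacle yourself, and you are right to: no such spanning or Gram-matrix estimate appears in the paper, and establishing it from zonal symmetric function theory is a substantive additional project.

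Your Step~3 also misstates the relevant count. The number of perfect matchings that $t$-intersect a fixed matching is not $(1-1/\sqrt{e}+o(1))(2(n-t)-1)!!$ --- it is the complement of the $t$-derangement count, which is of order $(2n-1)!!$. The bound the paper actually uses is on the number of members of $\mathcal{F}\!\downarrow_T$ (i.e., matchings containing $T$) that can $t$-intersect a fixed $m$ with $T \not\subseteq m$: taking $s = |m \cap T| = t-1$, no member of $\mathcal{F}\!\downarrow_T$ whose restriction to $K_{2n}\setminus V(T)$ deranges $m$ (or contains $\{m(i),m(j)\}$ and deranges $m$ on what remains) can $t$-intersect $m$, giving $|\mathcal{F}\!\downarrow_T| \leq (2(n-t)-1)!! - D_2(n-t,1) - D_2(n-t-1,1) = (1-1/\sqrt{e}-o(1))(2(n-t)-1)!!$, which then clashes with the Key Lemma's lower bound on $|\mathcal{F}\!\downarrow_T|$. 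This derangement-inside-$\mathcal{F}_T$ argument is the mechanism that produces the $1-1/\sqrt{e}$ constant; the version you wrote does not.
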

\noindent It is clear that this theorem implies the characterization of the extremal families stated in Theorem~\ref{thm:tekr}. In a recent note~\cite{Lindzey18}, the $t=1$ case of Theorem~\ref{thm:stability} was proven.  The reader may find it useful to first peruse the proof of~\cite[Theorem 2]{Lindzey18} since the $t \geq 2$ case is along the same lines as the $t=1$ case, only the latter is much simpler.

The lion's share of the remainder will be spent towards proving a key lemma, which roughly asserts that almost all of the members of a large $t$-intersecting family have a set of $t$ disjoint edges in common.  For any set of disjoint edges $S \subseteq E(K_{2n})$, we define the set $\mathcal{F} \! \downarrow_S := \{ m \in \mathcal{F} : S \subseteq m\}$ to be the \emph{restriction} of $\mathcal{F} \subseteq \mathcal{M}_{2n}$ with respect to $S$.

\begin{lemma}[Key Lemma]\label{lem:key}
Let $c \in (0,1)$. For any $t \in \mathbb{N}$, if $\mathcal{F} \subseteq \mathcal{M}_{2n}$ is a $t$-intersecting family such that $|\mathcal{F}| \geq c(2(n-t)-1)!!$, then there is a set of $t$ disjoint edges $T \subseteq E(K_{2n})$ such that 
$$|\mathcal{F} \setminus  \mathcal{F}\!\downarrow_{T} | = O((2(n-t-1)-1)!!)$$
for sufficiently large $n$ depending on $c$ and $t$.
\end{lemma}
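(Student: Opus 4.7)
The plan is to combine a spectral $L^2$-concentration of $f := 1_{\mathcal{F}}$ onto the fat Fourier subspace $U_t$ with a combinatorial extraction via the cross-ratio bound (Theorem~\ref{thm:cross}), then iterate to close a factor-of-$n$ gap.

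\emph{Spectral concentration.} Since $\mathcal{F}$ is an independent set of $\Gamma_t$, we have $f^\top \widetilde{A}(\Gamma_t) f = 0$ for the pseudo-adjacency matrix of Theorem~\ref{thm:main}. Expanding in the eigenbasis and using $\eta_1 = 1$, $\eta_\lambda = \zeta$ on every non-trivial fat eigenspace (Theorems~\ref{thm:main} and~\ref{thm:fattest}), and $|\eta_\rho| = o(|\zeta|)$ on every non-fat eigenspace (Theorem~\ref{thm:nonfat}), the same manipulation as in~\eqref{eq:ratio} should yield
\[ W_{\mathrm{nonfat}} \;:=\; \|f - P_{U_t} f\|^2 \;\leq\; (1-c)\,|\mathcal{F}|\,(1 + o(1)), \]
so the projection $g := P_{U_t} f$ captures most of the $L^2$-mass of $f$.

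\emph{Identifying $T^{\star}$.} I would first upgrade Theorem~\ref{thm:supp} to the equality $U_t = \mathrm{Span}\{1_{\mathcal{F}_T}\}$, which follows from the transitivity of the $S_{2n}$-action on canonical $t$-intersecting families together with Lemma~\ref{lem:fatnonzero}: the orbit of one $1_{\mathcal{F}_S}$ has nonzero projection onto the smallest fat isotypic $2(n-t,1^t)$, and the analogous statement holds for every larger fat isotypic component. With this equality in hand, expand $g = \sum_T c_T 1_{\mathcal{F}_T}$ and pair against the Boolean function $f$: an averaging/pigeonhole argument should select a distinguished $T^{\star}$ with $|\mathcal{F}\!\downarrow_{T^{\star}}| = \Omega_{c,t}((2(n-t)-1)!!)$. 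The residual $\mathcal{R} := \mathcal{F} \setminus \mathcal{F}\!\downarrow_{T^{\star}}$ is then $t$-cross-intersecting with $\mathcal{F}\!\downarrow_{T^{\star}}$, so Theorem~\ref{thm:cross} gives $|\mathcal{R}| = O_{c,t}((2(n-t)-1)!!)$.

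\emph{Closing the gap and main obstacle.} The inequality $|\mathcal{R}| = O((2(n-t)-1)!!)$ is short of the target $O((2(n-t-1)-1)!!)$ by a factor of $2(n-t)-1$, and closing this gap is the main obstacle. I plan to iterate: for each edge $e$ lying in a positive fraction of the matchings in $\mathcal{F}\!\downarrow_{T^{\star}}$, the refined canonical family $\mathcal{F}\!\downarrow_{T^{\star} \cup \{e\}}$ is still linearly large, so any $m' \in \mathcal{R}$ containing $e$ is effectively $(t+1)$-cross-intersecting against a large family, which drops the bound by a factor of $n$ via a second application of the cross-ratio bound; aggregating over a covering set of such $e$'s recovers the desired estimate. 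An alternative route is an inductive descent: contract an edge of $T^{\star}$, reducing the problem to a $(t-1)$-intersecting family on $\mathcal{M}_{2(n-1)}$, and ultimately to the $t=1$ Key Lemma of~\cite{Lindzey18}. Both routes demand delicate tracking of constants through the contraction or nesting, but no single step is conceptually unusual.
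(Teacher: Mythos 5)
The proposal captures the broad architecture of the paper's argument (concentrate $f$ on $U_t$ via the stability ratio bound, extract a set $T^{\star}$, then bootstrap with a cross-intersecting estimate and an induction on $t$), and the spectral-concentration opening is correct. However, the two hardest steps are either hand-waved or replaced by arguments that do not work.

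\textbf{Extraction of $T^{\star}$.} Your proposed expansion $g = P_{U_t} f = \sum_T c_T 1_{\mathcal{F}_T}$ followed by ``pairing against $f$ and pigeonholing'' has a genuine gap. The family $\{1_{\mathcal{F}_T}\}$ is an \emph{overcomplete} spanning set: there are $\Theta(n^{2t})$ functions $1_{\mathcal{F}_T}$ but $\dim U_t$ is far smaller, so the coefficients $c_T$ are non-unique and uncontrolled in size and sign. Pairing gives $\sum_T c_T |\mathcal{F}\!\downarrow_T| \approx |\mathcal{F}|$, but without a bound on $\max_T |c_T|$ no pigeonhole is available; and even if one could force $\max_T |c_T| = O(1)$, pigeonhole over $\Theta(n^{2t})$ terms would only yield $|\mathcal{F}\!\downarrow_{T^{\star}}| = \Omega\bigl((2(n-t)-1)!!/n^{2t}\bigr)$, which is far too weak. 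The paper instead does something completely different at this step: it uses the explicit projection formula (Lemma~\ref{lem:fatproj}), partitions $\mathcal{M}_{2n}$ into a set $\mathcal{F}_1 \subseteq \mathcal{F}$ of matchings with $P_m$ close to $1$ and a set $\mathcal{F}_0$ of non-members with $P_m$ close to $0$, invokes the isoperimetric inequality for the transposition graph $\Upsilon$ (Proposition~\ref{prop:iso}) to find $m_1 \in \mathcal{F}_1$, $m_0 \in \mathcal{F}_0$ differing by $O(\sqrt{n\log n})$ partner swaps, and then expresses $P_{m_1}-P_{m_0}$ combinatorially via the spherical-function formulas~(\ref{eq:P})--(\ref{eq:orb}) and a cascade of averagings over $C_T'$ and $H_n$ to isolate a single tabloid $\{S\}$ covering many members of $\mathcal{F}$. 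Even then the paper only achieves $|\mathcal{F}\!\downarrow_{T^{\star}}| \geq \omega\bigl((2(n-2t)-1)!!\bigr)$ — much weaker than the $\Omega\bigl((2(n-t)-1)!!\bigr)$ you assert — and this weaker bound turns out to suffice for the bootstrap.

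\textbf{Cross-intersecting bootstrap.} Applying Theorem~\ref{thm:cross} directly to $\mathcal{F}\!\downarrow_{T^{\star}}$ and $\mathcal{R} = \mathcal{F}\setminus\mathcal{F}\!\downarrow_{T^{\star}}$ inside $\mathcal{M}_{2n}$ produces the trivial bound $|\mathcal{R}| = O\bigl((2(n-t)-1)!!\bigr)$ which, as you observe, is a full factor of $2(n-t)-1$ short. The essential device that buys this factor is missing from your sketch: both Lemma~\ref{lem:smallcross} and Lemma~\ref{lem:1cross} compare restrictions $\mathcal{F}\!\downarrow_{T^{\star}}$, $\mathcal{F}\!\downarrow_{T}$ that are forced to share certain vertices, delete those vertices (after a careful relabeling), and apply the cross-ratio bound inside the \emph{smaller} ground set $\mathcal{M}_{2(n-t)}$ or $\mathcal{M}_{2(n-1)}$. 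This shrinking of the ground set is exactly what converts a trivial bound into $O\bigl((2(n-t-1)-1)!!\bigr)$. Your ``route 1'' ($\mathcal{F}\!\downarrow_{T^{\star}\cup\{e\}}$ and a ``$(t+1)$-cross-intersecting'' step) is vague and does not hint at this mechanism; your ``route 2'' (inductive descent by contracting an edge, as in~\cite{Lindzey18}) is the right idea and matches the paper's final induction, but that induction can only start once $|\mathcal{F}\setminus\mathcal{F}\!\downarrow_{ij}| = O\bigl((2(n-t-1)-1)!!\bigr)$ has already been established for a single edge $ij$, which is precisely what Lemma~\ref{lem:1cross} supplies. Without it, removing $i,j$ and inducting loses control of the discarded part of $\mathcal{F}$.

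In short, the architecture is recognized correctly, but the proposal replaces the combinatorial core of the paper's proof (isoperimetry on $\Upsilon$ plus the polytabloid/spherical-function formulas) with a linear-algebraic pigeonhole that does not close, and omits the ground-set-shrinking cross-intersecting lemmas that are indispensable for the final factor of $n$.
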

\noindent For any set of disjoint edges $m$ and vertex $u \in V(m)$, let $m(u)$ be the other endpoint of the edge incident to $u$ in $m$, that is, the \emph{partner} of $u$. We now show that the key lemma implies the stability theorem.	
\begin{proof}[Proof of Theorem~\ref{thm:stability}]
Let $\mathcal{F}$ be a $t$-intersecting family of size $c(2(n-t)-1)!!$ such that $c \in (1 - 1/\sqrt{e}, 1)$. Assuming the key lemma, there is a set of $t$ disjoint edges $T \subseteq E(K_{2n})$ such that $|\mathcal{F} \setminus \mathcal{F}\!\downarrow_{T} | = O((2(n-t-1)-1)!!)$, which implies that
\begin{align}\label{eq:foo}
|\mathcal{F} \!\downarrow_{T} | \geq (c - O(1/n))(2(n-t)-1)!!.
\end{align}
For a contradiction, suppose there exists a perfect matching $m \in \mathcal{F}$ such that $T \not\subseteq m$, and let $s = |m \cap T|$. Since $\mathcal{F}$ is $t$-intersecting, any member of $\mathcal{F} \!\downarrow_T$ shares $t$ edges with $m$, and therefore no member of $\mathcal{F} \!\downarrow_T$ can be a $(t-s)$-derangement with respect to $m$ when restricted to the complete subgraph $K_{2n} \setminus V(T)$. This implies that
\[  |\mathcal{F} \!\downarrow_{T}| \leq (2(n-t)-1)!! - D_2(n-t,t-s) \leq (2(n-t)-1)!! - D_2(n-t,1). \]
Assume $t-s=1$, and let $ij$ be the edge of $T$ that is not contained in $m$. Then $m(i),m(j) \notin V(T)$, and every member of $\mathcal{F} \!\downarrow_T$ that contains $\{m(i),m(j)\}$ that is also a derangement with respect to $m$ when restricted to $K_{2n} \setminus V(T)$ cannot $t$-intersect $m$. This gives us
\begin{align*}
|\mathcal{F} \!\downarrow_{T}| &\leq (2(n-t)-1)!! - D_2(n-t,t-s)\\
&\leq (2(n-t)-1)!! - D_2(n-t,1) - D_2(n-t-1,1)\\
&= (1 - 1/\sqrt{e} - o(1))(2(n-t)-1)!!,  
\end{align*}
where the equality follows from Equation (\ref{eq:derange}). But this contradicts (\ref{eq:foo}) for $n$ sufficiently large depending on $c$ and $t$.
\end{proof}

\noindent By the argument above, it suffices to prove the key lemma. The core of its proof is a generalization of the ratio bound.  Recall that $U_t$ is space of real-valued functions on perfect matchings that are supported on the fat even irreducibles as defined in Section~\ref{sec:low}.

\begin{theorem}[Stability Version of Ratio Bound~\cite{Ellis11}]\label{thm:stableratio}
Let $\widetilde{A}(\Gamma)$ be the pseudo-adjacency matrix of a regular graph $\Gamma = (V,E)$ with eigenvalues $\eta_{\min} \leq \cdots \leq \eta_2 \leq   \eta_1$ and corresponding orthonormal eigenvectors $v_{\min}, \cdots, v_2,v_{1}$. Let $\mu = \min_i \{ \eta_i : \eta_i \neq \eta_{\min} \}$.
Let $S \subseteq V$ be an independent set of vertices of measure $\alpha = |S|/|V|$. 
Let $D = \| P_{U_t^\perp}(1_S) \|$ be the Euclidean distance from the characteristic function of $S$ to the subspace $U_t$. Then
\[ D^2 \leq \alpha \frac{(1-\alpha)|\eta_{\min}| - |\eta_{1}|\alpha}{|\eta_{\min}| - |\mu|}.\]
\end{theorem}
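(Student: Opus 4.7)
The plan mirrors the Delsarte--Hoffman derivation (cf.\ equation (\ref{eq:ratio})), with the twist of tracking the non-extremal Fourier component $D^2$ rather than absorbing it. I would first expand $f := 1_S = \sum_i a_i v_i$ in the orthonormal eigenbasis of $\widetilde{A}(\Gamma)$, using the probability-measure inner product so that $v_1 = 1_V$. Then $a_1 = \alpha$ and $\|f\|^2 = \alpha$, and setting $M := \sum_{\eta_i = \eta_{\min}} a_i^2$, Parseval's identity gives $M = \alpha(1-\alpha) - D^2$---provided one knows $U_t = \text{Span}(v_1) \oplus \text{Span}\{v_i : \eta_i = \eta_{\min}\}$, which holds for the $\widetilde{A}(\Gamma_t)$ constructed in Section~\ref{sec:pseudo} by Theorems~\ref{thm:main}, \ref{thm:nonfat}, and \ref{thm:fattest}.

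Next, independence of $S$ together with the hypothesis that $\widetilde{A}(\Gamma)$ is supported on edges forces $\langle f, \widetilde{A}(\Gamma) f\rangle = 0$. Expanding in the eigenbasis yields
\[
0 \;=\; \eta_1 \alpha^2 \;+\; \eta_{\min} M \;+\; \sum_{\substack{\eta_i \neq \eta_{\min} \\ i \neq 1}} \eta_i a_i^2,
\]
and the final sum is at least $\mu D^2$ since, by definition of $\mu$, every eigenvalue appearing there is $\geq \mu$. Substituting $M = \alpha(1-\alpha) - D^2$ and rearranging gives $(\mu - \eta_{\min}) D^2 \leq -\eta_1 \alpha^2 - \eta_{\min}\alpha(1-\alpha)$, from which the target inequality follows by dividing through by $\mu - \eta_{\min} > 0$.

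The only real subtlety---and the step I expect to require the most bookkeeping---is reconciling the sign conventions in the statement: the computation naturally produces $\mu - \eta_{\min}$ in the denominator, which coincides with $|\eta_{\min}| - |\mu|$ precisely when $\mu \leq 0$ and $|\mu| < |\eta_{\min}|$. Both conditions hold in the intended application, since Theorem~\ref{thm:nonfat} guarantees $|\eta_\rho| = o(|\zeta|)$ on non-fat eigenspaces for $n$ sufficiently large, so the denominator is automatically positive. The other implicit ingredient---that $U_t$ coincides with the span of the trivial and $\eta_{\min}$-eigenvectors---was already established during the construction of $\widetilde{A}(\Gamma_t)$, so it does not present a genuine obstacle here.
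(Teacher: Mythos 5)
Your derivation is correct and is the expected one. The paper cites this theorem from Ellis~\cite{Ellis11} without reproducing a proof, so there is no in-paper argument to compare against; what you give is precisely the natural generalization of the ratio-bound computation at equation~(\ref{eq:ratio}), isolating the Fourier weight $D^2$ outside the trivial and $\eta_{\min}$-eigenspaces rather than discarding it. The steps all check: Parseval gives $\alpha^2 + M + D^2 = \alpha$ once $U_t$ is identified with $\mathrm{Span}(v_1) \oplus \mathrm{Span}\{v_i : \eta_i = \eta_{\min}\}$; independence of $S$ forces $0 = \eta_1\alpha^2 + \eta_{\min}M + \sum_{\eta_i \neq \eta_{\min},\, i\neq 1}\eta_i a_i^2$; lower-bounding the last sum by $\mu D^2$ and substituting $M = \alpha(1-\alpha)-D^2$ yields $(\mu - \eta_{\min})D^2 \leq -\eta_1\alpha^2 - \eta_{\min}\alpha(1-\alpha)$. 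Your reconciliation of $\mu - \eta_{\min}$ with the statement's $|\eta_{\min}| - |\mu|$ under the sign conditions $\eta_{\min} < \mu \leq 0 < \eta_1$ is the right bookkeeping, and those conditions are indeed guaranteed for $\widetilde{A}(\Gamma_t)$ by the construction in Section~\ref{sec:pseudo}. The one small sharpening worth noting is that you only need the containment $\mathrm{Span}(v_1) \oplus \mathrm{Span}\{v_i : \eta_i = \eta_{\min}\} \subseteq U_t$, since enlarging $U_t$ can only shrink $D$; equality happens to hold for $\widetilde{A}(\Gamma_t)$ as you observe, so your stronger hypothesis is also fine.
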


\noindent For any subset of vertices $V' \subseteq V(K_{2n})$, let $\Delta(V') \subseteq E(K_{2n})$ be the set of edges that have at least one endpoint in $V'$. The next lemmas follow from Theorem~\ref{thm:cross} and will be needed for a couple of combinatorial arguments.  Let $T^* := \{\{1,2\},\{3,4\},\cdots,\{2t-1,2t\}\}$.

\begin{lemma}\label{lem:smallcross}
Let $\mathcal{F} \subseteq \mathcal{M}_{2n}$ be a $t$-intersecting family.  For any set of $t$ disjoint edges $T \subseteq E(K_{2n})$ such that $T \cap T^* = \emptyset$, $|V(T) \cap \{2i-1,2i\}| \geq 1$ for all $i \in [t]$, and $T \subseteq \Delta([2t])$, we have
\[ |\mathcal{F} \!\downarrow_{T^*}| \cdot | \mathcal{F} \!\downarrow_{T}| \leq ((2(n-2t)-1)!!)^2.\]
\end{lemma}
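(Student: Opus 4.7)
My strategy is to reduce the problem to the cross-ratio bound (Theorem~\ref{thm:cross}) applied in the smaller matching space $\mathcal{M}_{2(n-t)}$.  Let $U := [2t] \cup V(T)$; the hypothesis $T \subseteq \Delta([2t])$ combined with $|V(T) \cap \{2i-1,2i\}| \geq 1$ for each $i$ gives $|V(T) \cap [2t]| = k$ for some $t \leq k \leq 2t$, so the ``leftover'' sets $[2t] \setminus V(T)$ and $V(T) \setminus [2t]$ both have size $2t - k$.  I will fix any bijection $\phi : [2n] \setminus V(T) \to [2n] \setminus [2t]$ that is the identity on $[2n] \setminus U$ and matches $[2t] \setminus V(T)$ with $V(T) \setminus [2t]$ arbitrarily.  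Then set
\[\mathcal{F}_1 := \{m \setminus T^* : m \in \mathcal{F}\!\downarrow_{T^*}\} \quad \text{and} \quad \mathcal{F}_2 := \{\phi(m' \setminus T) : m' \in \mathcal{F}\!\downarrow_T\},\]
both of which are subsets of $\mathcal{M}_{2(n-t)}$ on the common ground set $[2n] \setminus [2t]$; the defining maps are injective.

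The key step is to show that $\mathcal{F}_1$ and $\mathcal{F}_2$ form a $t$-cross-intersecting pair.  I first observe that $T \cap m = \emptyset$ for every $m \in \mathcal{F}\!\downarrow_{T^*}$ (each edge of $T$ has an endpoint in $[2t]$, which is already matched by $T^*$ in $m$, and $T \cap T^* = \emptyset$) and, dually, that $T^* \cap m' = \emptyset$ for every $m' \in \mathcal{F}\!\downarrow_T$ (the hypothesis $|V(T) \cap \{2i-1,2i\}| \geq 1$ forces each edge of $T^*$ to have an endpoint already matched by $T$).  From these two facts, any common edge $e \in m \cap m'$ cannot touch $[2t]$ (else $e \in T^*$, contradicting $T^* \cap m' = \emptyset$) nor touch $V(T)$ (else $e \in T$, contradicting $T \cap m = \emptyset$), so $e \subseteq [2n] \setminus U$.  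Combined with $|m \cap m'| \geq t$, this localizes at least $t$ shared edges inside $[2n] \setminus U$, and since $\phi$ acts as the identity there, these same $t$ edges survive in both $m \setminus T^*$ and $\phi(m' \setminus T)$, yielding the required $t$-cross-intersection.

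Finally, applying Theorem~\ref{thm:cross} to $\mathcal{F}_1, \mathcal{F}_2 \subseteq \mathcal{M}_{2(n-t)}$ (valid for $n - t$ sufficiently large depending on $t$) will give
\[|\mathcal{F}\!\downarrow_{T^*}| \cdot |\mathcal{F}\!\downarrow_T| = |\mathcal{F}_1| \cdot |\mathcal{F}_2| \leq \bigl((2((n-t)-t)-1)!!\bigr)^2 = \bigl((2(n-2t)-1)!!\bigr)^2,\]
as desired.

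The main obstacle I anticipate is the localization step: pinning down that every common edge of $m \in \mathcal{F}\!\downarrow_{T^*}$ and $m' \in \mathcal{F}\!\downarrow_T$ must avoid the vertex set $U$.  This is the place where all three structural hypotheses on $T$ (disjointness from $T^*$, hitting each pair $\{2i-1,2i\}$, and being contained in $\Delta([2t])$) are used essentially; without any one of them the deduction $m \cap m' \subseteq [2n] \setminus U$ breaks.  Once the localization is established, the choice of $\phi$ to be the identity on $[2n] \setminus U$ transports the cross-intersection onto a single ambient space, and the cross-ratio bound finishes the proof in one line.
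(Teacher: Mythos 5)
Your proposal is correct and takes essentially the same approach as the paper: both arguments relabel the vertices (the paper via an involution $\pi$ of $[2n]$ swapping $V(T)\setminus[2t]$ with $[2t]\setminus V(T)$ and then deleting $[2t]$; you via a bijection $\phi$ after deleting the anchor sets, which is the same reduction in different bookkeeping) so that $\mathcal{F}\!\downarrow_{T^*}$ and $\mathcal{F}\!\downarrow_T$ become $t$-cross-intersecting families in $\mathcal{M}_{2(n-t)}$, and then apply Theorem~\ref{thm:cross}. Your explicit localization step showing that every common edge must lie in $K_{[2n]\setminus U}$ is in fact a slightly more careful rendering of the claim the paper makes without elaboration.
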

\begin{proof}
Note that because $\mathcal{F}$ is a $t$-intersecting family, we have that $\mathcal{F} \!\downarrow_{T^*}$ and $\mathcal{F} \!\downarrow_{T}$ are cross-$t$-intersecting on edges of $K_{2n} \setminus \{ [2t] \cup V(T)\}$.  
By our choice of $T$, we have 
$$ V(T) \setminus [2t] = \{u_1,u_2,\cdots,u_k\} \quad \text{ and } \quad   [2t] \setminus V(T) = \{v_1,v_2,\cdots,v_k\}$$ 
for some $k \leq t$.  Define the involution $\pi := (u_1~v_1) (u_2~v_2) \cdots (u_k~v_k)$. For any two perfect matchings $m \in \mathcal{F} \!\downarrow_{T^*}$ and $m' \in \pi \left( \mathcal{F} \!\downarrow_{T} \right)$, every edge of $m$ and $m'$ has either both of its endpoints in $[2t]$ or none of its endpoints in $[2t]$. Since $\pi$ fixes every $v \notin [2t] \cup V(T)$, we also have that $m,m'$ are cross-$t$-intersecting on edges of $K_{2n} \setminus \{ [2t] \cup V(T)\}$.  By deleting $[2t]$ we obtain $\left(\mathcal{F} \!\downarrow_{T^*}\right)'$ and $\left(\pi \left( \mathcal{F} \!\downarrow_{T} \right)\right)'$ which are $t$-cross-intersecting families of $\mathcal{M}_{2(n-t)}$. By Theorem~\ref{thm:cross}, we deduce that
\begin{align*}
|\mathcal{F} \!\downarrow_{T^*}| \cdot | \mathcal{F} \!\downarrow_{T} | = |\mathcal{F} \!\downarrow_{T^*}| \cdot | \pi \left( \mathcal{F} \!\downarrow_{T} \right)| &= |\left(\mathcal{F} \!\downarrow_{T^*}\right)'| \cdot |\left(\pi \left( \mathcal{F} \!\downarrow_{T} \right)\right)'|\\
 &\leq  ((2(n-2t)-1)!!)^2,
\end{align*}
as desired.
\end{proof}
\noindent A similar argument can be used to show the following.
\begin{lemma}\label{lem:1cross}
Let $\mathcal{F} \subseteq \mathcal{M}_{2n}$ be a $t$-intersecting family.  For any $i,j,k \in V(K_{2n})$, we have
\[ |\mathcal{F} \!\downarrow_{ij}| \cdot | \mathcal{F} \!\downarrow_{ik}| \leq ((2(n-t-1)-1)!!)^2.\]
\end{lemma}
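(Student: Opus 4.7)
The plan is to mimic the argument of Lemma~\ref{lem:smallcross}: use a suitable transposition to transport $\mathcal{F}\!\downarrow_{ik}$ onto a family whose members all contain the edge $ij$, delete that edge, and then appeal to Theorem~\ref{thm:cross} on the resulting families of perfect matchings of $K_{2(n-1)}$. Of course I may assume $i,j,k$ are pairwise distinct, since otherwise $\mathcal{F}\!\downarrow_{ij}$ (or $\mathcal{F}\!\downarrow_{ik}$) is empty or the inequality is obtained from the Main Result applied to a smaller graph.

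The key observation I would establish first is that for any $m\in\mathcal{F}\!\downarrow_{ij}$ and $m'\in\mathcal{F}\!\downarrow_{ik}$, no edge of $m\cap m'$ is incident to $\{i,j,k\}$. Indeed, $i$'s partner is $j$ in $m$ and $k$ in $m'$, so no edge at $i$ is common; in $m$ the partner of $j$ is $i$, whereas in $m'$ (where $i$ is paired with $k$) the partner of $j$ is some $j'\notin\{i,k\}$, so no edge at $j$ is common; the symmetric reasoning rules out common edges at $k$. Since $|m\cap m'|\ge t$, we therefore obtain $t$ common edges on vertices of $V(K_{2n})\setminus\{i,j,k\}$.

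Next I would let $\pi=(j\ k)$ be the transposition swapping $j$ and $k$, and define
\[
\mathcal{F}_1 := \{\, m \setminus \{ij\} : m \in \mathcal{F}\!\downarrow_{ij} \,\}, \qquad
\mathcal{F}_2 := \{\, \pi(m') \setminus \{ij\} : m' \in \mathcal{F}\!\downarrow_{ik} \,\}.
\]
Since $\pi$ sends $ik$ to $ij$, every matching in $\pi(\mathcal{F}\!\downarrow_{ik})$ contains $ij$, so $\mathcal{F}_1$ and $\mathcal{F}_2$ are both families of perfect matchings of the complete graph on $V(K_{2n})\setminus\{i,j\}$, with $|\mathcal{F}_1|=|\mathcal{F}\!\downarrow_{ij}|$ and $|\mathcal{F}_2|=|\mathcal{F}\!\downarrow_{ik}|$. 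The $t$ common edges of $m$ and $m'$ identified above avoid $\{i,j,k\}$, hence are fixed pointwise by $\pi$ and survive the deletion of $ij$; so they belong to both $m\setminus\{ij\}$ and $\pi(m')\setminus\{ij\}$, showing that $\mathcal{F}_1$ and $\mathcal{F}_2$ form a $t$-cross-intersecting pair in $\mathcal{M}_{2(n-1)}$.

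For $n$ (and hence $n-1$) sufficiently large in terms of $t$, Theorem~\ref{thm:cross} then gives
\[
|\mathcal{F}\!\downarrow_{ij}|\cdot|\mathcal{F}\!\downarrow_{ik}| = |\mathcal{F}_1|\cdot|\mathcal{F}_2| \le \bigl((2((n-1)-t)-1)!!\bigr)^2 = \bigl((2(n-t-1)-1)!!\bigr)^2,
\]
as desired. The only delicate step is the partner-bookkeeping that shows $m\cap m'$ avoids $\{i,j,k\}$; once that is in hand, the rest is purely a matter of tracking the effect of the involution $\pi$ and quoting the cross-intersecting version of the Main Result.
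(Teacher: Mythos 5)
Your proof is correct and implements exactly the ``similar argument'' the paper alludes to but does not spell out: identify that every common edge of $m \in \mathcal{F}\!\downarrow_{ij}$ and $m' \in \mathcal{F}\!\downarrow_{ik}$ avoids $\{i,j,k\}$, apply the transposition $(j\ k)$ to translate $\mathcal{F}\!\downarrow_{ik}$ onto a family all of whose members contain $ij$, delete $\{i,j\}$ to land in $\mathcal{M}_{2(n-1)}$, and invoke Theorem~\ref{thm:cross}. This is structurally the same reduction used for Lemma~\ref{lem:smallcross}, specialized to a single edge, and the partner-bookkeeping you do to show the common edges avoid $\{i,j,k\}$ is exactly the right verification.

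One small caveat about your opening remark: the degenerate case $j = k$ does \emph{not} follow from the Main Result applied to a smaller graph. When $j = k$, the claimed inequality would read $|\mathcal{F}\!\downarrow_{ij}| \leq (2(n-t-1)-1)!!$, but deleting $\{i,j\}$ yields only a $(t-1)$-intersecting family of $\mathcal{M}_{2(n-1)}$, for which Theorem~\ref{thm:tekr} gives $(2(n-t)-1)!!$, a factor of $2(n-t)-1$ too large; indeed for $\mathcal{F}$ canonically $t$-intersecting with $ij \in T$ one has $\mathcal{F}\!\downarrow_{ij} = \mathcal{F}$, so the bound genuinely fails. The lemma statement is implicitly for $j \neq k$ (as in the paper's sole application, Equation~(\ref{eq:terms2}), where the union ranges over $k \neq j$), and your main argument correctly assumes this, so the slip is inconsequential.
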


The \emph{perfect matching transposition graph} is the graph $\Upsilon_n$ defined such that two perfect matchings $m,m' \in \mathcal{M}_{2n}$ are adjacent if they differ by a \emph{partner swap}, that is, there exists a transposition $\tau \in S_{2n}$ such that $\tau m = m'$. In other words, the adjacency matrix of $\Upsilon_n$ is $A_{(2,1^{n-2})} \in \mathcal{A}$. For any graph $\Gamma = (V,E)$, the \emph{h-neighborhood} of a set $X \subseteq V$ is the set of vertices $N_h(X) := \{ v \in V : \text{dist} (v,X) \leq h\}$ where $\text{dist} (v,X)$ is the length of a shortest path from $v$ to any vertex of $X$. 

It is instructive to think of $h$-neighborhoods in $\Upsilon$ as balls of radius $h$ in a discrete metric space, as perfect matchings in a ball of small radius around some perfect matching in $\Upsilon$ are all structurally quite similar, i.e., they share many edges. The following isoperimetric inequality plays a pivotal role in our proof of the key lemma.
\begin{proposition}\emph{\cite{Lindzey18}}\label{prop:iso}
Let $X \subset V(\Upsilon_n)$ such that $|X| \geq a(2n-1)!!$ for some $a \in (0,1)$. Then for any $h \in \mathbb{N}$ such that 
$$h > h_0 = \sqrt{\frac{n}{2} \ln (1/a) },$$
the following holds: 
\[ N_h(X) \geq \left(1 - \exp \left(\frac{-2(h-h_0)^2}{n} \right) \right)(2n-1)!!.\]
\end{proposition}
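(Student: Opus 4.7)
The plan is to apply a bounded-differences martingale concentration inequality (Azuma--Hoeffding) to a Doob martingale that exposes the edges of a uniformly random perfect matching one at a time. To set it up, I would sample $m \in \mathcal{M}_{2n}$ by the following $n$-step process: at step $i = 1,2,\ldots,n$, let $v_i$ denote the smallest vertex of $[2n]$ that has not yet been matched, and choose its partner $m(v_i)$ uniformly at random from the remaining $2(n-i)+1$ unmatched vertices. Let $\mathcal{F}_i$ be the $\sigma$-algebra generated by the first $i$ partner choices, and set $f(m) := \mathrm{dist}_{\Upsilon_n}(m, X)$. The Doob martingale $Z_i := \mathbb{E}[f(m) \mid \mathcal{F}_i]$ then satisfies $Z_0 = \mathbb{E}[f(m)]$ and $Z_n = f(m)$.

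The crucial step is a coupling argument that establishes the bounded differences condition $|Z_i - Z_{i-1}| \leq 1$. Given two candidate partners $u, v$ for $v_i$ with identical history on $\mathcal{F}_{i-1}$, I would couple the uniformly random completions as follows: sample a uniform completion $M_A$ containing $\{v_i, u\}$ and the edge $\{v,x\}$ (where $x$ is determined by the sample), and obtain the coupled completion $M_B$ by replacing $\{v_i, u\}$ and $\{v, x\}$ with $\{v_i, v\}$ and $\{u, x\}$ while leaving all other edges unchanged. A direct bijection check shows $M_B$ is uniformly distributed among completions containing $\{v_i, v\}$, and $M_B = (u\,v) M_A$ so the two matchings are adjacent in $\Upsilon_n$. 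Therefore $|f(M_A) - f(M_B)| \leq 1$, yielding $|Z_i - Z_{i-1}| \leq 1$ in the standard way.

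Applying Azuma--Hoeffding with $\sum_{i=1}^n 1^2 = n$ yields $\mathbb{P}[|f(m) - \mathbb{E}[f(m)]| \geq t] \leq 2\exp(-2t^2/n)$ for all $t > 0$. Since $\mathbb{P}[f(m) = 0] = |X|/(2n-1)!! \geq a$, the lower tail gives $a \leq \exp(-2\mathbb{E}[f(m)]^2/n)$, hence $\mathbb{E}[f(m)] \leq h_0 = \sqrt{(n/2)\ln(1/a)}$. Combined with the upper tail, for any integer $h > h_0$ we obtain
\[ \mathbb{P}[m \notin N_h(X)] = \mathbb{P}[f(m) > h] \leq \exp\!\left(-\frac{2(h-h_0)^2}{n}\right), \]
which is equivalent to the stated lower bound on $|N_h(X)|$.

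The main obstacle is verifying the bounded differences property, as it depends on the combinatorial coupling specific to perfect matchings sketched above; once it is in place, the remainder is a routine deployment of Azuma--Hoeffding together with the standard trick of controlling the mean via the lower-tail bound applied to the ``anchor'' event $\{f = 0\}$.
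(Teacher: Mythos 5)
The paper does not prove this proposition; it cites~\cite{Lindzey18} and uses it as a black box, so there is no in-paper proof to compare against. Your martingale argument is correct and, to the best of my knowledge, is essentially the approach taken in the cited reference (the analogue for permutations appears in Ellis's work, and Lindzey adapts it to matchings in the same way). The coupling is sound: conditioned on $\mathcal{F}_{i-1}$, the map $M_A \mapsto M_B$ that replaces $\{v_i,u\},\{v,x\}$ by $\{v_i,v\},\{u,x\}$ is a bijection between the two fibers, $M_B = (u\,v)M_A$ since $u$ and $v$ each appear in exactly one of the two exchanged edges and $x \neq u,v,v_i$, and therefore $\mathrm{dist}_{\Upsilon_n}(M_A,M_B) \leq 1$. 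This gives the one-sided McDiarmid/Azuma bound $\exp(-2t^2/n)$ (you correctly use the interval-length-one version of Hoeffding's lemma for the martingale differences rather than the weaker symmetric $[-1,1]$ version), and the anchor trick $a \le \mathbb{P}[f=0] \le \exp(-2(\mathbb{E}f)^2/n)$ correctly pins $\mathbb{E}f \le h_0$, after which the upper tail with $t = h - h_0$ gives the claim. No gaps.
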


\noindent We are now in a position to sketch a proof of the key lemma.

\subsection*{Proof Sketch II}
Our starting point is the fact shown in Section~\ref{sec:pseudo}, that for sufficiently large $n$ the eigenvalues $\{\eta_\lambda\}_{\lambda \vdash n}$ of $\widetilde{A}(\Gamma_t)$ satisfy
\[ | \eta_\mu | = o(|\zeta|) \quad \text{ for all non-fat }\mu \vdash n.\] 
With this in hand, we use the stability version of the ratio bound to show the characteristic function $f$ of any large $t$-intersecting family $\mathcal{F}$ is close in Euclidean distance to $U_t$. 

We then use the fact that $f$ is close to $U_t$ along with Proposition~\ref{prop:iso} to exhibit two perfect matchings $m_1 \in \mathcal{F}$ and $m_0 \notin \mathcal{F}$ that are structurally quite similar, i.e., sharing many edges, yet their orthogonal projections $[P_{U_t}f](m_1) \text{ and } [P_{U_t}f](m_0)$ onto the subspace $U_t$ are close to 1 and 0 respectively.  The projector $P_{U_t}$ is the sum of primitive idempotents $E_\lambda$ of the perfect matching association scheme such that $\lambda$ is fat:
\[  P_{U_t} = E_{(n)} + E_{(n-1,1)} + \cdots + E_{(n-t,1^t)}.\]
This projector is quite difficult to work with in practice due to the fact that the character theory of $\mathcal{M}_{2n}$ is considerably less understood than the classical character theory of $S_n$. For example, no Murnaghan-Nakayama-type rule or Jacobi-Trudi-type determinantal identity is known for expressing the zonal characters, which are outstanding open questions in the theory of zonal and Jack symmetric functions~\cite{Stanley89}. 
In light of this, we must use some barebone combinatorial and representation-theoretical arguments to find crude estimates of these projections, which we cover in the next section.  

Once we have boiled these projections down to their combinatorial essence, we use the fact that $m_1$ and $m_0$ share many edges to prove that $\mathcal{F}$ has a large restriction with respect to some set of $t$ disjoint edges $T$, however, not large enough to deduce the key lemma. Following a bootstrapping argument of Ellis~\cite{Ellis11}, we use our bounds on $t$-cross-intersecting families to show that almost every member of $\mathcal{F}$ has an edge in common with $T$. This fact, after an induction on $t$, leads to a proof of the key lemma. 

The asymptotics of permutations and perfect matchings bear a strong resemblance, so there are points in the proof that follow from Ellis~\cite{Ellis11} \emph{mutatis mutandis}. We have adopted a notation that is consistent with Ellis' at these places.

\section{Polytabloids and the Characters of $\mathcal{M}_{2n}$}\label{sec:combchar}

Although the character theory of $\mathcal{M}_{2n}$ is determined by the zonal polynomials expressed in the power-sum basis, arriving at tractable combinatorial expressions for these coefficients is considerably more difficult than it is for Schur functions. Instead, we work with combinatorial formulas for these quantities that stem from the fact that spherical functions of the Gelfand pair $(S_{2n},S_2 \wr S_n)$ are the projections of characters of $S_{2n}$ onto the space $H_n$-invariant functions. For a more detailed account, see~\cite[Ch. 11]{CST}.

For any $2\lambda$-tableau $T$, let $\text{row}_T(i)$ be the index of the row of $T$ that the cell labeled $i$ belongs to, and let $\text{col}_T(i)$ be the index of the column of $T$ that the cell labeled $i$ belongs to.  
We say that a $2\lambda$-tabloid $\{T\}$ \emph{covers} $m \in \mathcal{M}_{2n}$ if $\text{row}_T(i) = \text{row}_T(m(i))$ for all $i \in [2n]$. A $2\lambda$-tableau $T$ is \emph{m-aligned} with respect to a perfect matching $m \in \mathcal{M}_{2n}$ if $\{T\}$ covers $m$ and for any $i \in [2n]$ we have $\{\text{col}_T(i),\text{col}_T(m(i))\} = \{2j-1,2j\}$ for some $j \in [n]$.

For example, the perfect matching
$$1~7|2~4|3~8|5~12|6~11|9~10|13~14 \quad \text{ is not covered by } \quad  \ytableausetup{mathmode,boxsize=1.2em,tabloids = true}
\begin{ytableau}
1 & 2 & 3 & 4 & 5 & 6 & 11 & 12 \\
7 & 8 & 9 & 10\\ 
13 & 14  \\ 
\end{ytableau}~,$$ 
but the perfect matching $1~12|2~3|4~5|6~11|7~9|8~10|13~14$ is covered by this tabloid, as illustrated below:
\[ 
\begin{tikzpicture}
    \matrix (m) [
                matrix of math nodes, 
                nodes in empty cells,
                minimum width=width("8"),
            ] {
                1 & 2 & 3 & 4 & 5 & 6 & 11 & 12 \\
                7 & 8 & 9 & 10 \\
                13 & 14 \\
            };
            \draw (m-1-1.center) to [bend left = 20]  (m-1-8.center);
            \draw (m-1-2.center) --  (m-1-3.center);
             \draw (m-1-4.center) -- (m-1-5.center);
            \draw (m-1-6.center) -- (m-1-7.center);

             \draw (m-2-1.center) to [bend left = 40]  (m-2-3.center);
             \draw (m-2-2.center) to [bend right = 40]  (m-2-4.center);

             \draw (m-3-1.center) -- (m-3-2.center);          
\end{tikzpicture}.
\]
Also note that the tableau shown above is a $m^*$-aligned.  
Let
$$ \mathcal{P} := \frac{1}{|H_n|} \sum_{h \in H_n} e_h$$
denote the projection from $\mathbb{R}[S_{2n}/H_n]$ to its bi-$H_n$-invariant subalgebra $\mathbb{R}[H_n \backslash S_{2n} / H_n]$.  
Let $1_{\{T\}} \in \mathbb{R}[\mathcal{M}_{2n}]$ be the characteristic function of the set of perfect matchings that are covered by $\{T\}$, that is, 
\[
1_{\{T\}}(m) = 
\begin{cases}
1 &\text{ if } \{T\} \text{ covers }m,\\
0 &\text{ otherwise}
\end{cases}.
\]
for all $m \in \mathcal{M}_{2n}$. For any $\lambda \vdash n$, define the map
$$\mathcal{I}_\lambda' : \{ e_{\{T\}} \in \mathbb{R}[S_{2n}] : \{T\} \text{ is a } 2\lambda\text{-tabloid}\} \rightarrow \mathbb{R}[\mathcal{M}_{2n}]  \text{~~~such that~~~} \mathcal{I}_\lambda'(e_{\{T\}}) = 1_{\{T\}}.$$
Recall from Section~\ref{sec:sym} that
$$\text{Span}\{ e_{\{T\}} : T \text{ is a } 2\lambda\text{-tabloid}\} \cong \mathbb{R}[\mathbb{T}_{2\lambda}].$$
Let $\mathcal{I}_\lambda$ be the linear extension of $\mathcal{I}_\lambda'$, that is,
$$\mathcal{I}_\lambda : \mathbb{R}[\mathbb{T}_{2\lambda}] \rightarrow \mathbb{R}[\mathcal{M}_{2n}].$$
\noindent For any $2\lambda$-tabloid $\{T\} \in \mathbb{T}_{2\lambda}$ and $\sigma \in S_{2n}$, we have that $m \in \mathcal{M}_{2n}$ is covered by $\{T\}$ if and only if $\sigma m$ is covered by $\sigma\{T\}$; therefore, $\mathcal{I}_\lambda$ \emph{intertwines} the permutation representations $\mathbb{R}[\mathbb{T}_{2\lambda}]$ and $\mathbb{R}[\mathcal{M}_{2n}]$, in symbols:
$$ \mathcal{I}_\lambda(\sigma e_{\{T\}}) = \sigma \mathcal{I}_\lambda(e_{\{T\}}) \quad \text{ for all } \sigma \in S_{2n}.$$ 
By Schur's lemma, this implies for each irreducible $2\lambda$ that $\mathcal{I}_\lambda$ acts either as an isomorphism or as the zero map, and it is simple to show that the latter is not the case.
In particular, this shows for any standard Young tableau $T$ of shape $2\lambda$ and corresponding polytabloid $e_T$ that 
$$f_T := \mathcal{I}_\lambda e_T =  \sum_{\sigma \in C_T} \text{sign}(\sigma) \mathcal{I}_\lambda e_{\{\sigma T\}} = \sum_{\sigma \in C_T} \text{sign}(\sigma) 1_{\{\sigma T\}}$$ 
lies in the $2\lambda$ irreducible subspace of $\mathbb{R}[\mathcal{M}_{2n}]$, and moreover, that
$$ \{ f_T \in \mathbb{R}[\mathcal{M}_{2n}] : T \text{ is a standard Young tableau of shape }2\lambda  \}$$ 
is a basis for the $2\lambda$ irreducible subspace of $\mathbb{R}[\mathcal{M}_{2n}]$. 

For any $\lambda \vdash n$, the spherical function $\omega^\lambda \in \mathbb{R}[\mathcal{M}_{2n}]$ is the unique $H_n$-invariant function $\omega^\lambda \in \mathcal{I}_\lambda 2\lambda \leq \mathbb{R}[\mathcal{M}_{2n}]$ (equivalently, bi-$H_n$-invariant function of $2\lambda \leq \mathbb{R}[S_{2n}]$) which satisfies  $\omega^\lambda(m^*) = \omega^\lambda_{(1^n)} = 1$.  For any $\pi \in S_{2n}$ and $m = \pi m^*$, let $\omega^\lambda_{d(m,\cdot)}$ denote the zonal spherical function translated by $\pi$ so that $ \omega^\lambda_{d(m,\cdot)}(m) = \omega^\lambda_{(1^n)} = 1$.

For any tableau $T$ of even shape $2\lambda$, let $C'_T \leq C_T$ be the subgroup of the column-stabilizer of $T$ that stabilizes the odd-indexed columns of $T$ and acts trivially on the even-indexed columns.  More precisely, if we have $C_T \cong S_{(2\lambda')_1} \times S_{(2\lambda')_2} \times \cdots \times S_{(2\lambda')_{2\lambda_1}}$, then $C_T' \cong S_{(\lambda')_1} \times S_{(\lambda')_2} \times \cdots \times S_{(\lambda')_{\lambda_1}}$.  For any $m \in \mathcal{M}_{2n}$ and $m$-aligned $2\lambda$-tableau $T$, we have
\begin{align}
\omega^\lambda_{d(m,\cdot)} &= \frac{1}{\lambda'_1! \cdots \lambda'_{\lambda_1}!} \mathcal{P}\mathcal{I}_\lambda \sum_{\sigma \in C_T} \text{sign}(\sigma)  e_{\{\sigma T\}}\label{eq:P}\\
&= \frac{1}{\lambda'_1! \cdots \lambda'_{\lambda_1}! ~ |H|}\sum_{h \in H}\sum_{\sigma \in C_T} \text{sign}(\sigma) \mathcal{I}_\lambda e_{h\{\sigma T\}}\\
&= \frac{1}{|H|}\sum_{h \in H}\sum_{\sigma \in C_T'} \text{sign}(\sigma) 1_{\{h\sigma T\}}\label{eq:orb},
\end{align}
\noindent where $H$ is a translate of $H_n$~(see~\cite[Ch. 11]{CST}). The expression above together with the projection formula below gives us an explicit but admittedly complicated combinatorial formula for computing these projections.
\begin{lemma}\label{lem:fatproj}
\emph{\cite{Lindzey17}}
Let $E_{\mu} : \mathbb{R}[\mathcal{M}_{2n}] \rightarrow 2\mu$ denote the orthogonal projection onto $2\mu$.  For any $f \in \mathbb{R}[\mathcal{M}_{2n}]$, we have 
\[ [E_{\mu} f](m) = \frac{\dim {2\mu}}{(2n-1)!!} \sum_{\lambda \vdash n} \left( \sum_{m' : d(m,m') = \lambda} f(m')  \right) \omega^\mu_\lambda,\]
equivalently,
\[ [E_{\mu} f](m) = \frac{\dim {2\mu}}{(2n-1)!!} \sum_{m' \in \mathcal{M}_{2n}} f(m')~\omega^\mu_{d(m,m')}.\]
\end{lemma}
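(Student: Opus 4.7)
The two displayed identities are equivalent: grouping the inner sum by the sphere $\{m' : d(m,m') = \lambda\}$ turns the second formula into the first (and uses that $\omega^\mu$ is constant on spheres). So it suffices to establish the second form.

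The plan is to recognize the right-hand side as the action of an $S_{2n}$-intertwiner on $\mathbb{R}[\mathcal{M}_{2n}]$ and then invoke Schur's lemma together with the multiplicity-free decomposition from Theorem~\ref{thm:decomp}. Concretely, define $T_\mu : \mathbb{R}[\mathcal{M}_{2n}] \to \mathbb{R}[\mathcal{M}_{2n}]$ by
\[ [T_\mu f](m) := \sum_{m' \in \mathcal{M}_{2n}} f(m') \, \omega^\mu_{d(m,m')}. \]
Because the cycle-type distance $d(\cdot,\cdot)$ is $S_{2n}$-invariant, the kernel $K(m,m') := \omega^\mu_{d(m,m')}$ is a class function of the diagonal action of $S_{2n}$ on $\mathcal{M}_{2n} \times \mathcal{M}_{2n}$. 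A one-line calculation (substitute $m'' = \sigma^{-1}m'$ in $[T_\mu(\sigma f)](m)$) then shows $T_\mu$ commutes with the permutation action of $S_{2n}$ on $\mathbb{R}[\mathcal{M}_{2n}]$.

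The next step is to apply Schur's lemma. By Theorem~\ref{thm:decomp}, $\mathbb{R}[\mathcal{M}_{2n}] = \bigoplus_{\lambda \vdash n} V_\lambda$ with $V_\lambda \cong 2\lambda$ and all multiplicities equal to one. Since $T_\mu$ is $S_{2n}$-equivariant, Schur forces $T_\mu$ to act as a scalar $c_\lambda$ on each $V_\lambda$. To identify these scalars, I would evaluate $T_\mu$ on the spherical function $\omega^\lambda \in V_\lambda$ at the distinguished point $m^*$, which is the only $H_n$-invariant vector (up to scalar) in $V_\lambda$ normalized so that $\omega^\lambda(m^*) = 1$ and $\omega^\lambda_{d(m^*,m')} = \omega^\lambda(m')$. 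Then
\[ c_\lambda = c_\lambda \, \omega^\lambda(m^*) = [T_\mu \omega^\lambda](m^*) = \sum_{m'} \omega^\lambda(m') \, \omega^\mu(m') = \langle \omega^\lambda, \omega^\mu\rangle_{\mathcal{M}_{2n}}, \]
and Proposition~\ref{prop:orthogonal} gives $\langle \omega^\lambda, \omega^\mu\rangle_{\mathcal{M}_{2n}} = \delta_{\lambda,\mu} (2n-1)!!/\dim 2\mu$. Hence $T_\mu$ annihilates every $V_\lambda$ with $\lambda \neq \mu$ and acts on $V_\mu$ by the scalar $(2n-1)!!/\dim 2\mu$, i.e., $T_\mu = \frac{(2n-1)!!}{\dim 2\mu} E_\mu$, which rearranges to the desired identity.

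I do not anticipate a substantive obstacle: the argument is essentially the standard Gelfand-pair ``kernel = Schur-constant'' trick. The only point that deserves care is the bookkeeping around the translated spherical function $\omega^\mu_{d(m,\cdot)}$ versus $\omega^\mu \in \mathbb{R}[\mathcal{M}_{2n}]$, which amounts to checking that evaluating at $m = m^*$ recovers $\omega^\mu$ itself, a fact immediate from the definition given after equation (\ref{eq:orb}). Once that identification is spelled out, $S_{2n}$-equivariance and Proposition~\ref{prop:orthogonal} close the proof.
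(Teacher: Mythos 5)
Your proposal is correct, and it is the standard Gelfand-pair derivation of this identity. A small remark on framing: the paper does not actually include a proof of Lemma~\ref{lem:fatproj} --- it is cited from~\cite{Lindzey17} --- so there is no in-text argument to compare against. That said, everything you use is already set up in the paper: $T_\mu$ is the operator whose matrix is $\sum_{\lambda \vdash n}\omega^\mu_\lambda A_\lambda$ (a Bose--Mesner element), its $S_{2n}$-equivariance follows from $d(\sigma x,\sigma y)=d(x,y)$, the multiplicity-free decomposition of Theorem~\ref{thm:decomp} plus Schur's lemma makes it scalar on each $V_\lambda$, and the scalar is pinned down by evaluating on $\omega^\lambda$ at $m^*$ (using that $\omega^\mu_{d(m^*,m')}=\omega^\mu(m')$) and invoking the orthogonality relations of Proposition~\ref{prop:orthogonal}. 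Since the $\omega^\mu$ are real here, your omission of the complex conjugate in the inner product is harmless. The only thing I would make explicit is that $\omega^\lambda$ really lies in $V_\lambda$, which the paper records in Section~\ref{sec:combchar} (``$\omega^\lambda \in \mathcal{I}_\lambda 2\lambda$ ... normalized so that $\omega^\lambda(m^*)=1$''); once that is cited, the argument is complete.
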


\noindent Without further ado, we begin the proof of the key lemma.

\section{Proof of the Key Lemma}\label{sec:stab}
Let $c \in (0,1)$ and let $\mathcal{F}$ be a $t$-intersecting family of perfect matchings such that 
$$|\mathcal{F}| \geq c(2(n-t)-1)!!.$$
Recall that our goal is to show there exists a set of $t$ disjoint edges $T \subseteq E(K_{2n})$ such that $|\mathcal{F} \setminus  \mathcal{F}\!\downarrow_{T} | = O((2(n-t-1)-1)!!)$ for sufficiently large $n$ depending on $c$ and $t$.\\

\noindent Let $f$ be the characteristic function of $\mathcal{F}$, $\alpha = |\mathcal{F}|/(2n-1)!! \geq c/(\!(2n-1)\!)_t$, and $D$ be the Euclidean distance from $f$ to $U$. Setting $S = \mathcal{F}$ and applying our pseudo-adjacency matrix $\widetilde{A}(\Gamma_t)$ to Theorem~\ref{thm:stableratio} gives us
\begin{align*}
D^2 \leq \alpha \frac{(1-\alpha)|\zeta| - \alpha}{|\zeta| - |\mu|} &= \alpha \frac{(1-\alpha) - \alpha/|\zeta|}{1 - |\mu|/|\zeta|}\\
&\leq \alpha \frac{(1-\alpha) - c}{1 - O(1/\sqrt{n})}\quad \quad (\text{Theorem~\ref{thm:nonfat}})\\
&\leq \alpha \frac{1 - c}{1 - O(1/\sqrt{n})} \\
&\leq \delta(1 + O(1/\sqrt{n})) \frac{|\mathcal{F}|}{(2n-1)!!}
\end{align*}
where $\delta := 1-c$. We have 
$$\| P_{U_t^\perp} f \|_2^2 = \| f - P_{U_t}f\|_2^2 = D^2 \leq  \delta(1 + O(1/\sqrt{n})) \frac{|\mathcal{F}|}{(2n-1)!!},$$
which tends to zero as $n \rightarrow \infty$. This already shows that $f$ is close to $U_t$, but we now seek a combinatorial explanation for this proximity.

By Lemma~\ref{lem:fatproj}, we may write
$$P_m := [P_{U_t} f](m)  =  \sum_{\mu~\text{fat}} \frac{\dim {2\mu} }{(2n-1)!!} \sum_{m' \in \mathcal{M}_{2n}} f(m')~\omega^\mu_{d(m,m')}.$$
Now we have
\[ D^2 = \frac{1}{(2n-1)!!} \left( \sum_{m \in \mathcal{F}} (1-P_m)^2 + \sum_{m \notin \mathcal{F}} P_m^2 \right) \leq \delta(1 + O(1/\sqrt{n})) \frac{|\mathcal{F}|}{(2n-1)!!}, \]
which gives us,
\begin{align}\label{eq:projs}
\sum_{m \in \mathcal{F}} (1-P_m)^2 + \sum_{m \notin \mathcal{F}} P_m^2  \leq \delta(1 + O(1/\sqrt{n}))|\mathcal{F}|.
\end{align}
Following Ellis, pick $C > 0$ large enough so that 
\[\sum_{m \in \mathcal{F}} (1-P_m)^2 + \sum_{m \notin \mathcal{F}} P_m^2  \leq \delta(1 + O(1/\sqrt{n}))|\mathcal{F}| \leq |\mathcal{F}|(1-1/\sqrt{n}) \delta(1 + C/\sqrt{n}).\]
By the nonnegativity of each term on the left-hand side of (\ref{eq:projs}), at least $|\mathcal{F}|/\sqrt{n}$ members of $\mathcal{F}$ satisfy $(1 - P_m)^2 < \delta(1 + C/\sqrt{n})$; therefore, there exists a set
\[\mathcal{F}_1 := \{m \in \mathcal{F} : (1-P_m)^2 < \delta (1+C/\sqrt{n})\}\]
such that $|\mathcal{F}_1| \geq |\mathcal{F}|/\sqrt{n}$. The inequality~(\ref{eq:projs}) also implies that $P^2_m < 2\delta/n$ for every $m \notin \mathcal{F}$ with the exception of at most $n|\mathcal{F}|(1+O(1/\sqrt{n}))/2$ non-members, thus there is a set
$$\mathcal{F}_0 := \{ m \not \in \mathcal{F} : P_m^2 < 2\delta/n\} $$
such that
$$|\mathcal{F}_0| \geq (2n-1)!! - c(2(n-t)-1)!! - cn(2(n-t)-1)!!(1+O(1/\sqrt{n})) /2.$$
Observe that the projections of the elements of $\mathcal{F}_0$ and $\mathcal{F}_1$ are close to 0 and 1 respectively. \\

\noindent We now show there exist $m_1 \in \mathcal{F}_1$ and $m_0 \in \mathcal{F}_0$ that are close together in the graph $\Upsilon$, differing by $O(\sqrt{n \log n}$) partner swaps, which implies that $m_1$ and $m_0$ share many edges.

In particular, we claim there is a path $m_1p_2p_3 \cdots p_{\ell-1}m_0 $ in $\Upsilon$ of length at most $2\sqrt{(t+2) \frac{n}{2} \ln(2n)}$. Take $a = 1/(2n)^{t+2}$ and $h = 2h_0$ in Proposition~\ref{prop:iso}.  Since 
$$|\mathcal{F}_1| \geq c(2(n-t)-1)!!/n \geq (2n-1)!!/(2n)^{t+2}$$ 
for sufficiently large $n$, Proposition~\ref{prop:iso} gives us
$$ |N_h(\mathcal{F}_1)| \geq \left(1 - \frac{1}{n^{t+2}} \right)(2n-1)!!.$$
Since $|\mathcal{F}_0| > (2n-1)!!/(2n)^{t+2}$ for sufficiently large $n$, we have $|\mathcal{F}_0 \cap N_h(\mathcal{F}_1)| \neq \emptyset$, thus there is a path from $m_1$ to $m_0$ in $\Upsilon$ of length no more than $2\sqrt{(t+2) \frac{n}{2} \ln(2n)} = O(\sqrt{n \log n})$.\\

\noindent The inequality~(\ref{eq:projs}) and the foregoing shows that
\[ 1 - \sqrt{\delta(1 + C/\sqrt{n})} < P_{m_1} \text{ and } P_{m_0} < \sqrt{2\delta/\sqrt{n}}.\]
Combining these inequalities reveals that
$$P_{m_1} - P_{m_0} \geq (1- \sqrt{\delta} - O(1/\sqrt[4]{n})).$$
Rewriting using Lemma~\ref{lem:fatproj} gives us
$$\sum_{\mu~\text{fat}} \frac{\dim {2\mu}}{(2n-1)!!} \left( \sum_{m \in \mathcal{F}} \omega^\mu_{d(m_1,m)} -  \sum_{m \in \mathcal{F}} \omega^\mu_{d(m_0, m)} \right) \geq (1- \sqrt{\delta} - O(1/\sqrt[4]{n})).$$
By averaging, there exists a fat $\mu \neq (n)$ such that
$$ \frac{\dim {2\mu}}{(2n-1)!!} \left( \sum_{m \in \mathcal{F}} \omega^\mu_{d(m_1,m)} -  \sum_{m \in \mathcal{F}} \omega^\mu_{d(m_0, m)} \right) \geq \frac{(1- \sqrt{\delta} - O(1/\sqrt[4]{n}))}{F_t}.$$
Rearranging gives us
$$  \sum_{m \in \mathcal{F}} \omega^\mu_{d(m_1,m)} -  \sum_{m \in \mathcal{F}} \omega^\mu_{d(m_0, m)} \geq \frac{(1- \sqrt{\delta} - O(1/\sqrt[4]{n})) (2n-1)!!}{F_t~\dim 2\mu}.$$
Without loss of generality, we may assume that $m_1 = m^*$ and $m_0 = \pi m^*$ such that $\pi \in S_{2n}$ is a product of $O(\sqrt{n \log n})$ transpositions. By (\ref{eq:P}) and interchanging summations, we have
$$ \sum_{\sigma \in C_T'} \sum_{m \in \mathcal{F}} \text{sign}(\sigma) \left(\mathcal{P}1_{\{\sigma T\}}(m) - \mathcal{P}1_{\pi\{ \sigma T\}}(m) \right) \geq \frac{(1- \sqrt{\delta} - O(1/\sqrt[4]{n}))(2n-1)!!}{F_t~\dim 2\mu}$$
where $T$ is a $2\mu$-tableau that is $m^*$-aligned. By averaging, there exists a $\sigma \in C_T'$ such that
$$\text{sign}(\sigma)   \left( \sum_{m \in \mathcal{F}} \mathcal{P}1_{\{\sigma T\}}(m) -  \sum_{m \in \mathcal{F}}\mathcal{P}1_{\pi\{ \sigma T\}}(m) \right) \geq \frac{(1- \sqrt{\delta} - O(1/\sqrt[4]{n}))(2n-1)!!}{~F_t~|C_T'|~\dim 2\mu}.$$
Without loss of generality, we may assume
$$ \left( \sum_{m \in \mathcal{F}} \mathcal{P}1_{\{\sigma T\}}(m) -  \sum_{m \in \mathcal{F}}\mathcal{P}1_{\pi \{ \sigma T\}}(m)\right) \geq \frac{(1- \sqrt{\delta} - O(1/\sqrt[4]{n}))(2n-1)!!}{F_t~|C_T'|~\dim 2\mu}.$$
By (\ref{eq:orb}), we have
$$   \sum_{h \in H_n} \sum_{m \in \mathcal{F}} \left({1}_{\{h \sigma T\}} - {1}_{\pi \{ h \sigma T\}}\right)(m) \geq \frac{(1- \sqrt{\delta} - O(1/\sqrt[4]{n}))(2n-1)!!~|H_n |}{F_t~|C_T'|~\dim 2\mu}.$$
Note that if $\{h \sigma T\} = \pi \{h \sigma T\}$, then ${1}_{\{h\sigma T\}}(m) - {1}_{\pi \{h\sigma T\}}(m) = 0$.  Let 
$$I := \{ i \in V(K_{2n}) :  \pi(i) \neq i\}$$ 
be the set of vertices moved by $\pi$, and for any tabloid $\{T\}$, let $\{\overline{T}\}$ be the sub-tabloid obtained by deleting the first row of $\{T\}$. After canceling some terms, we have
$$ \sum_{\substack{h \in H_n \\  \exists i \in I : i \in \{\overline{h \sigma T}\}}} \!\! \sum_{m \in \mathcal{F}}  \left({1}_{\{h\sigma T\}} - {1}_{ \pi\{  h \sigma T\}}\right)(m) \geq \frac{(1- \sqrt{\delta} - O(1/\sqrt[4]{n}))(2n-1)!!~|H_n |}{F_t~|C_T'|~\dim 2\mu}.$$
Because $m_1$ and $m_0$ differ by only $O( \sqrt{n \log n})$ partner swaps, we have $|I| = o(n)$.   The number of permutations $h \in H_n$ that send a vertex $i \in I$ to a row of $\{\overline{\sigma T}\}$ is $o(|H_n|)$. Since there are $o(|H_n |)$ terms in the outer summation, by averaging, there is a tabloid $\{h\sigma T\} =: \{S\}$ such that
$$  \sum_{m \in \mathcal{F}}  {1}_{\{S\}}(m) - {1}_{ \pi \{S \}}(m)  \geq \frac{(1- \sqrt{\delta} - O(1/\sqrt[4]{n}))(2n-1)!!~\omega(1)}{ F_t~|C_T'|~\dim 2\mu  }.$$
After absorbing constants depending on $c$ and $t$ and dropping negative terms, we have
$$  \sum_{m \in \mathcal{F}} {1}_{\{S\}}(m)  \geq \frac{(2n-1)!!~\omega(1)}{\dim 2\mu  }.$$
Henceforth, we absorb constant factors on the right-hand side into $\omega(1)$. By the pigeonhole principle, there are $s := n - \mu_1 \leq t$ disjoint edges $S'$ covered by $\{ \overline{S} \}$ such that
$$  | \mathcal{F} \!\downarrow_{S'} \! |  \geq \frac{(2n-1)!!~\omega(1)}{ \dim 2\mu  }.$$
Similarly, there are $t-s$ disjoint edges $S''$ disjoint from $S'$ such that
$$  | \mathcal{F} \!\downarrow_{S' \cup S''} \!|   \geq \frac{(2n-1)!!~\omega(1)}{\dim 2\mu  ~ (2n)_{2(t-s)} }.$$
By Theorem~\ref{thm:dimbound}, we have
$$  | \mathcal{F} \!\downarrow_{S' \cup S''} \!|   \geq \omega((2(n-2t)-1)!!).$$
By relabeling the vertices of $K_{2n}$, we may assume without loss of generality that
$$  | \mathcal{F} \!\downarrow_{T^*} \!|   \geq \omega((2(n-2t)-1)!!).$$
Let $\mathcal{B}$ be the collection of all partitions of $[2t]$ into two parts $A = \{a_1,\cdots,a_t\}$ and $B = \{b_1,\cdots,b_t\}$. Crudely, the set of members of $\mathcal{F}$ with no edge in $T^*$ can be written as
\begin{align}\label{eq:terms}
\mathcal{F} \setminus \bigcup_{i=1}^t \mathcal{F} \!\downarrow_{\{2i-1,2i\}}~=  \bigcup_{\substack{ (A,B) \in \mathcal{B}  \\ a_{i_1}v_{i_1}, \cdots, a_{i_t}v_{i_t}~:~a_{i_j} \neq a_{i_k},~v_{i_j} \neq v_{i_k} \forall j,k \in [t] \\ v_{i_j} \notin A,~a_{i_j}v_{i_j} \notin T^*~\forall j \in [t] } } \mathcal{F} \!\downarrow_{\{a_{i_1}v_{i_1}, \cdots, a_{i_t}v_{i_t}\}}.
\end{align}
By Lemma~\ref{lem:smallcross}, we have
\begin{align}\label{eq:cross}
|\mathcal{F} \!\downarrow_{T^*}\!| \cdot |  \mathcal{F} \!\downarrow_{\{a_{i_1}v_{i_1}, \cdots, a_{i_t}v_{i_t}\}}\!| \leq ((2(n-2t)-1)!!)^2
\end{align} 
for each term on the right-hand side of (\ref{eq:terms}). Since $| \mathcal{F} \!\downarrow_{T^*}\!\! | \geq \omega((2(n-2t)-1)!!)$, the bound (\ref{eq:cross}) implies that 
$$|\mathcal{F} \!\downarrow_{\{a_{i_1}v_{i_1}, \cdots, a_{i_t}v_{i_t}\}}\!| = o((2(n-2t)-1)!!)$$ for each term on the right-hand side of (\ref{eq:terms}).  Since the right-hand side of (\ref{eq:terms}) has $O((2n)_t)$ terms, we have that 
\begin{align*}
\left| \mathcal{F} \setminus \bigcup_{i=1}^t \mathcal{F} \!\downarrow_{\{2i-1,2i\}} \right|~&\leq o((2(n-2t)-1)!!)O((2n)_{t})\\
&= o((2(n-t)-1)!!).
\end{align*}
Recalling that $|\mathcal{F}| \geq c(2(n-t)-1)!!$, by averaging, there is an edge $ij \in T^*$ such that
\[   |\mathcal{F} \!\downarrow_{ij}| \geq (c - o(1))(2(n-t)-1)!!/t.   \]
The set of all members of $\mathcal{F}$ that do not contain the edge $ij$ can be written as
\begin{align}\label{eq:terms2}
\mathcal{F} \setminus \mathcal{F} \!\downarrow_{ij}~=  \bigcup_{ k \neq j } \mathcal{F} \!\downarrow_{ik}.
\end{align}
By Lemma~\ref{lem:1cross}, we have $|\mathcal{F} \!\downarrow_{ij}| \cdot | \mathcal{F} \!\downarrow_{ik}| \leq ((2(n-t-1)-1)!!)^2$ for each term on the right-hand side of (\ref{eq:terms2}).  Since $|\mathcal{F} \!\downarrow_{ij}| \geq \Omega((2(n-t)-1)!!)$, we deduce that $|\mathcal{F} \!\downarrow_{ik}| \leq O((2(n-t-2)-1)!!)$, which gives us
 
\[ |\mathcal{F} \setminus \mathcal{F}_{ij}| = \sum_{k \neq j} |\mathcal{F} \!\downarrow_{ik}| \leq O((2(n-t-1)-1)!!).   \]

\noindent At this point we have shown that any large $t$-intersecting family $\mathcal{F}$ is almost contained within a canonically intersecting family $\mathcal{F}_{ij}$. This may seem problematic, after all, the key lemma states that any large $t$-intersecting family is almost contained within a canonically $t$-intersecting family $\mathcal{F}_T$; however, we are in the homestretch, as a simple induction on $t$ following Ellis~\cite{Ellis11} will take us the rest of the way.\\

If $t = 1$, then we are done, so let us assume that the key lemma is true for $t-1$. 
Let $\mathcal{F} \subseteq \mathcal{M}_{2n}$ be a $t$-intersecting
family of size at least $c(2(n - t)-1)!!$. We have shown there exists an edge $ij$ such that 
$$|\mathcal{F} \setminus \mathcal{F} \!\downarrow_{ij}| \leq O((2(n - t-1) - 1)!!),$$ 
which implies that
$$| \mathcal{F} \!\downarrow_{ij}| \geq  |\mathcal{F}| - O((2(n - t-1) - 1)!!).$$ 
By removing the vertices $i$ and $j$ from each member of $\mathcal{F} \!\downarrow_{ij}$, we obtain
a $(t - 1)$-intersecting family $\mathcal{F}' \subseteq \mathcal{M}_{2(n-1)}$ of perfect matchings of $K_{2n} \setminus \{i,j\}$ such that
$$ |\mathcal{F}'| \geq (c - O(1/n))(2(n - t)-1)!!.$$
For any $c' \in (0,c)$ we have 
$$|\mathcal{F}'| \geq c'(2(n - t)-1)!!$$ 
provided that $n$ is sufficiently large.  Now by the induction hypothesis, there exists a canonically $(t - 1)$-intersecting family $\mathcal{F}'_{T'}$ of perfect matchings of $K_{2n} \setminus \{i,j\}$ such that 
$$|\mathcal{F}' \setminus \mathcal{F}'_{T'}| \leq O((2(n-t-1)-1)!!).$$ 
Setting $T = T' \cup \{ij\}$, if we add the edge $ij$ to each member of $\mathcal{F}'_{T'}$, then we obtain the canonically $t$-intersecting family $\mathcal{F}_T$ of perfect matchings of $K_{2n}$.  This implies that
$$|\mathcal{F} \setminus \mathcal{F}_T| \leq O((2(n-t-1)-1)!!),$$
as desired. This finishes the proof of the key lemma and thus the proof of our main result.

\section{Concluding Remarks}\label{sec:odd}

It is natural to conjecture that similar results hold for near-perfect matchings of $K_{2n-1}$.  Without much extra effort, we can give an analogue of the first part of our main result for near-perfect matchings.

Let $\mathcal{M}_{2n-1}$ denote the collection of \emph{near-perfect matchings} of $K_{2n-1}$, equivalently, maximum matchings of $K_{2n-1}$. We may identify them with the cosets of the quotient $S_{2n-1}/H_{n-1}$. Let  $\mathcal{O}(2n-1)$ denote the irreducibles of $S_{2n-1}$ that have precisely one odd part.
The theorem below follows immediately from Pieri's rule and Theorem~\ref{thm:decomp}. 
\begin{theorem}\label{thm:decomp_odd} The space of real-valued functions over near-perfect matchings $K_{2n-1}$ admits the following decomposition into irreducibles of $S_{2n-1}$:
\[1 \uparrow^{S_{2n-1}}_{H_{n-1}} \cong \mathbb{R}[\mathcal{M}_{2n-1}] \cong  \bigoplus_{\lambda \in \mathcal{O}(2n-1)} \lambda.\]
\end{theorem}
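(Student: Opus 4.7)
The plan is to route through $S_{2n-2}$, using Theorem~\ref{thm:decomp} at the intermediate level and then invoking Pieri's rule to lift to $S_{2n-1}$.

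First, $S_{2n-1}$ acts transitively on $\mathcal{M}_{2n-1}$; the stabilizer of any fixed near-perfect matching must fix the unique unmatched vertex and permute the remaining $2n-2$ vertices in a way that preserves the $n-1$ matched pairs, so it is isomorphic to $H_{n-1}$. This gives the identification $\mathbb{R}[\mathcal{M}_{2n-1}] \cong 1 \uparrow^{S_{2n-1}}_{H_{n-1}}$ exactly as in the even case.

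Next, because $H_{n-1}$ fixes the unmatched vertex, there is a subgroup chain $H_{n-1} \leq S_{2n-2} \leq S_{2n-1}$, where $S_{2n-2}$ is the copy of the symmetric group on the matched vertices. Transitivity of induction together with Theorem~\ref{thm:decomp} applied with $n$ replaced by $n-1$ then gives
\[
1 \uparrow^{S_{2n-1}}_{H_{n-1}} \;\cong\; \bigl(1 \uparrow^{S_{2n-2}}_{H_{n-1}}\bigr)\!\uparrow^{S_{2n-1}}_{S_{2n-2}} \;\cong\; \bigoplus_{\mu \vdash n-1}\, (2\mu)\!\uparrow^{S_{2n-1}}_{S_{2n-2}}.
\]
By Pieri's rule in its single-cell specialization (the branching rule for the symmetric group), $(2\mu)\!\uparrow^{S_{2n-1}}_{S_{2n-2}}$ decomposes as the multiplicity-free sum of those $\nu \vdash 2n-1$ obtained from $2\mu$ by adding one cell.

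Finally, I would verify that this double sum reproduces each $\lambda \in \mathcal{O}(2n-1)$ with multiplicity one. Since every part of $2\mu$ is even, adding a single cell yields a partition with exactly one odd part, so every $\nu$ appearing lies in $\mathcal{O}(2n-1)$. Conversely, given $\lambda \in \mathcal{O}(2n-1)$ with unique odd part $\lambda_j$, uniqueness forces $\lambda_{j+1} < \lambda_j$, so the cell at position $(j, \lambda_j)$ is a removable corner whose deletion yields a valid all-even partition $2\mu$ with $|\mu| = n-1$; this is the only removable cell whose deletion produces an all-even partition. Hence $\mu \mapsto \lambda$ is a bijection and the multiplicity of each $\lambda \in \mathcal{O}(2n-1)$ in the decomposition is exactly one, which is the theorem. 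The only real bookkeeping is this final parity argument, and it poses no genuine obstacle.
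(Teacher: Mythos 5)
Your proof is correct and uses the same ingredients the paper cites (Theorem~\ref{thm:decomp} together with Pieri's rule in its single-cell/branching form), differing only in that you route through $S_{2n-2}$ by transitivity of induction rather than restricting $\mathbb{R}[\mathcal{M}_{2n}]$ from $S_{2n}$ to $S_{2n-1}$; the two are mirror-image applications of the branching rule and yield the same multiplicity-free sum. The final parity bookkeeping — that adding a cell to an all-even partition gives exactly one odd part, and that removing the unique odd part's corner (valid since $\lambda_{j+1}<\lambda_j$ by uniqueness) is the only way back to an all-even shape — is exactly the check needed to identify the summands with $\mathcal{O}(2n-1)$ with multiplicity one.
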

\noindent This implies that the permutation representation of $S_{2n-1}$ acting on $\mathcal{M}_{2n-1}$ is multiplicity-free, so we have that $(S_{2n-1}, H_{n-1})$ is a symmetric Gelfand pair. We define the corresponding symmetric association scheme below. 

For each $\lambda \in \mathcal{O}(2n-1)$, the $\lambda$-associate $A_{\lambda}$ is the following matrix
\[
    (A_\lambda)_{i,j} = 
\begin{cases}
    1,& \text{if } d'(i,j) = \lambda\\
    0,              & \text{otherwise}
\end{cases}
\]
where $i,j \in \mathcal{M}_{2n-1}$ and $d'$ is a cycle type function defined as follows. Recall that the multiset union of two near-perfect matchings $m,m'$ is a collection of even cycles and precisely one path of even length.  We may represent this multiset union again as a partition $d'(m,m') = \lambda \vdash (2n-1)$ such that $\lambda$ has precisely one odd part. The odd part, say $\lambda_i$, represents the unique even path of length $\lambda_i - 1$. The collection of matrices $\{A_\lambda : \lambda \in \mathcal{O}(2n-1) \}$ forms the \emph{near-perfect matching association scheme}.

Let $\Gamma_t'$ be the near-perfect matching variant of the $t$-derangement graph, that is, $m,m' \in E(\Gamma_t')$ if $d'(m,m')$ has less than $t$ parts of size 2.
Let $\Theta_{t}$ be the subgraph of $\Gamma_t'$ whose adjacency matrix is the following sum of associates of the near-perfect matching association scheme
$$\Theta_{t} = \sum_{\lambda} A_\lambda $$
where $\lambda$ ranges over all partitions of $\mathcal{O}(2n-1)$ that have less than $t$ parts of size less than or equal to $2$.
\begin{proposition}\label{prop:isomorphic}
$\Theta_{t} \cong \Gamma_t$
\end{proposition}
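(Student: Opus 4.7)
The plan is to construct an explicit bijection $\phi : \mathcal{M}_{2n-1} \to \mathcal{M}_{2n}$ and verify directly that it carries the edge set of $\Theta_t$ onto the edge set of $\Gamma_t$. Given $m \in \mathcal{M}_{2n-1}$ with unique unmatched vertex $v(m) \in [2n-1]$, set
\[ \phi(m) := m \cup \bigl\{\{v(m),\, 2n\}\bigr\}, \]
which is a bijection between two sets of equal cardinality $(2n-1)!!$. It then suffices to compare the cycle-type partitions $\lambda := d'(m,m') \vdash 2n-1$ and $\mu := d\bigl(\phi(m),\phi(m')\bigr) \vdash n$ for arbitrary pairs and to translate the defining edge conditions of the two graphs.

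The second step is a short combinatorial analysis of the underlying multigraphs. The multigraph $\Gamma(m,m')$ is a disjoint union of even cycles together with a single even-length path joining the missed vertices $v(m)$ and $v(m')$. Adjoining the new vertex $2n$ and the edges $\{v(m),2n\}$, $\{v(m'),2n\}$ closes this path into a cycle through $2n$ of length exactly two greater than the original path when $v(m) \neq v(m')$, and produces a doubled-edge $2$-cycle through $2n$ when $v(m) = v(m')$; all other cycles are untouched. Unwinding the conventions defining $d'$ and $d$, this yields the uniform transformation rule
\[ \mu_j = \lambda_j/2 \text{ for each even part } \lambda_j, \qquad \mu_{j_0} = (\lambda_{j_0}+1)/2 \text{ for the unique odd part } \lambda_{j_0}. \]

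The conclusion is then immediate: $\mu_j = 1$ holds if and only if $\lambda_j \in \{1,2\}$, so the number of parts of $\mu$ equal to $1$ equals the number of parts of $\lambda$ of size at most $2$. This identifies the defining edge condition of $\Gamma_t$ with that of $\Theta_t$ under $\phi$ and gives $(m,m') \in E(\Theta_t) \iff \bigl(\phi(m),\phi(m')\bigr) \in E(\Gamma_t)$, completing the graph isomorphism.

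The only delicate point, and thus the main (mild) obstacle, is the degenerate case $v(m) = v(m')$: there the odd part of $\lambda$ equals $1$ (a "path" on a single vertex with no edges), and the two new edges adjoined at $2n$ collapse into a single doubled edge; one must confirm that this still produces $\mu_{j_0} = 1$ so that the transformation rule above applies uniformly without a separate case split. Once this subcase is verified, the isomorphism follows with no further calculation.
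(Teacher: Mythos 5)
Your proposal is correct and uses the same bijection as the paper (adding the edge from the missed vertex to the new vertex $2n$); the paper's proof simply asserts the edge correspondence while you supply the cycle-type bookkeeping $\mu_j = \lambda_j/2$, $\mu_{j_0} = (\lambda_{j_0}+1)/2$ and the translation ``parts of $\mu$ equal to $1$'' $\leftrightarrow$ ``parts of $\lambda$ at most $2$'' that justifies it, including the degenerate $v(m)=v(m')$ case.
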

\begin{proof}
Identify the vertices of $K_{2n-1}$ and $K_{2n}$ with the sets $[2n-1]$ and $[2n]$ respectively. There is a natural map $\psi : \mathcal{M}_{2n-1} \rightarrow \mathcal{M}_{2n}$ defined such that $\psi(m') = m$ where $m \in \mathcal{M}_{2n}$ is the unique perfect matching that matches the vertex $2n \in V(K_{2n})$ with the unique unmatched vertex of $m'$.  This map is a bijection such that 
$$m_1',m_2' \in E(\Theta_t) \quad \text{ if and only if } \quad \psi(m_1'),\psi(m_2') \in E(\Gamma_t)$$
for each pair $m_1',m_2' \in \mathcal{M}_{2n-1}$, which gives the desired isomorphism.
\end{proof}
\noindent Since $\Theta_t$ is a subgraph of $\Gamma_t'$, the canonically $t$-intersecting families of $\mathcal{M}_{2n-1}$, which have size $(2(n-t)-1)!!$, are also independent sets of $\Theta_t$. Proposition~\ref{prop:isomorphic} along with the results of Section~\ref{sec:pseudo} give us the following.
\begin{theorem}
Let $t \in \mathbb{N}$. If $\mathcal{F} \subseteq \mathcal{M}_{2n-1}$ is $t$-intersecting, then
$$|\mathcal{F}| \leq (2(n-t)-1)!!$$
for sufficiently large $n$ depending on $t$.
\end{theorem}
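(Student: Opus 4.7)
The plan is to reduce the near-perfect matching bound to the perfect matching bound already established in the first part of Theorem~\ref{thm:tekr}, using the graph isomorphism $\psi: \mathcal{M}_{2n-1} \to \mathcal{M}_{2n}$ of Proposition~\ref{prop:isomorphic}. First I would identify $t$-intersecting families of $\mathcal{M}_{2n-1}$ with independent sets of the full near-perfect matching $t$-derangement graph $\Gamma_t'$. Under the cycle-type partition $d'(m,m')$, shared edges of $m,m'$ correspond to length-$2$ cycles in the multiset union, i.e., parts of size exactly $2$ in $\lambda \vdash 2n-1$. Hence $\mathcal{F}$ is $t$-intersecting if and only if $d'(m,m')$ has at least $t$ parts of size $2$ for every pair in $\mathcal{F}$, which is precisely the condition that $\mathcal{F}$ be independent in $\Gamma_t'$.

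Next I would verify that $\Theta_t$ is a subgraph of $\Gamma_t'$ directly from the definitions: fewer than $t$ parts of size $\leq 2$ in $d'(m,m')$ certainly implies fewer than $t$ parts of size exactly $2$, so $E(\Theta_t) \subseteq E(\Gamma_t')$. Any independent set of $\Gamma_t'$ is therefore also an independent set of $\Theta_t$, which bounds the maximum independent-set size of $\Gamma_t'$ above by that of $\Theta_t$. Invoking Proposition~\ref{prop:isomorphic} then identifies the maximum independent-set size of $\Theta_t$ with the maximum independent-set size of the perfect matching $t$-derangement graph $\Gamma_t$ on $\mathcal{M}_{2n}$.

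Finally I would apply the upper-bound half of Theorem~\ref{thm:tekr}, which says that for $n$ sufficiently large depending on $t$ the independence number of $\Gamma_t$ is at most $(2(n-t)-1)!!$. Chaining the three observations yields $|\mathcal{F}| \leq (2(n-t)-1)!!$ for every $t$-intersecting $\mathcal{F} \subseteq \mathcal{M}_{2n-1}$ in the same range of $n$. No substantive obstacle arises, as every needed ingredient is already developed; the point of the reduction is that it sidesteps rebuilding the Gelfand-pair, zonal-character, and pseudo-adjacency machinery of Sections~\ref{sec:assoc}--\ref{sec:pseudo} directly for the pair $(S_{2n-1}, H_{n-1})$ and the near-perfect matching scheme $\{A_\lambda\}_{\lambda \in \mathcal{O}(2n-1)}$.
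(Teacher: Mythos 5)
Your proposal is correct and follows the same reduction the paper uses: a $t$-intersecting family in $\mathcal{M}_{2n-1}$ is independent in $\Gamma_t'$, hence independent in the subgraph $\Theta_t \cong \Gamma_t$ (Proposition~\ref{prop:isomorphic}), so the upper bound from Theorem~\ref{thm:tekr} transfers directly. The paper's own exposition just compresses the chain $\alpha(\Gamma_t') \leq \alpha(\Theta_t) = \alpha(\Gamma_t) \leq (2(n-t)-1)!!$ into a remark invoking the proposition together with Section~\ref{sec:pseudo}.
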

\begin{theorem}
Let $t \in \mathbb{N}$. If $\mathcal{F},\mathcal{G} \subseteq \mathcal{M}_{2n-1}$ are cross $t$-intersecting, then
$$|\mathcal{F}|\cdot|\mathcal{G}| \leq ((2(n-t)-1)!!)^2$$
for sufficiently large $n$ depending on $t$.
\end{theorem}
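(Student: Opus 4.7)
The plan is to reduce this cross-intersecting statement for near-perfect matchings to the perfect matching version (Theorem~\ref{thm:cross}) via the bijection $\psi : \mathcal{M}_{2n-1} \to \mathcal{M}_{2n}$ used in the proof of Proposition~\ref{prop:isomorphic}, which sends a near-perfect matching $m$ to the perfect matching $\psi(m)$ obtained by adding the edge between the new vertex $2n$ and the unique unmatched vertex of $m$. Since $|\mathcal{M}_{2n-1}| = (2n-1)(2n-3)!! = (2n-1)!! = |\mathcal{M}_{2n}|$, the map $\psi$ is a bijection, so cardinalities transfer exactly.

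First I would verify that $\psi(\mathcal{F}), \psi(\mathcal{G}) \subseteq \mathcal{M}_{2n}$ are $t$-cross-intersecting. For any $m \in \mathcal{F}$ and $m' \in \mathcal{G}$, the edge set of $\psi(m) \cap \psi(m')$ contains $m \cap m'$ together with possibly the extra edge $\{2n, v\}$ in the case that $m$ and $m'$ share their unmatched vertex $v$, so
\[ |\psi(m) \cap \psi(m')| \geq |m \cap m'| \geq t. \]
Hence the cross $t$-intersecting property is preserved under $\psi$.

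Next I would apply Theorem~\ref{thm:cross} to the pair $\psi(\mathcal{F}), \psi(\mathcal{G}) \subseteq \mathcal{M}_{2n}$. For $n$ sufficiently large depending on $t$, this yields
\[ |\psi(\mathcal{F})| \cdot |\psi(\mathcal{G})| \leq ((2(n-t) - 1)!!)^2. \]
Bijectivity of $\psi$ gives $|\psi(\mathcal{F})| = |\mathcal{F}|$ and $|\psi(\mathcal{G})| = |\mathcal{G}|$, which completes the proof.

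There is no real obstacle here; all of the spectral work has already been packaged inside Theorem~\ref{thm:cross}, and the bijection $\psi$ handles the translation between the two settings. An equivalent intrinsic route would be to use Proposition~\ref{prop:isomorphic} to transfer the pseudo-adjacency matrix $\widetilde{A}(\Gamma_t)$ constructed in Section~\ref{sec:pseudo} to a pseudo-adjacency matrix of $\Theta_t$ with an identical spectrum, and then apply the cross-ratio bound to $\Theta_t$ directly after observing that cross $t$-intersecting families in $\mathcal{M}_{2n-1}$ are cross-non-adjacent in $\Gamma_t'$ and hence also in its subgraph $\Theta_t$. Both routes encode the same underlying calculation.
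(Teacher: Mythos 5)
Your proposal is correct and matches the paper's intended argument: the paper derives this result from Proposition~\ref{prop:isomorphic} together with the spectral machinery of Section~\ref{sec:pseudo}, which is exactly the transfer via the bijection $\psi$ (equivalently, via $\Theta_t \cong \Gamma_t$) that you carry out. Your direct route of mapping $\mathcal{F},\mathcal{G}$ into $\mathcal{M}_{2n}$ and invoking Theorem~\ref{thm:cross} is the same computation stated a bit more explicitly, and the key observation $|\psi(m)\cap\psi(m')| \geq |m\cap m'|$ is indeed what makes it go through.
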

A similar characterization of the extremal $t$-intersecting families probably holds for near-perfect matchings, but we have not worked out these details.

Combinatorial methods of Ellis~\cite{Ellis11} in all likelihood can be used to turn the stability results of the previous section into stronger Hilton-Milner-type results that characterize the largest $t$-intersecting families of perfect matchings that are not contained in any canonically $t$-intersecting family. We have worked out such a characterization for the $t=1$ case in an unpublished note; but the proof is virtually identical to Ellis'~\cite{Ellis11}.

We have made no attempt to find a concrete $n_0(t) \in \mathbb{N}$ such that Theorem~\ref{thm:tekr} holds for all $n \geq n_0(t)$.  This is due to the fact that the $n_0(t)$ that one would obtain by walking through our proof would be quite far from the $n_0(t) = 3t/2 + 1$ conjectured by Godsil and Meagher.  We have also assumed that $t$ is independent of $n$ in several places, so it seems that a different strategy is needed to fully resolve Godsil and Meagher's conjecture. 

Finally, we believe there are other symmetric association schemes that admit orderings of its eigenspaces and associates such that the ``low eigenspaces" support the characteristic vectors of maximum independent sets in the union of its ``top associates". Indeed, many of the classic algebraic proofs of $t$-intersecting Erd\H{o}s-Ko-Rado results at a high level stem the fact that an association scheme is $P$-polynomial and $Q$-polynomial with orderings of the associates and eigenspaces such that
\[ \left(\sum_{i=0}^{f(t)} E_{i} \right)1_S = 1_S \]
for any maximum independent set $S$ of the graph $\sum_{i=0}^{f(t)-1} A_{m-i}$ (see~\cite{GodsilMeagher}).  
Note that both the perfect matching scheme and the conjugacy class association scheme of $S_n$ are not $P$-polynomial or $Q$-polynomial, yet there still exists such an ordering of their associates and eigenspaces, and this plays an essential role in the $t$-intersecting Erd\H{o}s-Ko-Rado-type bounds in~\cite{EllisFP11} and the present work. Investigating generalizations of the $P$-polynomial and $Q$-polynomial property may help understand when general association schemes have the right structure for showing $t$-intersecting Erd\H{o}s-Ko-Rado-type bounds.  
\bibliographystyle{plain}
\bibliography{../master}

\begin{thebibliography}{10}

\bibitem{BannaiI84}
E.~Bannai and T.~Ito.
\newblock {\em Algebraic combinatorics I: association schemes}.
\newblock Mathematics lecture note series. Benjamin/Cummings Pub. Co., 1984.

\bibitem{Bauer07}
F.~L. Bauer.
\newblock A remark on {S}tirling's formula and on approximations for the double
  factorial.
\newblock {\em The Mathematical Intelligencer}, 29(2):10 -- 14, 2007.

\bibitem{CST}
T.~Ceccherini-Silberstein, F.~Scarabotti, and F.~Tolli.
\newblock {\em Harmonic Analysis on Finite Groups: Representation Theory,
  Gelfand Pairs and Markov Chains}.
\newblock Cambridge Studies in Advanced Mathematics. Cambridge University
  Press, 2008.

\bibitem{Diaconis88}
Persi Diaconis.
\newblock {\em Group representations in probability and statistics}.
\newblock Institute of Mathematical Statistics Lecture Notes---Monograph
  Series, 11. Institute of Mathematical Statistics, Hayward, CA, 1988.

\bibitem{EllisFP11}
D.~Ellis, E.~Friedgut, and H.~Pilpel.
\newblock Intersecting families of permutations.
\newblock {\em J. Amer. Math. Soc.}, 24:649--682, 2011.

\bibitem{Ellis11}
David Ellis.
\newblock {S}tability for $t$-intersecting families of permutations.
\newblock {\em Journal of Combinatorial Theory, Series A}, 118(1):208 -- 227,
  2011.

\bibitem{GodsilMeagher}
C.~Godsil and K.~Meagher.
\newblock {\em Erdos-Ko-Rado Theorems: Algebraic Approaches}.
\newblock Cambridge Studies in Advanced Mathematics. Cambridge University
  Press, 2015.

\bibitem{GodsilM15}
Chris Godsil and Karen Meagher.
\newblock An algebraic proof of the {E}rd{\"o}s-{K}o-{R}ado theorem for
  intersecting families of perfect matchings.
\newblock {\em ARS MATHEMATICA CONTEMPORANEA}, 12(2):205--217, 2016.

\bibitem{KuW17}
Cheng~Yeaw Ku and Kok~Bin Wong.
\newblock Eigenvalues of the matching derangement graph.
\newblock {\em Journal of Algebraic Combinatorics}, Dec 2017.

\bibitem{Lindzey18}
N.~{Lindzey}.
\newblock {Stability for intersecting families of perfect matchings
  (submitted)}.
\newblock {\em {A}r{X}iv e-prints}, August 2018.

\bibitem{Lindzey17}
Nathan Lindzey.
\newblock Erd{\"o}s-{K}o-{R}ado for perfect matchings.
\newblock {\em European Journal of Combinatorics}, 65:130 -- 142, 2017.

\bibitem{MacDonald95}
I.G. Macdonald.
\newblock {\em Symmetric functions and Hall polynomials}.
\newblock Oxford mathematical monographs. Clarendon Press, 1995.

\bibitem{MeagherM05}
Karen Meagher and Lucia Moura.
\newblock Erd{\"o}s-{K}o-{R}ado theorems for uniform set-partition systems.
\newblock {\em Electr. J. Comb.}, 12(1):Research Paper 40, 12 pp. (electronic),
  2005.

\bibitem{Serre96}
L.L. Scott and J.P. Serre.
\newblock {\em Linear Representations of Finite Groups}.
\newblock Graduate Texts in Mathematics. Springer New York, 1996.

\bibitem{Stanley89}
Richard~P Stanley.
\newblock Some combinatorial properties of {J}ack symmetric functions.
\newblock {\em Advances in Mathematics}, 77(1):76 -- 115, 1989.

\bibitem{StanleyV201}
R.P. Stanley.
\newblock {\em Enumerative Combinatorics:}, volume~2 of {\em Cambridge Studies
  in Advanced Mathematics}.
\newblock Cambridge University Press, 2001.

\bibitem{Thrall42}
R.~M. Thrall.
\newblock On symmetrized {K}ronecker powers and the structure of the free {L}ie
  ring.
\newblock {\em American Journal of Mathematics}, 64(1):pp. 371--388, 1942.

\end{thebibliography}

\end{document}